\newcommand{\R}{{\mathbb{R}}}
\newcommand{\E}{{\mathbb{E}}}
\newcommand{\N}{{\mathbb{N}}}
\newcommand{\F}{{\mathcal{F}}} % for the sigma-algebra
\renewcommand{\P}{{\mathbf{P}}} % for the probability measure
\newcommand{\B}{{\mathcal{B}}}
\newcommand{\diff}[1]{\,\mathrm{d}#1}
\newcommand{\one}{\mathbb{I}}
\newcommand{\id}{\mathrm{id}}
\newcommand{\ee}{\mathrm{e}}
\theoremstyle{plain}
\newtheorem{definition}{Definition}[section]
\newtheorem{theorem}[definition]{Theorem}
\newtheorem{lemma}[definition]{Lemma}
\newtheorem{corollary}[definition]{Corollary}
\newtheorem{prop}[definition]{Proposition}
\newtheorem{assumption}[definition]{Assumption}
\theoremstyle{definition}
\newtheorem{remark}[definition]{Remark}
\newtheorem{example}[definition]{Example}
\begin{document}

\title[Stoch. C-Stab. and B-cons. of Euler-type
schemes] 
{Stochastic C-stability and B-consistency of explicit and implicit Euler-type
schemes }

\author[W.-J.~Beyn]{Wolf-J\"urgen Beyn}
\address{Wolf-J\"urgen Beyn\\
Fakult\"at f\"ur Mathematik\\
Universit\"at Bielefeld\\
Postfach 100 131\\
DE-33501 Bielefeld\\
Germany}
\email{beyn@math.uni-bielefeld.de}

\author[E.~Isaak]{Elena Isaak}
\address{Elena Isaak\\
Fakult\"at f\"ur Mathematik\\
Universit\"at Bielefeld\\
Postfach 100 131\\
DE-33501 Bielefeld\\
Germany}
\email{eisaak@math.uni-bielefeld.de}

\author[R.~Kruse]{Raphael Kruse}
\address{Raphael Kruse\\
Technische Universit\"at Berlin\\
Institut f\"ur Mathematik, Secr. MA 5-3\\
Stra\ss e des 17.~Juni 136\\
DE-10623 Berlin\\
Germany}
\email{kruse@math.tu-berlin.de}

\keywords{stochastic differential equations, global monotonicity condition,
split-step backward Euler, projected Euler-Maruyama, strong convergence rates,
C-stability, B-consistency}
\subjclass[2010]{65C30, 65L20} % to be checked

\begin{abstract}
  This paper is concerned with the numerical approximation of stochastic
  ordinary differential equations, which satisfy a global monotonicity
  condition. This condition includes several equations with super-linearly
  growing drift and diffusion coefficient functions such as
  the stochastic Ginzburg-Landau equation and the 3/2-volatility model from
  mathematical finance. Our analysis of the mean-square error of convergence
  is based on a suitable generalization of the notions of C-stability and
  B-consistency known from deterministic numerical analysis for stiff ordinary
  differential equations. An important feature of our stability concept is that
  it does not rely on the availability of higher moment bounds of the 
  numerical one-step scheme.  
  
  While the convergence theorem is derived in a somewhat more abstract
  framework, this paper also contains two more concrete examples of
  stochastically C-stable numerical one-step schemes: the split-step backward
  Euler method from Higham et al.\ (2002) and a newly proposed explicit variant
  of the Euler-Maruyama scheme, the so called projected Euler-Maruyama method.
  For both methods the optimal rate of strong convergence is proven
  theoretically and verified in a series of numerical experiments.
\end{abstract}

\maketitle
%\tableofcontents

\section{Introduction}
\label{sec:intro}
Initiated by the papers \cite{higham2002b} and \cite{hu1996}
the field of numerical analysis for stochastic ordinary differential equations
(SODEs) with super-linearly growing coefficient functions has seen a 
considerable progress, especially over the last couple of years. For instance,
we refer to \cite{ hutzenthaler2014a, hutzenthaler2014c, hutzenthaler2012,
mao2013a, sabanis2013b, tretyakov2013} and the references therein.

The starting point of this article is the following observation: There exist
strongly convergent numerical schemes, whose one-step maps
satisfy suitable Lipschitz-type conditions, although the
underlying stochastic differential equation has non-globally Lipschitz
continuous coefficient functions. For
the numerical approximation of stiff deterministic ODEs this observation has
been formalized in the notion of \emph{C-stability}, see for example
\cite[Definition 2.1.3]{dekker1984} and \cite[Chap.~8.4]{strehmel2012}. A
related result is also found in \cite[Prop.~15.2]{hairer1996}.

In this paper we present a generalization of this notion to the stochastic
situation. Together with its counterpart, the notion of \emph{B-consistency},
we will show that the error analysis of stochastically C-stable
numerical methods can be simplified significantly compared to existing
approaches in the literature. In particular, it turns out
that it is not necessary to study higher moment estimates of the numerical
scheme nor to consider their continuous time extensions.

We apply this more abstract framework to study the strong error of convergence
for the numerical discretization of SODEs under the
\emph{global monotonicity condition} (see \eqref{eq3:onesided}). This condition
includes several examples of SODEs with superlinearly growing drift and
diffusion coefficient functions, for which the explicit Euler-Maruyama method
is known to be divergent, see \cite{hutzenthaler2011}. However, several
explicit and implicit variants of the Euler-Maruyama method have been developed
and analyzed in recent papers on this topic.
For instance, we
refer to \cite{mao2013a} for the strong error analysis of the backward Euler
method, and to \cite{sabanis2013b, tretyakov2013} for a corresponding result of
the explicit tamed Euler method. Further, in \cite{hutzenthaler2014c} strong
convergence rates are derived for a stopped-tamed Euler-Maruyama method applied
to SODEs which lie beyond the global monotonicity condition.

In this paper we work with the following notion of strong convergence:
We say that a numerical scheme converges strongly with order
$\gamma$ to the exact solution $X \colon [0,T] \times \Omega \to \R^d$
if there exists a constant $C$ independent of the temporal step size $h$ such
that 
\begin{align}
  \label{eq0:strerr}
  \max_{n \in \{1,\ldots,N\} } \| X(t_n) - X_h(t_n) \|_{L^2(\Omega;\R^d)} \le C
  |h|^\gamma.
\end{align}
Here, $X_h \colon \{t_0, t_1, \ldots,t_N\} \times \Omega
\to \R^d$ denotes the grid function generated by the numerical scheme. Let us
remark that several of the above mentioned papers consider stronger
notions of strong convergence, where, for example, the maximum occurs 
inside the $L^2$-norm or the norm in $L^p(\Omega;\R^d)$ with $p > 2$ is
considered instead of the $L^2$-norm. Our choice of \eqref{eq0:strerr} is
explained by the fact that our proof of the stability lemma (see
Lemma~\ref{lem:stab}), which plays a central role in our approach, relies on
the orthogonality of the conditional expectation with respect to the norm in
$L^2$. 

In order to demonstrate the usefulness of our abstract results we present
two more concrete examples of stochastically C-stable numerical schemes:
First we are concerned with the \emph{split-step backward Euler method} (SSBE)
from \cite{higham2002b}, which is shown to be strongly convergent of order
$\gamma = \frac{1}{2}$ in Theorem~\ref{cor:SSBEconv}. Second, we propose
a new explicit scheme, the \emph{projected Euler-Maruyama method} (PEM),
which turns out to be, in general, computationally less expensive then the
implicit SSBE scheme. In Theorem~\ref{th:PEMconv} we verify that the PEM method
is also strongly convergent of order $\frac{1}{2}$.

In our numerical experiments in Section~\ref{sec:exp} both methods perform
equally well in terms of the experimental strong errors, which therefore
indicates to favour the explicit PEM method due to its simpler implementation.
However, this only holds true in the non-stiff case. As for
deterministic ODEs, stiff problems may require an impractical small step size
for an explicit numerical method while implicit schemes already give more
useful results for larger step sizes, therefore reducing the overall
computational cost. This is relevant if, for instance, the numerical one-step 
method is used for the time integration of a parabolic stochastic partial
differential equation. Although we apply techniques from the numerical analysis
of stiff equations in our error analyis we leave this issue to future research
and concentrate here on non-stiff problems. In this context we also refer to
\cite{hutzenthaler2012} for a detailed comparison between implicit numerical
methods and a further purely explicit variant of the Euler-Maruyama method, the
tamed Euler method, which is considered in several of the above mentioned
papers. 

Let us briefly highlight two results in the literature, which are
closely related to our approach from a methodological point of view: In
\cite{wang2012} the authors investigate a family of one-leg theta methods for
the discretization of SODEs under a one-sided Lipschitz condition on the drift
and a global Lipschitz bound on the diffusion coefficient function. Hereby,
they make use of the related notion of B-convergence. The second paper
\cite{tretyakov2013} presents a fundamental mean square convergence theorem for
the discretization of SODEs under the global monotonicity condition. This
theorem imposes a similar concept of the local truncation error as our notion
of B-consistency. However, in the proof of the theorem the authors relate the
global error at time $t_i$ to the error at time $t_{i-1}$ by one time step of
the exact solution. Proceeding in this way one cannot benefit from the global
Lipschitz properties of the numerical method.

The remainder of this paper is organized as follows: The following section
contains a detailed description of the class of stochastic ordinary differential
equations, whose solutions we want to approximate. Further, we state our main
assumptions and present the numerical schemes, which are analyzed in the
subsequent sections. In Section~\ref{sec:def} we develop 
our notions of stochastic C-stability and B-consistency in a somewhat more
abstract framework. Then we prove the already mentioned stability lemma, from
which we easily deduce our strong convergence theorem for C-stable numerical
methods. 

In Section~\ref{sec:nonlinear} we briefly summarize some results on the
solvability of nonlinear equations, which are needed for the error analysis of
the SSBE method. In Sections~\ref{sec:SSBE} and \ref{sec:PEM} we verify
that the split-step backward
Euler scheme and the projected Euler-Maruyama method are stochastically
C-stable and B-consistent, and, hence, strongly convergent. 
In Section~\ref{sec:exp} we present some numerical experiments which illustrate
our theoretical results for the discretization of the stochastic
Ginzburg-Landau equation and for the financial $3/2$-volatility model.

\section{Problem description and the numerical methods}
\label{sec:prob}
In this section we introduce the class of stochastic differential equations,
which we aim to discretize. Further, we state our main assumptions and
the numerical methods, which we study in the remainder of this paper.

Let $d,m \in \N$, $T \in (0,\infty)$, and $(\Omega, \F, (\F_t)_{t \in [0,T]},
\P)$ be a filtered probability space satisfying the usual conditions. We
consider the solution $X \colon [0,T] \times \Omega \to \R^d$ to
the SODE
\begin{align}
  \label{sode}
  \begin{split}
    \diff{X(t)} &= f(t,X(t)) \diff{t} + \sum_{r=1}^m g^r(t,X(t))
    \diff{W^r(t)},\quad t \in [0,T],  \\
    X(0)&=X_0.
  \end{split}
\end{align}
Here $f\colon [0,T] \times \R^d \to \R^d$ stands for the drift coefficient
function, while $g^r \colon [0,T]\times \R^d \to \R^d$, $r=1,\ldots,m$,
are the diffusion coefficient functions. By $W^r \colon [0,T] \times
\Omega \to \R$, $r = 1,\ldots,m$, we denote an independent family of
real-valued standard $(\F_t)_{t\in [0,T]}$-Brownian motions on
$(\Omega,\mathcal{F},\P)$. For a sufficiently large $p \in [2,\infty)$ the
initial condition $X_0$ is assumed to be an element of the space
$L^p(\Omega,\F_0,\P;\R^d)$.

By $\langle \cdot, \cdot \rangle$ and $|\cdot|$ we denote the Euclidean inner
product and the Euclidean norm on $\R^d$, respectively. Throughout this paper
we impose the following conditions on the drift and the diffusion coefficient
functions. Note that the strong convergence result for the SSBE method in
Theorem~\ref{cor:SSBEconv} requires a more restrictive lower bound for the
parameter $\eta$ appearing in \eqref{eq3:onesided}.

\begin{assumption}
  \label{as:fg}
  The mappings $f \colon [0,T] \times \R^d \to \R^d$ and
  $g^r \colon [0,T] \times \R^d \to \R^d$, $r = 1,\ldots,m$, are
  continuous. Furthermore, there exist a positive constant $L$ and a parameter
  value $\eta \in (\frac{1}{2},\infty)$ with
  \begin{align}
    \label{eq3:onesided}
    \big\langle f(t,x_1) - f(t,x_2), x_1-x_2 \big\rangle + \eta \sum_{r =
    1}^m  \big| g^r(t,x_1) - g^r(t,x_2) \big|^2 &\le L | x_1 - x_2 |^2
  \end{align}
  for all $t \in [0,T]$ and $x_1,x_2 \in \R^d$.
  In addition, there exists a constant $q \in (1,\infty)$ such that for every
  $r = 1,\ldots,m$ it holds
  \begin{align}
    |f(t,x) | \vee | g^r(t,x)| &\le L \big( 1 + |x |^q \big),
    \label{eq3:poly_growth}\\
    | f(t_1,x) - f(t_2,x) | \vee | g^r(t_1,x) - g^r(t_2,x) | &\le L \big( 1 +
    |x|^q \big)  |t_1 - t_2|^{\frac{1}{2}},\label{eq3:loc_Lip_t}\\
    | f(t,x_1) - f(t,x_2) |\vee | g^r(t,x_1) - g^r(t,x_2) | &\le L \big( 1 +
    |x_1|^{q-1} + |x_2 |^{q-1} \big)  | x_1 - x_2 |,
    \label{eq3:loc_Lip}
  \end{align}
  for all $t,t_1,t_2 \in [0,T]$ and $x,x_1,x_2 \in \R^d$.
\end{assumption}

The assumption \eqref{eq3:onesided} is called \emph{global monotonicity
condition}. We exclude the case $q = 1$, since this coincides with the
well-known global Lipschitz case studied in \cite{kloeden1999,milstein1995}. In
Section~\ref{sec:exp} we present two more concrete SODEs, which fulfill 
Assumption~\ref{as:fg}.

Before we describe the numerical schemes we remark that Assumption~\ref{as:fg}
is also sufficient to ensure the existence of a unique solution to
\eqref{sode}, see \cite{krylov1999}, \cite[Chap.~2.3]{mao1997} or
\cite[Chap.~3]{roeckner2007}. By this we understand 
an almost surely continuous and $(\F_t)_{t \in [0,T]}$-adapted stochastic
process $X \colon [0,T] \times \Omega \to \R^d$ which satisfies $\P$-almost
surely the integral equation
\begin{align}
  \label{exact}
  X(t) = X_0 + \int_{0}^{t} f(s,X(s)) \diff{s} + \sum_{r = 1}^m \int_{0}^t
  g^r(s,X(s)) \diff{W^r(s)}
\end{align}
for all $t \in [0,T]$. In addition, if there exist $C \in (0,\infty)$ and $p
\in [2, \infty)$ such that
\begin{align}
  \label{eq:growthcond}
  \big\langle f(t,x), x \big\rangle + \frac{p-1}{2} \sum_{r =
  1}^m  \big| g^r(t,x) \big|^2 &\le C \big(1 + | x |^2 \big)
\end{align}
for all $x \in \R^d$, $t \in [0,T]$, then the exact solution has a finite $p$-th
moment, that is
\begin{align}
  \label{eq:moments}
  \sup_{t \in [0,T]} \big\| X(t) \big\|_{L^p(\Omega;\R^d)} < \infty.
\end{align}
For a proof we refer, for instance, to \cite[Chap.~2.4]{mao1997}. The condition
\eqref{eq:growthcond} is called \emph{global coercivity condition}.

For the formulation of the numerical methods we introduce the following
terminology: For $N \in \N$ we say that $h = (h_1,\ldots,h_{N}) \in (0,T]^{N}$
is a \emph{vector of (deterministic) step sizes} if $\sum_{i = 1}^N h_i = T$.
Every vector of step sizes $h$ gives rise to a set of temporal grid points
$\mathcal{T}_h$, which is given by
\begin{align*}
  \mathcal{T}_h := \Big\{ t_n := \sum_{i = 1}^n h_i \, : \, n = 0,\ldots,N
  \Big\}.
\end{align*}
For short we write $|h| := \max_{i \in \{ 1,\ldots,N\}} h_i$
for the \emph{maximal step size} in $h$. 

The aim of this paper is to show that the following two schemes are 
examples of stochastically C-stable numerical methods.

%In Section~\ref{sec:def} we write the
%numerical methods in abstract terms by
%\begin{align*}
%  X_h(t_i) = \Psi(X_h(t_{i-1}), t_{i-1},h_i), \quad 1 \le i \le N,
%\end{align*}
%where $\Psi$ denotes the one-step map of the scheme. Then, we call a numerical
%method stochastically C-stable if for some $\eta \in (1,\infty)$ there exists a
%constant $C_{\mathrm{stab}}$ such that
%\begin{align*}
%  \begin{split}
%    &\big\| \E \big[ \Psi(Y,t,\delta) -
%    \Psi(Z,t,\delta) | \F_{t} \big]
%    \big\|_{L^2(\Omega;\R^d)}^2\\
%    &\qquad + \eta \big\| \big( \id - \E [ \, \cdot \, |
%    \F_{t} ] \big) \big( \Psi(Y,t,\delta) -
%    \Psi(Z,t,\delta) \big) \big\|^2_{L^2(\Omega;\R^d)}\\
%    &\quad \le \big(1 + C_{\mathrm{stab}} \delta \big)
%    \big\| Y - Z \big\|_{L^2(\Omega;\R^d)}^2
%  \end{split}
%\end{align*}
%for all $t \in [0,T)$, $Z, Y \in L^p(\Omega, \F_t,\P;\R^d)$  and all
%sufficiently small step sizes $\delta > 0$. This definition generalizes the
%notion of C-stability from \cite[Definition 2.1.3]{dekker1984} and
%\cite[Chap.~8.4]{strehmel2012} to random variables. It enables us to apply the
%same arguments based on the discrete Gronwall Lemma as for the
%discretization of SODEs satisfying a global Lipschitz condition.

\begin{example}
  \label{ex:SSBE1}
  Consider the so called \emph{split-step backward Euler method}
  (SSBE) studied in \cite{higham2002b}. For its formulation
  let $h = (h_1,\ldots,h_N)$ be a vector of step sizes. Then the SSBE method is
  given by setting $X_h^{\mathrm{SSBE}}(0) = X_0$ and by the recursion
  \begin{align} 
    \label{SSBErecursion}
    \begin{split}
      \overline{X}_h^{\mathrm{SSBE}}(t_i) &= X_h^{\mathrm{SSBE}}(t_{i-1}) + h_i
      f(t_{i}, \overline{X}_h^{\mathrm{SSBE}}(t_i)),\\
      X_h^{\mathrm{SSBE}}(t_{i}) &= \overline{X}_h^{\mathrm{SSBE}}(t_i) +
      \sum_{r = 1}^m g^r(t_{i}, \overline{X}_{h}^{\mathrm{SSBE}}(t_i)) \big(
      W^r(t_{i}) - W^r(t_{i-1}) \big),
    \end{split}
  \end{align}
  for every $i = 1,\ldots,N$. It is shown in Section~\ref{sec:SSBE} that the
  SSBE scheme is a well-defined stochastic one-step method under
  Assumption~\ref{as:fg} and that it is strongly convergent of order $\gamma =
  \frac{1}{2}$, see Theorem \ref{cor:SSBEconv}.

  Let us note that we evaluate the diffusion coefficient functions $g^r$ at
  time $t_{i}$ in the $i$-th step of the SSBE method. This
  is somewhat unusual when compared to the definition
  of the backward Euler scheme in \cite[Chap.~12]{kloeden1999}, where 
  $g^r$ is evaluated at $t_{i-1}$ instead.
  The reason for this slight modification lies in condition
  \eqref{eq3:onesided}, which is applied to $f$ and $g^r$, $r =
  1,\ldots,m$, simultaneously at the same point $t$ in time. Compare also with
  the inequality \eqref{eq:stab} further below. It helps to avoid some
  technical issues if we already take this relationship into consideration in
  the definition of the numerical scheme.
\end{example}

\begin{example}
  \label{ex:PEM}   
  Another example of a stochastically C-stable scheme is the following
  explicit variant of the Euler-Maruyama method, which we term \emph{projected
  Euler-Maruyama method} (PEM). It consists of the standard Euler-Maruyama
  method and a projection onto a ball in $\R^d$ whose radius is expanding with
  a negative power of the step size.

  To be more precise, let $h \in (0,1]^N$ be an arbitrary vector of step sizes.
  The parameter value $\alpha \in (0,1]$ is chosen to be $\alpha =
  \frac{1}{2(q-1)}$ in dependence on the growth rate $q$ appearing in
  Assumption~\ref{as:fg}. Then, the PEM method is given by the recursion
  \begin{align} 
    \label{PEMrecursion}
    \begin{split}
      \overline{X}_h^{\mathrm{PEM}}(t_i) &:= \min\big( 1 , h_i^{-\alpha} \big|
      X_h^{\mathrm{PEM}}(t_{i-1}) \big|^{-1} \big)
      X_h^{\mathrm{PEM}}(t_{i-1}),\\
      X_h^{\mathrm{PEM}}(t_i) &:= \overline{X}_h^{\mathrm{PEM}}(t_{i})
      + h_i f(t_{i-1}, \overline{X}_h^{\mathrm{PEM}}(t_{i}) )\\
      &\quad + \sum_{r = 1}^m g^r(t_{i-1}, \overline{X}_h^{\mathrm{PEM}}(t_{i}))
      \big( W^r(t_{i}) - W^r(t_{i-1}) \big), \; \text{ for } 1 \le i \le N,
    \end{split}
  \end{align}
  where $X_h^{\mathrm{PEM}}(0) := X_0$. The strong error
  analysis of the PEM method is carried out in Section~\ref{sec:PEM}.

  To the best of our knowledge the PEM method for stochastic equations is new
  to the literature. Its definition is inspired by a truncation procedure, which
  plays an important role in the proof of \cite[Chap.~2, Theorem~3.4]{mao1997}.
  For deterministic ODEs projection methods appear in geometric integration,
  see \cite{grimm2005}. After the first preprint of this paper has appeared on
  \texttt{arxiv.org} the asymptotic stability and integrability property of a
  variant of the PEM method \eqref{PEMrecursion} is studied in 
  \cite{szpruch2015} using Lyapunov function techniques.
\end{example}

\section{An abstract convergence theorem}
\label{sec:def}
This section contains a detailed introduction to our notions of stochastic
C-stability and B-consistency in a somewhat more abstract framework. Then we
state our strong convergence theorem, whose proof turns out to be a 
direct application of the stability Lemma~\ref{lem:stab}.

We begin by introducing some additional notation. By $\overline{h} \in (0,T]$
we denote an \emph{upper step size bound} and we define the set $\mathbb{T} :=
\mathbb{T}(\overline{h}) \subset [0,T) \times (0,\overline{h}]$ to be   
\begin{align*}
  \mathbb{T}  := \big\{ (t,\delta) \in [0,T) \times
  (0,\overline{h}] \, : \, t+\delta \le T \big\}. 
\end{align*}
Further, for a given vector of step sizes $h \in (0,\overline{h}]^N$ we denote
by $\mathcal{G}^{2}(\mathcal{T}_h)$ the space of all adapted and square
integrable \emph{grid functions}, that is
\begin{align*}
  \mathcal{G}^{2}(\mathcal{T}_h) := \big\{ Z \colon \mathcal{T}_h \times
  \Omega \to \R^d\, : \, Z(t_n) \in L^2(\Omega,\F_{t_n},\P;\R^d)\text{ for all
  } n = 0,1,\ldots,N \big\}.
\end{align*}
Our abstract class
of stochastic one-step methods is defined as follows. 

\begin{definition}
  \label{def:onestep}
  Let $\overline{h} \in (0,T]$ be an upper step size bound and $\Psi \colon
  \R^d \times \mathbb{T} \times \Omega \to \R^d$ be a mapping satisfying the
  following measurability and integrability condition: For every 
  $(t,\delta) \in \mathbb{T}$ and $Z \in L^2(\Omega,\F_{t},\P;\R^d)$ it holds
  \begin{align}
    \label{eq:Psicond}
    \Psi(Z,t,\delta) \in L^2(\Omega,\F_{t+\delta},\P;\R^d).
  \end{align}
  Then, for every vector of step sizes $h = (h_1,\ldots,h_N) \in
  (0,\overline{h}]^N$, $N \in \N$, we say that 
  a grid function $X_h \in \mathcal{G}^2(\mathcal{T}_h)$ is generated by the
  \emph{stochastic one-step method} $(\Psi,\overline{h},\xi)$ with initial
  condition $\xi \in L^2(\Omega,\F_{0},\P;\R^d)$ if 
  \begin{align}
    \label{eq:onestep2}
    \begin{split}
      X_h(t_i) &= \Psi(X_h(t_{i-1}), t_{i-1}, h_i), \quad 1 \le i \le N,\\
      X_h(t_0) &= \xi.
    \end{split}
  \end{align}
 %  We call $\Psi$ the \emph{procedure function} of the stochastic one-step
%   method.
We call $\Psi$ the \emph{one-step map} of the method.
\end{definition}

Next, we present our definition of stability for stochastic one-step methods.
 It is a suitable generalization of the notion of
C-stability from \cite[Definition 2.1.3]{dekker1984} and has been used in the
context of numerical approximation of stiff differential equations. We also
refer to \cite[Prop.~15.2]{hairer1996} and to \cite[Chap.~8.4]{strehmel2012} for
a more recent exposition. 

\begin{definition}
  \label{def:cstab}
  A stochastic one-step method $(\Psi,\overline{h},\xi)$ is called
  \emph{stochastically C-stable} (with respect to the norm in
  $L^2(\Omega;\R^d)$) if there exist a constant $C_{\mathrm{stab}}$ and a
  parameter value $\eta \in (1,\infty)$ such that for all $(t,\delta) \in
  \mathbb{T}$ and all random variables $Y, Z \in L^2(\Omega,\F_{t},\P;\R^d)$
  it holds  
  \begin{align}
    \label{eq:stab_cond1}
    \begin{split}
      &\big\| \E \big[ \Psi(Y,t,\delta) -
      \Psi(Z,t,\delta) | \F_{t} \big]
      \big\|_{L^2(\Omega;\R^d)}^2\\
      &\qquad + \eta \big\| \big( \id - \E [ \, \cdot \, |
      \F_{t} ] \big) \big( \Psi(Y,t,\delta) -
      \Psi(Z,t,\delta) \big) \big\|^2_{L^2(\Omega;\R^d)}\\
      &\quad\le \big(1 + C_{\mathrm{stab}} \delta \big)
      \big\| Y - Z \big\|_{L^2(\Omega;\R^d)}^2.    
    \end{split}
  \end{align}
\end{definition}
Here and in what follows we denote by $\big(\id - \E [ \, \cdot \, |\F_{t}
]\big)Y = Y - \E[Y|\F_{t}]$ the projection of an $\R^d$-valued random variable
orthogonal to the conditional expectation $\E[Y| \F_{t}]$.
The next definition is concerned with the local truncation error. The
conditions \eqref{eq:cons_cond1} and \eqref{eq:cons_cond2} are well-known in
the literature and are found in slightly different form in
\cite[Th.~1.1]{milstein1995} and \cite[Th.~1.1]{milstein2004}. A related
concept has been applied in \cite{tretyakov2013}, but the authors need higher
moment estimates of the local truncation error. 

\begin{definition}
  \label{def:bcons}
  A stochastic one-step method $(\Psi,\overline{h},\xi)$ is called
  \emph{stochastically B-consistent} of order $\gamma > 0$ to \eqref{sode} if
  there exists a constant $C_{\mathrm{cons}}$  such that for every
  $(t,\delta) \in \mathbb{T}$ it holds
  \begin{align}
    \label{eq:cons_cond1}
    \begin{split}
      \big\| \E \big[ X(t+\delta) - \Psi(X(t),t,\delta) | \F_{t} \big] 
      \big\|_{L^2(\Omega;\R^d)} \le C_{\mathrm{cons}} \delta^{ \gamma + 1}
    \end{split}
  \end{align}
  and
  \begin{align}
    \label{eq:cons_cond2}
    \begin{split}
      \big\| \big( \id - \E [ \, \cdot \, | \F_{t} ] \big)
      \big( X(t + \delta) - \Psi(X(t),t,\delta) \big)
      \big\|_{L^2(\Omega;\R^d)} \le C_{\mathrm{cons}} \delta^{ \gamma +
      \frac{1}{2}},
    \end{split}
  \end{align}
  where $X \colon [0,T] \times \Omega \to \R^d$ denotes the exact solution to
  \eqref{sode}.
\end{definition}

Finally, it remains to give our definition of strong convergence.

\begin{definition}
  \label{def:conv}
  A stochastic one-step method $(\Psi,\overline{h},\xi)$
  \emph{converges} \emph{strongly} with order $\gamma > 0$ to the exact
  solution of \eqref{sode} if there exists a constant $C$ such that for every
  vector of step sizes $h \in (0,\overline{h}]^N$ it holds
  \begin{align*}
    \max_{n \in \{0,\ldots,N\}} \big\| X_h(t_n) - X(t_n)
    \big\|_{L^{2}(\Omega;\R^d)} \le C |h|^{\gamma}.
  \end{align*}
  Here $X$ denotes the exact solution to \eqref{sode} and $X_h \in
  \mathcal{G}^2(\mathcal{T}_h)$ is the grid function generated by
  $(\Psi,\overline{h},\xi)$ with step sizes $h \in (0,\overline{h}]^N$. 
\end{definition}

We first prove the following
useful stability lemma. It follows from the discrete Gronwall Lemma and gives a
motivation for the conditions \eqref{eq:stab_cond1} to \eqref{eq:cons_cond2}.
The underlying principle is similar to  the proof of
\cite[Th.~1.1]{milstein1995} and \cite[Th.~1.1]{milstein2004}, but differs in
one important point: In \cite[Th.~1.1]{milstein1995} the error at time $t_i$
is related to the error at time $t_{i-1}$ by one discrete time step of the
exact solution (compare with \cite[Lemma~1.1]{milstein1995}). Here we follow 
the same idea, but we propagate the error by one application of the one-step
map.
% but we transport the error with one time step of the procedure function.
 This turns out to be important since a stochastically C-stable
one-step method enjoys a global Lipschitz property.

\begin{lemma}
  \label{lem:stab}
  Let $(\Psi,\overline{h},\xi)$ be a stochastically C-stable one-step method
  with constants $C_\mathrm{stab}$ and $\eta \in (1, \infty)$. Let $h
  \in (0,\overline{h}]^N$ be an arbitrary vector of step sizes. For
  every grid function $Z \in \mathcal{G}^2(\mathcal{T}_h)$ it then
  follows that
  \begin{align*}
    &\max_{n \in \{0,\ldots,N\}} \| Z(t_n) - X_h(t_n)  \|_{L^2(\Omega;\R^d)}^2 
    \le \ee^{(1 + C_{\mathrm{stab}}(1 + \overline{h}))T} \Big( \| Z(0) -
    \xi  \|_{L^2(\Omega;\R^d)}^2 \\ 
    &\qquad + \sum_{i = 1}^{N} \big(1 + h_i^{-1}\big) \big\| \E \big[
    Z(t_i) - \Psi(Z(t_{i-1}), t_{i-1}, h_i) | \F_{t_{i-1}} \big]
    \big\|^2_{L^2(\Omega;\R^d)}\\ 
    &\qquad + C_\eta \sum_{i = 1}^{N}  \big\| \big(
    \id - \E \big[ \, \cdot \, 
    | \F_{t_{i-1}} \big] \big) \big( Z(t_i) -
    \Psi(Z(t_{i-1}), t_{i-1}, h_i) \big) \big\|^2_{L^2(\Omega;\R^d)}  \Big),
  \end{align*}
  where $C_\eta = 1 + (\eta - 1)^{-1} $ and $X_h \in
  \mathcal{G}^2(\mathcal{T}_h)$ denotes the grid
  function generated by $(\Psi,\overline{h},\xi)$ with step sizes $h$.
\end{lemma}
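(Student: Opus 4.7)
The plan is to propagate the error by one step of the map $\Psi$ (not by one step of the exact equation as in Milstein's fundamental theorem), exploit the $L^2$-orthogonality of the conditional expectation $\E[\,\cdot\,|\F_{t_{i-1}}]$, and then iterate by a discrete Gronwall argument. To set things up, I would fix a vector of step sizes $h$, define the error $e_i := Z(t_i) - X_h(t_i)$, the residual $R_i := Z(t_i) - \Psi(Z(t_{i-1}), t_{i-1}, h_i)$, and the propagated increment $D_i := \Psi(Z(t_{i-1}), t_{i-1}, h_i) - \Psi(X_h(t_{i-1}), t_{i-1}, h_i)$, so that $e_i = R_i + D_i$ and $D_i$ is precisely the object to which C-stability applies with $Y = Z(t_{i-1})$ and $Z = X_h(t_{i-1})$.

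Next, I would split the squared error via the orthogonal decomposition
\begin{align*}
\| e_i \|_{L^2}^2 = \big\| \E[e_i | \F_{t_{i-1}}] \big\|_{L^2}^2 + \big\| (\id - \E[\,\cdot\,|\F_{t_{i-1}}])e_i \big\|_{L^2}^2,
\end{align*}
and bound each of the two summands by an application of the Young-type inequality $\|a+b\|^2 \le (1+\varepsilon)\|a\|^2 + (1+\varepsilon^{-1})\|b\|^2$ to $a = $ (the $D_i$-contribution) and $b = $ (the $R_i$-contribution). The key is to match the $\varepsilon$ to what C-stability can absorb: in the conditional-expectation part choose $\varepsilon = h_i$, so that the $D_i$-coefficient becomes $1+h_i$ and the $R_i$-coefficient becomes $1+h_i^{-1}$; in the orthogonal part choose $\varepsilon = \eta - 1$, so that the $D_i$-coefficient is exactly $\eta$ and the $R_i$-coefficient is $1 + (\eta-1)^{-1} = C_\eta$.

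Combining the two bounds and then invoking \eqref{eq:stab_cond1} gives
\begin{align*}
(1+h_i)\big\| \E[D_i | \F_{t_{i-1}}] \big\|_{L^2}^2 + \eta \big\| (\id - \E[\,\cdot\,|\F_{t_{i-1}}])D_i \big\|_{L^2}^2 \le (1+h_i)(1 + C_{\mathrm{stab}} h_i) \| e_{i-1} \|_{L^2}^2,
\end{align*}
and expanding $(1+h_i)(1+C_{\mathrm{stab}} h_i) \le 1 + (1 + C_{\mathrm{stab}}(1+\overline h))h_i$ yields the one-step recursion
\begin{align*}
\| e_i \|_{L^2}^2 \le \big(1 + (1 + C_{\mathrm{stab}}(1+\overline h))h_i\big)\| e_{i-1} \|_{L^2}^2 + (1+h_i^{-1})\big\| \E[R_i|\F_{t_{i-1}}] \big\|_{L^2}^2 + C_\eta \big\| (\id - \E[\,\cdot\,|\F_{t_{i-1}}])R_i \big\|_{L^2}^2.
\end{align*}

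Finally, iterating this recursion and bounding the growth factor by $\prod_{i=1}^n (1 + a h_i) \le \exp\big(a \sum_{i=1}^n h_i\big) \le \exp(aT)$ with $a = 1 + C_{\mathrm{stab}}(1+\overline h)$ produces the asserted estimate with the prefactor $\ee^{(1+C_{\mathrm{stab}}(1+\overline h))T}$ and the correct weights on the $R_i$-terms, and taking the maximum over $n$ completes the argument. I expect no genuine obstacle beyond the delicate bookkeeping in step three: the whole force of the lemma lies in choosing $\varepsilon = \eta - 1$ in the orthogonal part, which is where the strict inequality $\eta > 1$ in Definition~\ref{def:cstab} becomes essential, since otherwise $C_\eta$ would blow up and the orthogonal residual could not be controlled.
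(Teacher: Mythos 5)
Your proposal is correct and follows essentially the same route as the paper: the orthogonal splitting of the error with respect to $\E[\,\cdot\,|\F_{t_{i-1}}]$, the weighted Young inequality with weights $h_i$ in the conditional part and $\eta-1$ in the orthogonal part, the application of \eqref{eq:stab_cond1} to the propagated term, and a Gronwall-type iteration. The only deviations (absorbing the extra factor via $(1+h_i)(1+C_{\mathrm{stab}}h_i)\le 1+(1+C_{\mathrm{stab}}(1+\overline h))h_i$ instead of bounding $h_i\|\E[D_i|\F_{t_{i-1}}]\|^2$ separately, and iterating the recursion directly rather than telescoping plus the discrete Gronwall lemma) are cosmetic and yield the identical constant.
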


\begin{proof}
  For every $1 \le i \le N$ we write the difference of the two grid
  functions as
  \begin{align*}
    e_h(t_i) &:= Z(t_i) - X_h(t_i).
  \end{align*}
  By the orthogonality of the conditional expectation it holds
  \begin{align*}
    \| e_h(t_i) \|_{L^2(\Omega;\R^d)}^2 = \big\| \E [ e_h(t_i) | \F_{t_{i-1}} ]
    \big\|^2_{L^2(\Omega;\R^d)} + \big\| e_h(t_i) - \E [ e_h(t_i) | \F_{t_{i-1}}
    ] \big\|^2_{L^2(\Omega;\R^d)}. 
  \end{align*}
  The first term is estimated as follows: Since
  \begin{align*}
    e_h(t_i) &= Z(t_i) - \Psi(Z(t_{i-1}),t_{i-1},h_{i}) +
    \Psi(Z(t_{i-1}),t_{i-1},h_{i}) - X_h(t_i)
  \end{align*}
  we first have
  \begin{align*}
    \big\| \E [ e_h(t_i) | \F_{t_{i-1}} ] \big\|_{L^2(\Omega;\R^d)}
    &\le \big\| \E \big[ Z(t_i) - \Psi(Z(t_{i-1}),t_{i-1},h_{i})
    | \F_{t_{i-1}} \big] \big\|_{L^2(\Omega;\R^d)}\\
    &\quad + \big\| \E \big[ \Psi(Z(t_{i-1}),t_{i-1},h_{i})
    - X_h(t_{i}) | \F_{t_{i-1}} \big] \big\|_{L^2(\Omega;\R^d)}.
  \end{align*}
  Then, after taking squares, it follows from the inequality $(a + b)^2 = a^2 +
  2ab +b^2 \le (1 + h_{i}^{-1}) a^2 + (1 + h_i ) b^2$ that
  \begin{align*}
    &\big\| \E [ e_h(t_i) | \F_{t_{i-1}} ] \big\|^2_{L^2(\Omega;\R^d)}\\
    &\quad \le (1 + h_{i}^{-1}) \big\| \E \big[ Z(t_i) -
    \Psi(Z(t_{i-1}),t_{i-1},h_{i}) | \F_{t_{i-1}} \big]
    \big\|_{L^2(\Omega;\R^d)}^2 \\
    &\qquad +(1 + h_i ) \big\| \E \big[ \Psi(Z(t_{i-1}),t_{i-1},h_{i})
    -  X_h(t_{i})   |\F_{t_{i-1}} \big]
    \big\|_{L^2(\Omega;\R^d)}^2.
  \end{align*}
  Replacing $h_i$ by $\eta -1$, the second term is estimated by
  \begin{align*}
    &\big\| e_h(t_i) - \E [ e_h(t_i) | \F_{t_{i-1}}
    ] \big\|^2_{L^2(\Omega;\R^d)} \\
    &\quad \le C_\eta \big\| \big( \id - \E [ \, \cdot \, | \F_{t_{i-1}}
    ] \big) \big( Z(t_i) - \Psi(Z(t_{i-1}),t_{i-1},h_{i}) \big)
    \big\|^2_{L^2(\Omega;\R^d)} \\
    &\qquad + \eta \big\| \big( \id - \E [ \, \cdot \, | \F_{t_{i-1}} ]
    \big) \big( \Psi(Z(t_{i-1}),t_{i-1},h_{i}) -
    X_h(t_{i}) \big) \big\|^2_{L^2(\Omega;\R^d)},
  \end{align*}
  where $C_\eta = 1 + (\eta - 1)^{-1}$. To sum up, we have shown that
  \begin{align*}
    &\big\| Z(t_i) - X_h(t_i) \big\|_{L^2(\Omega;\R^d)}^2\\
    &\quad \le (1 + h_{i}^{-1}) \big\| \E \big[ Z(t_i) -
    \Psi(Z(t_{i-1}),t_{i-1},h_{i}) | \F_{t_{i-1}} \big]
    \big\|_{L^2(\Omega;\R^d)}^2\\
    &\qquad + (1 + h_i ) \big\| \E \big[ \Psi(Z(t_{i-1}),t_{i-1},h_{i})
    -  X_h(t_{i}) | \F_{t_{i-1}} \big]
    \big\|_{L^2(\Omega;\R^d)}^2\\
    &\qquad + C_\eta \big\| \big( \id - \E [ \, \cdot \, | \F_{t_{i-1}} ]
    \big) \big( Z(t_i) - \Psi(Z(t_{i-1}),t_{i-1},h_{i}) \big)
    \big\|^2_{L^2(\Omega;\R^d)} \\
    &\qquad + \eta \big\| \big( \id - \E [ \, \cdot \, | \F_{t_{i-1}} ]
    \big) \big( \Psi(Z(t_{i-1}),t_{i-1},h_{i}) - X_h(t_{i}) \big) 
    \big\|^2_{L^2(\Omega;\R^d)}
  \end{align*}
  for all $1 \le i \le N$. After inserting $X_h(t_i) = \Psi(X_h(t_{i-1}),
  t_{i-1},h_i)$ and \eqref{eq:stab_cond1} we get
  \begin{align*}
    &\big\| Z(t_i) - X_h(t_i) \big\|_{L^2(\Omega;\R^d)}^2\\
    &\quad \le (1 + h_{i}^{-1}) \big\| \E \big[ Z(t_i) -
    \Psi(Z(t_{i-1}),t_{i-1},h_{i}) | \F_{t_{i-1}} \big]
    \big\|_{L^2(\Omega;\R^d)}^2\\
    &\qquad + C_\eta \big\| \big( \id - \E [ \, \cdot \, | \F_{t_{i-1}} ]
    \big) \big( Z(t_i) - \Psi(Z(t_{i-1}),t_{i-1},h_{i}) \big)
    \big\|^2_{L^2(\Omega;\R^d)} \\
    &\qquad + \big(1 + (1 + C_{\mathrm{stab}}(1 + \overline{h}))  h_i  \big)
    \big\| Z(t_{i-1}) - X_h(t_{i-1}) \big\|_{L^2(\Omega;\R^d)}^2,
  \end{align*}
  where we also made use of the fact that by \eqref{eq:stab_cond1}
  \begin{align*}
    &h_i \big\| \E \big[ \Psi(Z(t_{i-1}),t_{i-1},h_{i})
    -  X_h(t_{i}) | \F_{t_{i-1}} \big]\big\|_{L^2(\Omega;\R^d)}^2 \\
    &\quad \le h_i (1 +
    C_{\mathrm{stab}}\overline{h}) \big\| Z(t_{i-1}) - X_h(t_{i-1})
    \big\|_{L^2(\Omega;\R^d)}^2.
  \end{align*}
  Next, we subtract $\big\| Z(t_{i-1}) - X_h(t_{i-1})
  \big\|_{L^2(\Omega;\R^d)}^2$ from both sides of this inequality. Together
  with a telescopic sum argument this yields
  \begin{align*}
    &\big\| Z(t_n) - X_h(t_n) \big\|_{L^2(\Omega;\R^d)}^2 - \big\|
    Z(0) - X_h(0) \big\|_{L^2(\Omega;\R^d)}^2 \\
    &\quad = \sum_{i = 1}^n \Big( \big\| Z(t_i) - X_h(t_i)
    \big\|_{L^2(\Omega;\R^d)}^2 - \big\| Z(t_{i-1}) - X_h(t_{i-1})
    \big\|_{L^2(\Omega;\R^d)}^2 \Big)\\
    &\quad \le \sum_{i = 1}^{n} \Big( \big(1 + h_i^{-1}\big) \big\| \E \big[
    Z(t_i) - \Psi(Z(t_{i-1}), t_{i-1}, h_i) | \F_{t_{i-1}} \big]
    \big\|^2_{L^2(\Omega;\R^d)}\\ 
    &\qquad +  C_\eta \big\| \big( \id - \E \big[ \, \cdot \,
    | \F_{t_{i-1}} \big] \big) \big( Z(t_i) -
    \Psi(Z(t_{i-1}), t_{i-1}, h_i) \big) \big\|^2_{L^2(\Omega;\R^d)}\\ 
    &\qquad +(1 + C_{\mathrm{stab}}(1 + \overline{h})) h_i 
    \big\| Z(t_{i-1}) - X_h(t_{i-1}) \big\|_{L^2(\Omega;\R^d)}^2 \Big).
  \end{align*}
  After adding $\| Z(0) - X_h(0) \|_{L^2(\Omega;\R^d)}^2 = \| Z(0) - \xi
  \|_{L^2(\Omega;\R^d)}^2$ the assertion follows from an application of the
  discrete Gronwall Lemma. 
\end{proof}

A simple consequence of the stability lemma is the following estimate of the
second moment of the grid function generated by the numerical method.

\begin{corollary}
  \label{cor:bound}
  Let $(\Psi,\overline{h},\xi)$ be stochastically C-stable. If
  there exists a constant $C_0$ such that for all $(t,\delta) \in \mathbb{T}$
  it holds
  \begin{align*}
    \big\| \E \big[ \Psi(0, t, \delta) | \F_{t} \big]
    \big\|_{L^2(\Omega;\R^d)} &\le C_0 \delta,\\
    \big\| \big( \id - \E \big[ \, \cdot \,
    | \F_{t} \big] \big) \Psi(0, t, \delta)
    \big\|_{L^2(\Omega;\R^d)}&\le C_0 \delta^{\frac{1}{2}}, 
  \end{align*}
  then there exists a constant $C >0$ such that for all vectors of step sizes
  $h \in (0,\overline{h}]^N$,
  \begin{align*}
    &\max_{n \in \{0,\ldots,N\}} \| X_h(t_n)  \|_{L^2(\Omega;\R^d)}
    \le \ee^{C T}
    \Big( \| \xi  \|_{L^2(\Omega;\R^d)}^2 
    + C_0^2 (1 + \overline{h} + C_\eta) T \Big)^{\frac{1}{2}},
  \end{align*}
  where $X_h$ denotes the grid function generated by $(\Psi,\overline{h},\xi)$
  with step sizes $h$. 
\end{corollary}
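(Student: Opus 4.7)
The plan is to apply the stability Lemma~\ref{lem:stab} with the trivial choice $Z \equiv 0$ as an element of $\mathcal{G}^2(\mathcal{T}_h)$. Under this choice, the left-hand side of the lemma becomes $\max_n \|X_h(t_n)\|_{L^2(\Omega;\R^d)}^2$, the initial error term becomes $\|\xi\|_{L^2(\Omega;\R^d)}^2$, and the increments $Z(t_i) - \Psi(Z(t_{i-1}), t_{i-1}, h_i)$ reduce to $-\Psi(0, t_{i-1}, h_i)$. Thus the two hypothesized bounds on $\Psi(0,t,\delta)$ control precisely the local terms that appear on the right-hand side of the stability estimate.

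Next I would insert these bounds into the sum. The conditional expectation term contributes at most $(1+h_i^{-1}) C_0^2 h_i^2 = C_0^2(h_i^2 + h_i)$ for each $i$, while the orthogonal term contributes at most $C_\eta C_0^2 h_i$. Summing over $i = 1,\ldots,N$ and using $\sum_{i=1}^N h_i = T$ together with $\sum_{i=1}^N h_i^2 \le \overline{h} \sum_{i=1}^N h_i = \overline{h} T$, one finds
\begin{align*}
\sum_{i=1}^N (1+h_i^{-1}) C_0^2 h_i^2 + C_\eta \sum_{i=1}^N C_0^2 h_i \le C_0^2 (1 + \overline{h} + C_\eta) T.
\end{align*}

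Combining these estimates with the exponential prefactor from Lemma~\ref{lem:stab} yields
\begin{align*}
\max_{n} \|X_h(t_n)\|_{L^2(\Omega;\R^d)}^2 \le \ee^{(1 + C_{\mathrm{stab}}(1+\overline{h}))T} \Big( \|\xi\|_{L^2(\Omega;\R^d)}^2 + C_0^2(1+\overline{h}+C_\eta) T \Big),
\end{align*}
and taking square roots gives the stated bound with $C = \tfrac{1}{2}(1 + C_{\mathrm{stab}}(1+\overline{h}))$. No real obstacle is expected here — the corollary is essentially a direct rereading of the stability lemma with the trivial comparison grid function, so the only care needed is in matching the constants and in correctly exploiting $\sum h_i = T$ to absorb the $h_i^{-1}$ factor against the $\delta^2$ decay of the drift-type local bound.
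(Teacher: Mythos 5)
Your proposal is correct and follows exactly the paper's own argument: the paper proves this corollary in one line by applying Lemma~\ref{lem:stab} with $Z \equiv 0$, which is precisely your approach, and your bookkeeping of the constants (absorbing $\sum h_i^2 \le \overline{h}T$ and $\sum h_i = T$, then taking square roots to get $C = \tfrac{1}{2}(1+C_{\mathrm{stab}}(1+\overline{h}))$) is accurate.
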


\begin{proof}
  The assertion follows directly from an application of Lemma~\ref{lem:stab}
  with $Z \equiv 0 \in \mathcal{G}^2(\mathcal{T}_h)$.
\end{proof}

As the next theorem shows consistency and stability imply the strong
convergence of a stochastic one-step method. 

\begin{theorem}
  \label{th:Bconv}
  Let the stochastic one-step method $(\Psi,\overline{h},\xi)$ be
  stochastically C-stable and stochastically B-consistent of order $\gamma >
  0$. If $\xi = X_0$, then there exists a constant $C$ depending on
  $C_{\mathrm{stab}}$, $C_\mathrm{cons}$, $T$, $\overline{h}$, and $\eta$
  such that for every vector of step sizes $h \in (0,\overline{h}]^N$ it holds 
  \begin{align*}
    \max_{n \in \{0,\ldots,N\}} \big\| X(t_n) - X_h(t_n)
    \big\|_{L^2(\Omega;\R^d)} \le C |h|^{\gamma},  
  \end{align*}
  where $X$ denotes the exact solution to \eqref{sode} and $X_h$ the grid
  function generated by $(\Psi,\overline{h},\xi)$ with step sizes $h$.  In
  particular, $(\Psi,\overline{h},\xi)$ is strongly convergent of order
  $\gamma$.
\end{theorem}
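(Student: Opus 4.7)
The plan is to apply the stability lemma (Lemma~\ref{lem:stab}) with the comparison grid function $Z$ chosen to be the exact solution $X$ restricted to the grid $\mathcal{T}_h$. Since $X$ is $(\F_t)$-adapted and the B-consistency inequalities \eqref{eq:cons_cond1}--\eqref{eq:cons_cond2} implicitly assume $X(t),\Psi(X(t),t,\delta)\in L^2(\Omega;\R^d)$, we do have $X\big|_{\mathcal{T}_h}\in\mathcal{G}^2(\mathcal{T}_h)$. Because of the hypothesis $\xi=X_0$, the initial-data term $\|X(0)-\xi\|_{L^2(\Omega;\R^d)}^2$ in Lemma~\ref{lem:stab} vanishes, so the entire global error is controlled by the two sums on the right-hand side involving the local residuals $X(t_i)-\Psi(X(t_{i-1}),t_{i-1},h_i)$.

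Next I would insert the B-consistency bounds. For each $i$, \eqref{eq:cons_cond1} yields
\begin{align*}
(1+h_i^{-1})\,\bigl\|\E\bigl[X(t_i)-\Psi(X(t_{i-1}),t_{i-1},h_i)\mid\F_{t_{i-1}}\bigr]\bigr\|_{L^2(\Omega;\R^d)}^2
\le C_{\mathrm{cons}}^2(1+\overline{h})\,h_i^{2\gamma+1},
\end{align*}
where I used $(1+h_i^{-1})h_i^{2\gamma+2}=h_i^{2\gamma+2}+h_i^{2\gamma+1}\le(1+\overline{h})h_i^{2\gamma+1}$, so the factor $h_i^{-1}$ appearing in Lemma~\ref{lem:stab} is exactly absorbed by the extra power of $\delta$ in \eqref{eq:cons_cond1} (this is the crux of the order-counting). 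Similarly \eqref{eq:cons_cond2} gives
\begin{align*}
C_\eta\,\bigl\|\bigl(\id-\E[\,\cdot\,\mid\F_{t_{i-1}}]\bigr)\bigl(X(t_i)-\Psi(X(t_{i-1}),t_{i-1},h_i)\bigr)\bigr\|_{L^2(\Omega;\R^d)}^2
\le C_\eta C_{\mathrm{cons}}^2\,h_i^{2\gamma+1}.
\end{align*}

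Summing over $i=1,\ldots,N$ and using $\sum_{i=1}^N h_i^{2\gamma+1}\le |h|^{2\gamma}\sum_{i=1}^N h_i=|h|^{2\gamma}\,T$ yields
\begin{align*}
\max_{n\in\{0,\ldots,N\}}\bigl\|X(t_n)-X_h(t_n)\bigr\|_{L^2(\Omega;\R^d)}^2
\le \ee^{(1+C_{\mathrm{stab}}(1+\overline{h}))T}\,C_{\mathrm{cons}}^2\bigl(1+\overline{h}+C_\eta\bigr)\,T\,|h|^{2\gamma}.
\end{align*}
Taking square roots gives the asserted bound with
$C=\ee^{(1+C_{\mathrm{stab}}(1+\overline{h}))T/2}\,C_{\mathrm{cons}}\,\bigl((1+\overline{h}+C_\eta)T\bigr)^{1/2}$, which depends only on the stated quantities.

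There is no genuine obstacle in this argument: once Lemma~\ref{lem:stab} is in hand, the proof is essentially a book-keeping exercise. The only subtle point to verify is the order balance — that the C-stability formulation allows an asymmetric requirement $\gamma+1$ in the conditional-expectation part and $\gamma+\tfrac12$ in the orthogonal part of B-consistency, so that after being multiplied by $1+h_i^{-1}$ and $C_\eta$ respectively, both contributions produce the same power $h_i^{2\gamma+1}$. This asymmetry, together with the orthogonal decomposition of the error by conditional expectation that is already exploited in Lemma~\ref{lem:stab}, is precisely what lets one avoid higher moment bounds on $X_h$ and obtain the full order $\gamma$ from a one-shot Gronwall argument.
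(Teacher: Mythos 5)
Your proposal is correct and follows essentially the same route as the paper: apply Lemma~\ref{lem:stab} with $Z=X\big|_{\mathcal{T}_h}$, use $\xi=X_0$ to kill the initial term, insert \eqref{eq:cons_cond1}--\eqref{eq:cons_cond2}, and sum $(1+h_i^{-1})h_i^{2(\gamma+1)}+C_\eta h_i^{2\gamma+1}\le(1+\overline{h}+C_\eta)h_i^{2\gamma+1}$ to get the stated constant. Your remark on the order balance between the two consistency exponents matches the paper's design exactly.
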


\begin{proof}
  Let $h \in (0,\overline{h}]^N$ be an arbitrary vector of step sizes. Since
  $X(0) = X_h(0) = X_0$ it directly follows from Lemma~\ref{lem:stab} that
  \begin{align*}
    &\max_{n \in \{0,\ldots,N\}} \| X(t_n) - X_h(t_n)
    \|_{L^2(\Omega;\R^d)}^2\\
    &\quad \le \ee^{(1 + C_{\mathrm{stab}}(1 + \overline{h})) T} \Big( \sum_{i =
    1}^{N} \big(1 + h_i^{-1}\big) \big\| \E \big[ X(t_i) - \Psi(X(t_{i-1}),
    t_{i-1}, h_i) | \F_{t_i} \big] \big\|^2_{L^2(\Omega;\R^d)}\\ 
    &\qquad + C_\eta \sum_{i = 1}^{N} \big\| \big( \id - \E \big[ \,
    \cdot \, | \F_{t_{i-1}} \big] \big) \big( X(t_i) -
    \Psi(X(t_{i-1}), t_{i-1}, h_i) \big) \big\|^2_{L^2(\Omega;\R^d)}  \Big).
  \end{align*}
  After using \eqref{eq:cons_cond1} and \eqref{eq:cons_cond2} we get
  \begin{align*}
    &\max_{n \in \{0,\ldots,N\}} \big\| X(t_n) - X_h(t_n)
    \big\|_{L^2(\Omega;\R^d)}^2\\ 
    &\quad \le \ee^{(1 + C_{\mathrm{stab}}(1 + \overline{h})) T}
    C_{\mathrm{cons}}^2 \sum_{i = 1}^N \Big( (1 + h_i^{-1}) h_i^{2(\gamma + 1)}
    + C_\eta h_i^{2 \gamma + 1} \Big)\\ 
    &\quad \le \ee^{(1 + C_{\mathrm{stab}}(1 + \overline{h})) T}
    (1 + \overline{h} + C_\eta) T C_{\mathrm{cons}}^2 |h|^{2 \gamma}. 
   \end{align*}
   This completes the proof.
\end{proof}

\section{Solving nonlinear equations under a one-sided Lipschitz
condition}
\label{sec:nonlinear}
This section collects some results on the solvability of nonlinear equations
under a one-sided Lipschitz condition, which are needed for the error analysis
of the split-step backward Euler scheme.

The following Uniform Monotonicity Theorem is a standard
result in nonlinear analysis (see for instance, \cite[Chap.6.4]{ortega2000}, 
\cite[Theorem~C.2]{stuart1996}). We take explicit
notice of the Lip\-schitz bound for the inverse which will be used later on.
\begin{theorem}
  \label{th:monotonicity}
  Let $G \colon \R^d \to \R^d$ be a continuous mapping such that there exists a
  positive constant $c$ with
  \begin{align}
    \label{onesided}
    \langle G(x_1) - G(x_2), x_1 - x_2 \rangle \ge c | x_1 - x_2 |^2
  \end{align}
  for all $x_1, x_2 \in \R^d$. Then $G$ is a homeomorphism with Lipschitz
  continuous inverse, in particular
  \begin{align}
    \label{lipinv}
    \big| G^{-1}(y_1)-G^{-1}(y_2) \big| \le \frac{1}{c} |y_1 - y_2|
  \end{align}
  for all $y_1, y_2 \in \R^d$.
\end{theorem}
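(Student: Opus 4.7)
The plan is to establish the Lipschitz lower bound for $G$ first (which takes care of injectivity and of \eqref{lipinv} automatically once $G$ is shown to be onto), and then to prove surjectivity by a topological argument.

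First I would apply the Cauchy--Schwarz inequality to the right-hand side of \eqref{onesided}: for all $x_1,x_2 \in \R^d$,
\[
c|x_1-x_2|^2 \le \langle G(x_1)-G(x_2),\,x_1-x_2\rangle \le |G(x_1)-G(x_2)|\,|x_1-x_2|,
\]
hence $|G(x_1)-G(x_2)| \ge c|x_1-x_2|$. This shows at once that $G$ is injective, and once surjectivity is available it yields the Lipschitz bound \eqref{lipinv} for $G^{-1}$. As a side benefit, $G$ is a proper map with closed image in $\R^d$: a Cauchy sequence in $G(\R^d)$ lifts via the inequality above to a Cauchy sequence in $\R^d$, whose limit is carried back into the image by continuity of $G$.

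For surjectivity, fix $y \in \R^d$ and set $F(x) := y - G(x)$. Applying \eqref{onesided} to the pair $(x,0)$ and using Cauchy--Schwarz gives
\[
\langle G(x)-y,\,x\rangle \ge c|x|^2 - |G(0)-y|\,|x|,
\]
which is strictly positive as soon as $|x| \ge R := c^{-1}|G(0)-y|+1$. Hence the continuous vector field $F$ points strictly inward on the sphere $\partial B_R = \{x\in\R^d:|x|=R\}$. A standard consequence of Brouwer's fixed point theorem (equivalently, a degree argument: $F$ is homotopic on $\overline{B_R}$ to $-\id$ through maps that never vanish on $\partial B_R$) then produces some $x \in \overline{B_R}$ with $F(x)=0$, i.e.\ $G(x)=y$. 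Combining injectivity, surjectivity, and the Lipschitz bound from the first step, $G$ is a homeomorphism of $\R^d$ onto itself, and the estimate \eqref{lipinv} is exactly what we derived in the first step.

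The main obstacle is the surjectivity step. The monotonicity condition by itself only supplies the inward-pointing inequality on $\partial B_R$; a genuinely topological input such as Brouwer's theorem (or, alternatively, invariance of domain applied to the continuous injection $G$, together with the closedness of $G(\R^d)$ shown in the first paragraph, giving that $G(\R^d)$ is both open and closed and hence all of $\R^d$) is needed to conclude existence of a zero of $F$ inside $B_R$. Since both routes are entirely standard, I would at this point simply defer to the references cited in the statement, namely \cite[Chap.~6.4]{ortega2000} and \cite[Theorem~C.2]{stuart1996}.
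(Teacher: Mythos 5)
Your proposal is correct and follows essentially the same route as the paper: the Lipschitz estimate \eqref{lipinv} is obtained from \eqref{onesided} via the Cauchy--Schwarz inequality exactly as in the paper's proof, and the unique solvability of $G(x)=y$ is ultimately deferred to the same references \cite[Chap.~6.4]{ortega2000}, \cite[Theorem~C.2]{stuart1996}. The extra sketch you give of the surjectivity step (the inward-pointing field on $\partial B_R$ combined with a degree/Brouwer argument, or alternatively invariance of domain together with the closedness of the image) is correct and would make the argument self-contained, which the paper does not attempt.
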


\begin{proof}
  It is well known \cite[Chap.~6.4]{ortega2000},
  \cite[Theorem~C.2]{stuart1996} that $G(x)=y$ has a unique solution for  
  every $ y\in \R^d$. Setting $x_1=G^{-1}(y_1),x_2=G^{-1}(y_2)$, condition
  \eqref{onesided} implies 
  \begin{align*}
    c |x_1-x_2 |^2 \le
    \langle y_1-y_2, x_1-x_2 \rangle 
    \le |y_1 -y_2| |x_1-x_2|,
  \end{align*}
  from which the Lipschitz estimate \eqref{lipinv} follows.
\end{proof} 

The following consequence of Theorem~\ref{th:monotonicity} contains the key
estimates for the C-stability of the split-step backward Euler scheme.
For related estimates under global Lipschitz conditions on the diffusion
coefficient functions we refer to \cite[Lemmas 3.4, 4.5]{higham2002b}.

\begin{corollary}
  \label{cor:homeomorph}
  Let the functions $f \colon [0,T] \times \R^d \to \R^d$ and $g^r \colon [0,T]
  \times \R^d \to \R^d$, $r = 1,\ldots,m$, satisfy Assumption~\ref{as:fg} with
  Lipschitz constant $L > 0$ and parameter value $\eta \in
  (\frac{1}{2},\infty)$. Let
  $\overline{h} \in (0,L^{-1})$ be given and define for every $\delta \in
  (0,\overline{h}]$ the mapping $F_\delta \colon [0,T] \times \R^d \to \R^d$ by
  $F_\delta(t,x) = x - \delta f(t, x)$. Then, the mapping $\R^d \ni x \mapsto
  F_\delta(t,x) \in \R^d$ is a homeomorphism for every $t \in [0,T]$. 
  
  In addition, the inverse $F_\delta^{-1}(t,\cdot) \colon \R^d
  \to \R^d$ satisfies
  \begin{align}
    \label{eq:Fhinv_lip}
    \big| F_\delta^{-1}(t,x_1)-F_\delta^{-1}(t,x_2) \big| &\le (1 - L
    \delta)^{-1} | x_1 - x_2 |, \\ 
    \label{eq:Fhinv_growth}
    \big| F_\delta^{-1}(t,x) \big| &\le(1 - L
    \delta)^{-1} \big( L \delta + | x | \big),
  \end{align}
  for every $x,x_1, x_2 \in \R^d$ and $t \in [0,T]$. Moreover, there
  exists a constant $C_1$ only depending on $L$ and $\overline{h}$ such
  that
  \begin{align}
    \label{eq:stab}
    \begin{split}
      &\big| F_\delta^{-1}(t,x_1) - F_\delta^{-1}(t,x_2) \big|^2 + \eta \delta 
      \sum_{r = 1}^m \big| g^r(t, F_\delta^{-1}(t,x_1)) - g^r(t,
      F_\delta^{-1}(t,x_2)) \big|^2\\
      &\qquad \le (1 + C_1 \delta) \big| x_1 - x_2 \big|^2
    \end{split}
  \end{align}
  for every $x_1, x_2 \in \R^d$ and $t \in [0,T]$.
\end{corollary}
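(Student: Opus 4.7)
The first three claims (homeomorphism, Lipschitz inverse \eqref{eq:Fhinv_lip}, and growth bound \eqref{eq:Fhinv_growth}) reduce to direct applications of Theorem~\ref{th:monotonicity}. For fixed $t \in [0,T]$ and $\delta \in (0,\overline{h}]$, the monotonicity hypothesis for $G := F_\delta(t, \cdot)$ follows by expanding the inner product and discarding the nonnegative diffusion term in \eqref{eq3:onesided}:
\begin{align*}
\langle F_\delta(t,x_1) - F_\delta(t,x_2), x_1 - x_2\rangle &= |x_1-x_2|^2 - \delta \langle f(t,x_1) - f(t,x_2), x_1-x_2\rangle \\
&\ge (1 - L\delta)|x_1-x_2|^2.
\end{align*}
The assumption $\overline{h} < L^{-1}$ ensures $c := 1 - L\delta > 0$, so Theorem~\ref{th:monotonicity} gives the homeomorphism property together with \eqref{eq:Fhinv_lip}. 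The growth bound \eqref{eq:Fhinv_growth} then follows on noting that $F_\delta(t,0) = -\delta f(t,0)$, so $F_\delta^{-1}(t,-\delta f(t,0)) = 0$, and applying \eqref{eq:Fhinv_lip} with $x_2 = -\delta f(t,0)$ together with $|f(t,0)| \le L$ from \eqref{eq3:poly_growth}.

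For the more delicate inequality \eqref{eq:stab}, I would set $y_j := F_\delta^{-1}(t, x_j)$, so that $x_j = y_j - \delta f(t, y_j)$, and expand the square:
\begin{align*}
|x_1 - x_2|^2 = |y_1 - y_2|^2 - 2\delta \langle f(t,y_1) - f(t,y_2), y_1 - y_2\rangle + \delta^2 |f(t,y_1) - f(t,y_2)|^2.
\end{align*}
Applying the one-sided bound \eqref{eq3:onesided} to the inner product and dropping the nonnegative quadratic $\delta^2|\cdots|^2$ yields
\begin{align*}
(1 - 2L\delta)|y_1-y_2|^2 + 2\eta\delta \sum_{r=1}^m |g^r(t,y_1) - g^r(t,y_2)|^2 \le |x_1 - x_2|^2.
\end{align*}
Isolating the $g^r$-sum and adding $|y_1-y_2|^2$ to both sides gives
\begin{align*}
|y_1-y_2|^2 + \eta\delta \sum_{r=1}^m |g^r(t,y_1) - g^r(t,y_2)|^2 \le \frac{1}{2}|x_1-x_2|^2 + \frac{1+2L\delta}{2}|y_1-y_2|^2.
\end{align*}
Substituting the already proven Lipschitz bound $|y_1-y_2|^2 \le (1-L\delta)^{-2}|x_1-x_2|^2$ from \eqref{eq:Fhinv_lip} turns the right-hand side into $\frac{2 + L^2\delta^2}{2(1-L\delta)^2}|x_1-x_2|^2$. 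A short manipulation shows that this factor equals $1 + \frac{L\delta(4-L\delta)}{2(1-L\delta)^2}$, which is bounded by $1 + C_1 \delta$ with $C_1 := 2L(1 - L\overline{h})^{-2}$, using only $\delta \le \overline{h} < L^{-1}$.

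The main obstacle is the bookkeeping in the derivation of \eqref{eq:stab}: the natural expansion of $|x_1-x_2|^2$ produces a factor of $2\eta\delta$ in front of the diffusion sum, whereas the target inequality carries only $\eta\delta$. Half of this excess factor has to be traded against the residual term $(1-2L\delta)|y_1-y_2|^2$, whose sign is not known a priori on $(0,\overline{h}]$ since $\overline{h}$ may well exceed $(2L)^{-1}$. The Lipschitz bound on $F_\delta^{-1}(t,\cdot)$ serves exactly to absorb this residual, and a careful accounting of the algebra is what yields \eqref{eq:stab} with a clean constant $C_1$ depending only on $L$ and $\overline{h}$.
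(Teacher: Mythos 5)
Your proof is correct. The first half (homeomorphism, \eqref{eq:Fhinv_lip}, \eqref{eq:Fhinv_growth}) is exactly the paper's argument via Theorem~\ref{th:monotonicity}, including the use of $F_\delta(t,0)=-\delta f(t,0)$ and $|f(t,0)|\le L$. For \eqref{eq:stab} you take a genuinely different route: you expand the full square $|x_1-x_2|^2=|F_\delta(t,y_1)-F_\delta(t,y_2)|^2$, apply \eqref{eq3:onesided}, drop the term $\delta^2|f(t,y_1)-f(t,y_2)|^2$, and obtain $(1-2L\delta)|y_1-y_2|^2+2\eta\delta\sum_{r=1}^m|g^r(t,y_1)-g^r(t,y_2)|^2\le|x_1-x_2|^2$; the surplus factor $2$ in front of $\eta\delta$ is then traded against the possibly negative coefficient $1-2L\delta$ by averaging with the already established bound $|y_1-y_2|^2\le(1-L\delta)^{-2}|x_1-x_2|^2$, and your algebra (the factor $\tfrac{2+L^2\delta^2}{2(1-L\delta)^2}=1+\tfrac{L\delta(4-L\delta)}{2(1-L\delta)^2}\le 1+2L(1-L\overline{h})^{-2}\delta$) checks out, with the sign issue for $1-2L\delta$ handled correctly since the manipulation is purely algebraic. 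The paper instead never squares $F_\delta$: it rearranges the monotonicity inequality $\langle y_1-y_2,\,F_\delta^{-1}(t,y_1)-F_\delta^{-1}(t,y_2)\rangle\ge(1-L\delta)\,|F_\delta^{-1}(t,y_1)-F_\delta^{-1}(t,y_2)|^2+\eta\delta\sum_{r=1}^m|g^r(t,F_\delta^{-1}(t,y_1))-g^r(t,F_\delta^{-1}(t,y_2))|^2$ and estimates the inner product by the Cauchy--Schwarz inequality together with \eqref{eq:Fhinv_lip}, which keeps the prefactor $\eta\delta$ intact from the start and yields the slightly sharper bound $(1-L\delta)^{-2}|y_1-y_2|^2$, hence $C_1=L(2-L\overline{h})(1-L\overline{h})^{-2}$ versus your $C_1=2L(1-L\overline{h})^{-2}$. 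Your expansion-of-the-square strategy is closer in spirit to the later PEM stability estimate (Lemma~\ref{lem:PEM}), where a $2\eta\delta$ prefactor arises naturally; what the paper's bilinear-form argument buys is the avoidance of the factor-of-two bookkeeping and a marginally better constant, while your version needs no ingredient beyond \eqref{eq:Fhinv_lip} and is equally admissible, since the constant still depends only on $L$ and $\overline{h}$.
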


\begin{proof}
  Fix arbitrary $\delta \in (0, \overline{h}]$ and $t \in [0,T]$.
  First, note that by \eqref{eq3:onesided} the mapping $F_\delta(t,\cdot)
  \colon \R^d \to \R^d$ is continuous and satisfies   
  \begin{align*}
    &\langle F_\delta(t,x_1) - F_\delta(t,x_2), x_1 - x_2 \rangle\\
    &\quad = | x_1 - x_2 |^2 - \delta 
    \langle f(t,x_1) - f(t,x_2), x_1 - x_2 \rangle
    \ge (1 - L \delta) | x_1 - x_2 |^2 
  \end{align*}
  for all $x_1, x_2 \in \R^d$. Note that $1- L\delta > 0$ follows from
  $\overline{h} \in (0,L^{-1})$ and $\delta \in (0,\overline{h}]$. Hence, we
  directly obtain the first assertion and \eqref{eq:Fhinv_lip} from 
  Theorem~\ref{th:monotonicity}. 
  
  Next, we set $x_0 := F_\delta(t,0) = -\delta f(t ,0) \in \R^d$. Then
  $F_\delta^{-1}(t,x_0) = 0$ and for arbitrary $x \in \R^d$ by
  \eqref{eq:Fhinv_lip} and \eqref{eq3:poly_growth} we derive
  \begin{align*}
    \big| F_\delta^{-1}(t,x) \big|
    &= \big| F_\delta^{-1}(t,x) - F_\delta^{-1}(t,x_0) \big| 
    \le(1 - L \delta)^{-1} |x - x_0|\\
    & \le (1 - L \delta)^{-1} \big( | x | + \delta |f(t,0)| \big)
    \le (1 - L \delta)^{-1} \big( | x |+ L \delta  \big). 
  \end{align*}
  It remains to give a proof of \eqref{eq:stab}. By also taking the
  diffusion coefficient functions into account, it follows from
  \eqref{eq3:onesided} that 
  \begin{align*}
    &\langle F_\delta(t,x_1) - F_\delta(t,x_2), x_1 - x_2 \rangle\\
    &\quad = | x_1 - x_2 |^2 - \delta 
    \langle f(t,x_1) - f(t,x_2), x_1 - x_2 \rangle\\
    &\quad \ge (1 - L \delta) | x_1 - x_2 |^2 + \eta \delta \sum_{r = 1}^m \big|
    g^r(t,x_1) - g^r(t,x_2) \big|^2.
  \end{align*}
  For some $y_1, y_2 \in \R^d$ we substitute $x_1 = F_\delta^{-1}(t,y_1)$ and
  $x_2 = F_\delta^{-1}(t,y_2)$ into the inequality. Then, after rearranging
  we end up with
  \begin{align*}
    &\big|F_\delta^{-1}(t,y_1) - F_\delta^{-1}(t,y_2)\big|^2 + \eta \delta
    \sum_{r = 1}^m \big| g^r(t,F_\delta^{-1}(t,y_1)) -
    g^r(t,F_\delta^{-1}(t,y_2)) \big|^2\\
    &\quad \le \big\langle y_1 - y_2, F_\delta^{-1}(t,y_1) -
    F_\delta^{-1}(t,y_2) \big\rangle + L \delta \big| F_\delta^{-1}(t,y_1) -
    F_\delta^{-1}(t,y_2)\big|^2. 
  \end{align*}
  Now, an application of \eqref{eq:Fhinv_lip}, together with the Cauchy-Schwarz
  inequality, yields
  \begin{align*}
    &\big|F_\delta^{-1}(t,y_1) - F_\delta^{-1}(t,y_2)\big|^2 +\eta \delta
    \sum_{r = 1}^m \big| g^r(t,F_\delta^{-1}(t,y_1)) -
    g^r(t,F_\delta^{-1}(t,y_2)) \big|^2\\ 
    &\quad \le |y_1 - y_2 |  \big|F_\delta^{-1}(t,y_1) -
    F_\delta^{-1}(t,y_2)\big| +  L \delta \big| F_\delta^{-1}(t,y_1) -
    F_\delta^{-1}(t,y_2)\big|^2\\
    &\quad \le(1 - L \delta)^{-1} \big( 1 + (1 - L \delta)^{-1} L \delta \big)
    |y_1 - y_2 |^2 =(1-L \delta)^{-2}|y_1-y_2|^2
  \end{align*}
  for all $y_1, y_2 \in \R^d$.
  % \begin{align*}
  %   (1 - L \delta)^{-1} = 1 + (1 - L \delta)^{-1} \big( 1  - (1 - L \delta)
  %   \big) = 1 + (1 - L \delta)^{-1} L \delta
  % \end{align*}
  % as well as
Finally, note that $b(\delta)=(1-L \delta)^{-2}$ is a convex function, hence
for all $\delta \in [0,\overline{h}]$, 
\begin{align*}
  (1 - L \delta)^{-2} \le 1 + C_1 \delta , \quad
  C_1 = \frac{b(\overline{h})-b(0)} {\overline{h}}= L(2-L\overline{h}) 
  (1- L \overline{h})^{-2},
\end{align*}
and inequality \eqref{eq:stab} is verified.
\end{proof}

The following lemma contains some further estimates of $F_{\delta}^{-1}$, which
will be useful for the analysis of the local truncation error.

\begin{lemma}
  \label{lem:est_homeo}
  Consider the same situation as in Corollary~\ref{cor:homeomorph}. 
  Then there exist constants $C_2$, $C_3$ only depending on $L$, 
  $\overline{h}$ and $q$ such that for every $\delta \in (0,\overline{h}]$ the
  inverse $F_\delta^{-1}(t,\cdot) \colon \R^d \to \R^d$ satisfies the 
  estimates 
  \begin{align}
    \label{eq:1ord}
    \big| F_\delta^{-1}(t,x) - x \big| &\le \delta C_2 \big( 1 + |
    x|^q \big),\\
    \label{eq:2ord}
    \big| F_\delta^{-1}(t,x) - x - \delta f(t,x) \big| &\le \delta^2 C_3 \big(
    1 + |x|^{2q-1} \big) 
  \end{align}
  for every $x \in \R^d$ and $t \in [0,T]$. 
\end{lemma}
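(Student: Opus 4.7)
The starting point for both bounds is the defining identity of the inverse: setting $y = F_\delta^{-1}(t,x)$, we have $F_\delta(t,y) = y - \delta f(t,y) = x$, so
\begin{align*}
  y - x = \delta f(t,y).
\end{align*}
This rearranges the difference we want to estimate into an expression involving $f(t,y)$, which is the central observation for both parts.

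For the first estimate \eqref{eq:1ord}, I would take absolute values in the identity above and apply the polynomial growth bound \eqref{eq3:poly_growth}, which gives $|y-x|\le \delta L(1+|y|^q)$. The issue is that the right-hand side involves $|y|$ rather than $|x|$, and this is resolved by \eqref{eq:Fhinv_growth} from Corollary~\ref{cor:homeomorph}: since $\delta \le \overline{h}$ and $\overline{h}\in(0,L^{-1})$, the constant $(1-L\delta)^{-1}$ is uniformly bounded by $(1-L\overline{h})^{-1}$, so $|y|\le K(|x|+L\overline{h})$ with $K$ depending only on $L$ and $\overline{h}$. Raising to the $q$-th power with the elementary inequality $(a+b)^q \le 2^{q-1}(a^q+b^q)$ yields a bound of the form $C(1+|x|^q)$, and \eqref{eq:1ord} follows with a constant $C_2 = C_2(L,\overline{h},q)$.

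For the second estimate \eqref{eq:2ord}, I would use $y-x = \delta f(t,y)$ once more to rewrite
\begin{align*}
  F_\delta^{-1}(t,x) - x - \delta f(t,x) = \delta \bigl(f(t,y) - f(t,x)\bigr),
\end{align*}
so the problem reduces to bounding the local Lipschitz difference $f(t,y)-f(t,x)$. Applying \eqref{eq3:loc_Lip} gives
\begin{align*}
  |f(t,y)-f(t,x)| \le L\bigl(1+|x|^{q-1}+|y|^{q-1}\bigr)|y-x|.
\end{align*}
Now I can plug in the two facts already in hand: $|y-x|\le \delta C_2(1+|x|^q)$ from the first part, and $|y|^{q-1}\le C'(1+|x|^{q-1})$ from the same growth bound \eqref{eq:Fhinv_growth} used above. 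Multiplying out $(1+|x|^{q-1})(1+|x|^q)$ and absorbing lower-order terms into the $|x|^{2q-1}$ term gives $|f(t,y)-f(t,x)|\le \delta C(1+|x|^{2q-1})$. Multiplying by the outer $\delta$ produces the required $\delta^2$ prefactor, and \eqref{eq:2ord} follows with a constant $C_3$ depending only on $L$, $\overline{h}$, and $q$.

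The whole argument is essentially bookkeeping; there is no real obstacle. The one point that deserves care is making sure all constants depend only on $L$, $\overline{h}$, and $q$ (and not on $\delta$ or $t$), which is guaranteed because $1-L\delta \ge 1 - L\overline{h}>0$ uniformly in $\delta\in(0,\overline{h}]$, and because \eqref{eq3:poly_growth} and \eqref{eq3:loc_Lip} hold uniformly in $t\in[0,T]$.
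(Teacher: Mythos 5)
Your proposal is correct and takes essentially the same approach as the paper: both reduce everything to the defining identity $F_\delta^{-1}(t,x)-x=\delta f(t,F_\delta^{-1}(t,x))$ and then invoke Corollary~\ref{cor:homeomorph} together with \eqref{eq3:poly_growth} and \eqref{eq3:loc_Lip}, and your argument for \eqref{eq:2ord} coincides with the paper's. The only cosmetic difference is in \eqref{eq:1ord}, where the paper writes $\big|F_\delta^{-1}(t,x)-x\big|=\big|F_\delta^{-1}(t,x)-F_\delta^{-1}(t,F_\delta(t,x))\big|$ and applies the Lipschitz bound \eqref{eq:Fhinv_lip}, so that $f$ is evaluated directly at $x$ and one gets $C_2=L(1-L\overline{h})^{-1}$ without the detour through $|F_\delta^{-1}(t,x)|^q$ and \eqref{eq:Fhinv_growth}; your variant is equally valid since the statement permits $C_2$ to depend on $q$.
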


\begin{proof}
  Let $x \in \R^d$ be arbitrary. For the proof of \eqref{eq:1ord} we get from
  \eqref{eq:Fhinv_lip} that
  \begin{align*}
    \big| F_\delta^{-1}(t,x) -  x \big| = \big| F_\delta^{-1}(t,x) -  
    F_\delta^{-1}(t, F_\delta(t,x)) \big| \le (1 - L
    \delta)^{-1} \delta \big| f(t,x) \big|.
  \end{align*}
  After inserting \eqref{eq3:poly_growth} and since $\delta \le \overline{h}$
  we arrive at
  \begin{align*}
    \big| F_\delta^{-1}(t,x) -  x \big| \le L (1 - L \overline{h})^{-1} \delta
    \big( 1 + |x|^q \big),
  \end{align*}
  which is \eqref{eq:1ord} with $C_2 = L (1 - L\overline{h})^{-1}$. 
  
  Next, by making use of the substitution $y = F_\delta^{-1}(t,x)$ as well as
  \eqref{eq3:loc_Lip} we obtain
  \begin{align*}
    &\big| F_\delta^{-1}(t,x) - x - \delta f(t,x) \big|
    = \big| y - F_\delta(t,y) - \delta f(t,F_\delta(t,y)) \big| \\
    &\quad = \delta \big| f(t,y) - f(t,F_\delta(t,y)) \big| \\
    &\quad \le L \delta \big( 1 + |y|^{q-1} + | F_\delta(t,y) |^{q-1} \big)
    \big| y - F_\delta(t,y) \big| \\
    &\quad \le L \delta \big( 1 + |x|^{q-1} + | F_\delta^{-1}(t,x) |^{q-1}
    \big) \big| F_\delta^{-1}(t,x) - x \big|
  \end{align*}
  for every $x \in \R^d$. After inserting \eqref{eq:1ord} and
  \eqref{eq:Fhinv_growth} we find that
  \begin{align*}
    \big| F_\delta^{-1}(t,x) - x - \delta f(t,x) \big|
    &\le C_2 L \delta^2 \big( 1 + |x|^q \big) \big( 1  + |x|^{q-1} + |
    F_\delta^{-1}(t,x) |^{q-1} \big)\\ 
    &\le \delta^2 C_3 \big( 1 + |x|^{2q-1} \big)
  \end{align*}
  for a suitable constant $C_3$ only depending on $q$, $L$, and $\overline{h}$.
\end{proof}

\section{C-stability and B-consistency of the SSBE method}
\label{sec:SSBE}
In Section~\ref{sec:def} we derived a strong convergence result in an
abstract framework. Using Section~\ref{sec:nonlinear} we are
now able to verify that the split-step backward Euler scheme from 
Example~\ref{ex:SSBE1} is stable and consistent with order $\gamma =
\frac{1}{2}$. 

Let us first show that the SSBE method is indeed a
well-defined stochastic one-step method in the sense of
Definition~\ref{def:onestep}. In Section~\ref{sec:nonlinear} we saw that the
implicit step of the SSBE method admits a unique solution if $f$ satisfies
Assumption~\ref{as:fg} with one-sided Lipschitz constant $L$. To be more
precise, let $\overline{h} \in (0,L^{-1})$ 
and consider an arbitrary vector of step sizes $h \in (0,\overline{h}]^N$.
Then, we obtain from Corollary~\ref{cor:homeomorph} that for every $1 \le i \le
N$ there exists a homeomorphism $F_{h_i}(t_i,\cdot) \colon \R^d \to \R^d$ such
that $\overline{X}_h^{\mathrm{SSBE}}(t_i) =
F_{h_i}^{-1}(t_i,X_h^{\mathrm{SSBE}}(t_{i-1}))$ is the solution to 
\begin{align*}
  \overline{X}_h^{\mathrm{SSBE}}(t_i) &= X_h^{\mathrm{SSBE}}(t_{i-1}) + h_i
  f(t_{i}, \overline{X}_h^{\mathrm{SSBE}}(t_i)).
\end{align*}
Hence, we define the one-step map $\Psi^{\mathrm{SSBE}} \colon \R^d
\times \mathbb{T} \times \Omega \to \R^d$ of the split-step backward Euler
method by
\begin{align}
  \label{eq:PsiSSBE}
  \Psi^{\mathrm{SSBE}}(x,t,\delta) = F_{\delta}^{-1}(t+\delta,x) +
  \sum_{r=1}^m g^r(t + \delta, F_\delta^{-1}(t+\delta, x)) \Delta_\delta
  W^r(t)
\end{align}
for every $x \in \R^d$ and $(t,\delta) \in \mathbb{T}$, where $\Delta_\delta
W^r(t) := W^r(t + \delta) - W^r(t)$. Next, we verify that $\Psi^{\mathrm{SSBE}}$
satisfies condition \eqref{eq:Psicond} and the assumptions of Corollary
\ref{cor:bound}.

\begin{prop}  
  \label{prop:SSBE}
  Let the functions $f$ and $g^r$, $r = 1,\ldots,m$, satisfy
  Assumption~\ref{as:fg} with $L \in (0,\infty)$ and $q \in (1,\infty)$ and let
  $\overline{h} \in (0, L^{-1})$. For every initial value $\xi \in
  L^2(\Omega;\F_{0},\P;\R^d)$ 
  it holds that $(\Psi^{\mathrm{SSBE}}, \overline{h}, \xi)$ is a stochastic
  one-step method.  
  
  In addition, there exists a constant $C_0$,   which depends on $L$, $q$, $m$,
  and $\overline{h}$, such that 
  \begin{align}
    \label{eq:SSBEzero1}
    \big\| \E \big[ \Psi^{\mathrm{SSBE}}( 0, t,\delta) | \F_{t} \big]
    \big\|_{L^2(\Omega;\R^d)} &\le C_0 \delta,\\
    \label{eq:SSBEzero2}
    \big\| \big( \id - \E [ \, \cdot\, | \F_{t} ] \big)
    \Psi^{\mathrm{SSBE}}( 0, t,\delta) \big\|_{L^2(\Omega;\R^d)} & \le C_0
    \delta^{\frac{1}{2}}    
  \end{align}
  for all $(t,\delta) \in \mathbb{T}$.
\end{prop}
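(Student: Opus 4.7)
The plan is to verify the two assertions of the proposition separately. The first is essentially a bookkeeping exercise relying on the homeomorphism property of $F_\delta$, while the second reduces to direct computations thanks to the deterministic input $x = 0$.

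First, I would verify that $\Psi^{\mathrm{SSBE}}$ as defined by~\eqref{eq:PsiSSBE} satisfies the measurability and integrability condition~\eqref{eq:Psicond}. Since $\overline{h} \in (0, L^{-1})$, Corollary~\ref{cor:homeomorph} provides the existence and continuity of $F_\delta^{-1}(t+\delta, \cdot)$ for every $(t,\delta) \in \mathbb{T}$, which makes $\Psi^{\mathrm{SSBE}}(x, t, \delta)$ well-defined for every $x \in \R^d$. For $Z \in L^2(\Omega, \F_t, \P; \R^d)$, continuity of $F_\delta^{-1}(t+\delta, \cdot)$ implies that the composed random variable $F_\delta^{-1}(t+\delta, Z)$ is $\F_t$-measurable, and the Brownian increment $\Delta_\delta W^r(t)$ is $\F_{t+\delta}$-measurable by definition; hence the whole sum is $\F_{t+\delta}$-measurable. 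Square integrability of the implicit part follows from the linear growth bound~\eqref{eq:Fhinv_growth}, while the diffusion contribution is controlled using the polynomial growth~\eqref{eq3:poly_growth} together with the independence of $\Delta_\delta W^r(t)$ from~$\F_t$.

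For the estimates~\eqref{eq:SSBEzero1} and~\eqref{eq:SSBEzero2}, the key observation is that the input $x = 0$ is deterministic, so $F_\delta^{-1}(t+\delta, 0)$ and $g^r(t+\delta, F_\delta^{-1}(t+\delta, 0))$ are themselves deterministic vectors in $\R^d$. Since $\Delta_\delta W^r(t)$ is independent of $\F_t$ with mean zero, taking conditional expectation eliminates the noise term, leaving
\begin{align*}
  \E\big[ \Psi^{\mathrm{SSBE}}(0, t, \delta) \,|\, \F_t \big] &= F_\delta^{-1}(t+\delta, 0),\\
  \big( \id - \E[\,\cdot\,|\F_t] \big) \Psi^{\mathrm{SSBE}}(0, t, \delta) &= \sum_{r=1}^m g^r\big(t+\delta, F_\delta^{-1}(t+\delta, 0)\big) \Delta_\delta W^r(t).
\end{align*}
Applying~\eqref{eq:Fhinv_growth} at $x = 0$ gives $|F_\delta^{-1}(t+\delta, 0)| \le L (1 - L\overline{h})^{-1} \delta$, which directly yields~\eqref{eq:SSBEzero1}. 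For~\eqref{eq:SSBEzero2}, using orthogonality of the independent Brownian increments one computes the squared $L^2$-norm as $\delta \sum_{r=1}^m |g^r(t+\delta, F_\delta^{-1}(t+\delta, 0))|^2$; the polynomial bound~\eqref{eq3:poly_growth} combined with the deterministic estimate on $|F_\delta^{-1}(t+\delta, 0)|$ already derived bounds each $|g^r|$ by a constant depending only on $L$, $q$, $m$, and $\overline{h}$, producing the stated $\delta^{1/2}$ rate.

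Overall I do not anticipate a serious obstacle: the only mildly subtle point is correctly isolating the deterministic and noise contributions via the conditional expectation decomposition, after which everything reduces to the growth estimates already established in Corollary~\ref{cor:homeomorph} and Assumption~\ref{as:fg}.
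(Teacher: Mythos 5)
Your treatment of \eqref{eq:SSBEzero1} and \eqref{eq:SSBEzero2} is correct and matches the paper: with the deterministic input $x=0$ the conditional expectation removes the martingale part, and the bounds follow from \eqref{eq:Fhinv_growth} and \eqref{eq3:poly_growth} exactly as you describe. The measurability part of \eqref{eq:Psicond} is also fine. However, there is a genuine gap in your verification of the square integrability required by \eqref{eq:Psicond} for an \emph{arbitrary} $Z \in L^2(\Omega,\F_t,\P;\R^d)$. You propose to control the diffusion contribution by the polynomial growth bound \eqref{eq3:poly_growth} together with independence of the increment. But \eqref{eq:Fhinv_growth} only gives $|F_\delta^{-1}(t+\delta,Z)| \le (1-L\delta)^{-1}(L\delta + |Z|)$, so \eqref{eq3:poly_growth} yields $|g^r(t+\delta,F_\delta^{-1}(t+\delta,Z))| \le C(1+|Z|^q)$ with $q>1$; after applying the It\=o isometry you are left with $\delta\,\E\big[|g^r(t+\delta,F_\delta^{-1}(t+\delta,Z))|^2\big] \le C\,\delta\,\E\big[(1+|Z|^q)^2\big]$, which may be infinite since $Z$ is only assumed to have finite second moments. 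So this route does not establish $\Psi^{\mathrm{SSBE}}(Z,t,\delta) \in L^2$, and the point is not a technicality: avoiding higher-moment requirements on the input is precisely what the C-stability framework is designed for.

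The paper closes this gap differently: having shown $\Psi^{\mathrm{SSBE}}(0,t,\delta)\in L^2$, it estimates the difference $\Psi^{\mathrm{SSBE}}(Z,t,\delta)-\Psi^{\mathrm{SSBE}}(0,t,\delta)$ and applies the key inequality \eqref{eq:stab} from Corollary~\ref{cor:homeomorph}, which exploits the global monotonicity condition \eqref{eq3:onesided} to show that the drift-inverse together with the diffusion terms evaluated at $F_\delta^{-1}$ satisfies a global Lipschitz-type bound, giving $\big\| \Psi^{\mathrm{SSBE}}(Z,t,\delta) - \Psi^{\mathrm{SSBE}}(0,t,\delta) \big\|^2_{L^2(\Omega;\R^d)} \le (1 + C_1 \delta)\, \| Z \|^2_{L^2(\Omega;\R^d)}$. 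In other words, the superlinear growth of $g^r$ is tamed not by crude growth bounds but by the structural coupling between $f$ and $g^r$ in \eqref{eq3:onesided}, transferred through $F_\delta^{-1}$. You should replace your polynomial-growth argument for the diffusion term with this comparison against $Z=0$ via \eqref{eq:stab}; the rest of your proposal then goes through.
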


\begin{proof}
  For the first assertion we only have to verify that $\Psi^{\mathrm{SSBE}}$ 
  satisfies \eqref{eq:Psicond}. For this we fix arbitrary $(t, \delta) \in
  \mathbb{T}$ and $Z \in L^2(\Omega,\F_t,\P;\R^d)$. Then, we obtain from
  Corollary~\ref{cor:homeomorph} that the mapping $F_\delta^{-1}(t+\delta,
  \cdot) \colon \R^d \to \R^d$ is a homeomorphism satisfying the linear growth
  bound \eqref{eq:Fhinv_growth}. Hence, we have 
  \begin{align*}
    F_\delta^{-1}(t+\delta, Z) \in L^2(\Omega,\F_t,\P;\R^d).
  \end{align*}
  Consequently, by the continuity of $g^r$ the mapping
  \begin{align*}
    \Omega \ni \omega \mapsto g^r(t+\delta, F_\delta^{-1}(t+\delta, Z(\omega)))
    \in \R^d
  \end{align*}
  is $\F_t / \B(\R^d)$-measurable for every $r = 1,\ldots,m$. Therefore,
  $\Psi^{\mathrm{SSBE}}(Z,t,\delta) \colon \Omega \to \R^d$ is a well-defined
  random variable, which is $\F_{t+\delta} / \B(\R^d)$-measurable. It remains
  to show that $\Psi^{\mathrm{SSBE}}(Z,t,\delta)$ is square integrable. 
  
  For this we first consider the case that $Z = 0 \in L^2(\Omega;\R^d)$. Then
  it is evident that $\Psi^{\mathrm{SSBE}}( 0, t,\delta) \in
  L^2(\Omega,\F_{t + \delta},\P;\R^d)$. In particular, it follows from
  \eqref{eq:Fhinv_growth} that 
  \begin{align*}
    \big\| \E \big[ \Psi^{\mathrm{SSBE}}( 0, t,\delta) | \F_{t} \big]
    \big\|_{L^2(\Omega;\R^d)} =
    \big| F_\delta^{-1}(t+\delta, 0) \big| \le (1 - L\delta)^{-1} L \delta 
    \le (1 - L \overline{h})^{-1} L \delta.
  \end{align*}
  Further, from an application of It\=o's isometry, \eqref{eq3:poly_growth} and
  \eqref{eq:Fhinv_growth} we get 
  \begin{align*}
    &\big\| \big( \id - \E [ \, \cdot\, | \F_{t} ] \big)
    \Psi^{\mathrm{SSBE}}( 0, t,\delta) \big\|_{L^2(\Omega;\R^d)}^2\\
    &\quad = \Big\| \sum_{r = 1}^m g^r(t+\delta, F_\delta^{-1}(t+\delta, 0) )
    \big( W^r(t+\delta) - W^r(t) \big) \Big\|^2_{L^2(\Omega;\R^d)}\\
    &\quad = \delta \sum_{r = 1}^m \big| g^r(t+\delta, F_\delta^{-1}(t+\delta,
    0) ) \big|^2\\
    &\quad \le L^2 m \delta \big( 1 + \big| F_\delta^{-1}(t+\delta,
    0) \big|^q \big)^2 \le L^2 m\big( 1 + (1 - L \overline{h})^{-q} L^q
    \overline{h}^q \big)^2 \delta.
  \end{align*}
  This verifies \eqref{eq:SSBEzero1} and \eqref{eq:SSBEzero2}. 

  Next, for arbitrary $Z \in L^2(\Omega;\F_t,\P;\R^d)$ we compute by similar
  arguments
  \begin{align*}
    &\big\| \Psi^{\mathrm{SSBE}}(Z,t,\delta) -  \Psi^{\mathrm{SSBE}}( 0,
    t,\delta) \big\|^2_{L^2(\Omega;\R^d)}\\
    &\quad = \big\|  F_\delta^{-1}(t+\delta, Z) - F_\delta^{-1}(t+\delta, 0)
    \big\|_{L^2(\Omega;\R^d)}^2 \\
    &\qquad + \delta \sum_{r = 1}^m \big\|
    g^r(t+\delta, F_\delta^{-1}(t+\delta, Z) ) -
    g^r(t+\delta, F_\delta^{-1}(t+\delta, 0) )
    \big\|_{L^2(\Omega;\R^d)}^2\\
    &\quad = \E \Big[ \big| F_\delta^{-1}(t+\delta, Z) -
    F_\delta^{-1}(t+\delta, 0) \big|^2 \\
    &\qquad + \delta \sum_{r = 1}^m \big|
    g^r(t+\delta, F_\delta^{-1}(t+\delta, Z) ) -
    g^r(t+\delta, F_\delta^{-1}(t+\delta, 0) )
    \big|^2  \Big].
  \end{align*}
  Thus, an application of \eqref{eq:stab} yields
  \begin{align*}
    &\big\| \Psi^{\mathrm{SSBE}}(Z,t,\delta) -  \Psi^{\mathrm{SSBE}}( 0,
    t,\delta) \big\|^2_{L^2(\Omega;\R^d)} \le (1 + C_1 \delta) \| Z
    \|^2_{L^2(\Omega;\R^d)}.    
  \end{align*}
  This completes the proof.
\end{proof}

\begin{theorem}
  \label{th:SSBEstab}
  Let the functions $f$ and $g^r$, $r = 1,\ldots,m$, satisfy
  Assumption~\ref{as:fg} with $L \in (0,\infty)$ and $\eta
  \in (1,\infty)$. Further, let 
  $\overline{h} \in (0, L^{-1})$. Then, for every $\xi \in
  L^2(\Omega,\F_0,\P;\R^d)$ the SSBE scheme
  $(\Psi^{\mathrm{SSBE}},\overline{h},\xi)$ is stochastically C-stable.
\end{theorem}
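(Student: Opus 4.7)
The plan is to reduce the C-stability inequality to a pointwise application of the estimate \eqref{eq:stab} from Corollary~\ref{cor:homeomorph}. Fix $(t,\delta) \in \mathbb{T}$ and $Y, Z \in L^2(\Omega,\F_t,\P;\R^d)$, and introduce the $\F_t$-measurable random variables
\begin{align*}
  D &:= F_\delta^{-1}(t+\delta, Y) - F_\delta^{-1}(t+\delta, Z), \\
  G^r &:= g^r(t+\delta, F_\delta^{-1}(t+\delta, Y)) - g^r(t+\delta, F_\delta^{-1}(t+\delta, Z)), \quad r = 1,\ldots,m.
\end{align*}
Note that both are well-defined $\F_t$-measurable random variables by the continuity of $F_\delta^{-1}(t+\delta,\cdot)$ and of $g^r(t+\delta,\cdot)$, and by arguments analogous to those in the proof of Proposition~\ref{prop:SSBE}. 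From the definition \eqref{eq:PsiSSBE} we have
\begin{align*}
  \Psi^{\mathrm{SSBE}}(Y,t,\delta) - \Psi^{\mathrm{SSBE}}(Z,t,\delta) = D + \sum_{r=1}^m G^r \, \Delta_\delta W^r(t).
\end{align*}

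Since $D$ and $G^r$ are $\F_t$-measurable and the increments $\Delta_\delta W^r(t)$ are independent of $\F_t$ with mean zero, I would identify
\begin{align*}
  \E\big[\Psi^{\mathrm{SSBE}}(Y,t,\delta) - \Psi^{\mathrm{SSBE}}(Z,t,\delta) \,\big|\, \F_t\big] &= D, \\
  \big(\id - \E[\,\cdot\,|\F_t]\big)\big(\Psi^{\mathrm{SSBE}}(Y,t,\delta) - \Psi^{\mathrm{SSBE}}(Z,t,\delta)\big) &= \sum_{r=1}^m G^r \, \Delta_\delta W^r(t).
\end{align*}
Taking $L^2$-norms and using the independence of the increments (or equivalently It\^o's isometry applied conditionally) yields the exact identity
\begin{align*}
  \Big\| \sum_{r=1}^m G^r \, \Delta_\delta W^r(t) \Big\|^2_{L^2(\Omega;\R^d)} = \delta \sum_{r=1}^m \E\big[ |G^r|^2 \big].
\end{align*}

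Now I would invoke the pointwise estimate \eqref{eq:stab} from Corollary~\ref{cor:homeomorph} with the substitutions $x_1 = Y(\omega)$, $x_2 = Z(\omega)$ and the time-point $t + \delta$. This gives, $\P$-almost surely,
\begin{align*}
  |D|^2 + \eta \delta \sum_{r=1}^m |G^r|^2 \le (1 + C_1 \delta) |Y - Z|^2.
\end{align*}
Taking expectations and combining with the identity above produces
\begin{align*}
  \big\| \E\big[\Psi^{\mathrm{SSBE}}(Y,t,\delta) - \Psi^{\mathrm{SSBE}}(Z,t,\delta)|\F_t\big]\big\|^2_{L^2(\Omega;\R^d)}
  + \eta \big\| \big(\id - \E[\,\cdot\,|\F_t]\big)\big(\Psi^{\mathrm{SSBE}}(Y,t,\delta) - \Psi^{\mathrm{SSBE}}(Z,t,\delta)\big) \big\|^2_{L^2(\Omega;\R^d)} \le (1 + C_1 \delta) \| Y - Z \|^2_{L^2(\Omega;\R^d)},
\end{align*}
which is precisely \eqref{eq:stab_cond1} with $C_{\mathrm{stab}} = C_1$ (as supplied by Corollary~\ref{cor:homeomorph}) and the same parameter $\eta \in (1,\infty)$ from Assumption~\ref{as:fg}.

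There is no real obstacle here: the work is already contained in Corollary~\ref{cor:homeomorph}, and the only point requiring care is the clean separation of the one-step map into an $\F_t$-measurable part and a martingale increment part, so that the conditional expectation and its orthogonal complement can be identified explicitly before applying the pointwise bound.
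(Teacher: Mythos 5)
Your proposal is correct and follows essentially the same route as the paper's proof: the same identification of the conditional expectation part with $F_\delta^{-1}(t+\delta,Y)-F_\delta^{-1}(t+\delta,Z)$ and of the orthogonal part with the martingale increment term, followed by the It\^o isometry and the pointwise estimate \eqref{eq:stab} from Corollary~\ref{cor:homeomorph}, yielding \eqref{eq:stab_cond1} with $C_{\mathrm{stab}} = C_1$.
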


\begin{proof}
  Let us consider arbitrary $(t,\delta) \in \mathbb{T}$
  and $Y, Z \in L^2(\Omega,\F_t,\P;\R^d)$. For the proof of
  \eqref{eq:stab_cond1} we first note that 
  \begin{align*}
    \E \big[ \Psi^{\mathrm{SSBE}}(Y,t,\delta) -
    \Psi^{\mathrm{SSBE}}(Z,t,\delta) | \F_{t} \big]
     =  F_{\delta}^{-1}(t + \delta,Y) -
    F_{\delta}^{-1}(t + \delta,Z)
  \end{align*}
  and
  \begin{align*}
    &\big( \id - \E [ \, \cdot \, | \F_{t} ] \big)
    \big( \Psi^{\mathrm{SSBE}}(Y,t,\delta) -
    \Psi^{\mathrm{SSBE}}(Z,t,\delta)  \big)\\
    &\quad = \sum_{r = 1}^m \big(
    g^r(t + \delta,F_{\delta}^{-1}(t + \delta,Y)) - 
    g^r(t + \delta,F_{\delta}^{-1}(t + \delta,Z)) \big) \Delta_\delta W^r(t).
  \end{align*}
  Then we obtain from \eqref{eq:stab}
  \begin{align*}
      &\big\| F_{\delta}^{-1}(t + \delta,Y) -
      F_{\delta}^{-1}(t + \delta,Z) \big\|_{L^2(\Omega;\R^d)}^2\\
      &\quad + \eta \Big\| \sum_{r = 1}^m \big(
      g^r(t + \delta,F_{\delta}^{-1}(t + \delta,Y)) - 
      g^r(t + \delta,F_{\delta}^{-1}(t + \delta,Z)) \big) \Delta_\delta W^r(t) 
      \Big\|^2_{L^2(\Omega;\R^d)} \\ 
      &= \E \Big[ \big| F_{\delta}^{-1}(t + \delta,Y) -
      F_{\delta}^{-1}(t + \delta,Z) \big|^2 \\
      &\qquad + \eta \delta \sum_{r = 1}^m \big|
      g^r(t + \delta,F_{\delta}^{-1}(t + \delta,Y)) - 
      g^r(t + \delta,F_{\delta}^{-1}(t + \delta,Z)) \big|^2 \Big]\\
      &\le (1 + C_1 \delta) \big\| Y - Z \big\|^2_{L^2(\Omega;\R^d)}.
  \end{align*}
  which is condition \eqref{eq:stab_cond1} for the SSBE method with
  $C_{\mathrm{stab}} = C_1 $.
\end{proof}

The following fact is a consequence of Theorem~\ref{th:SSBEstab} and
Corollary~\ref{cor:bound} together with \eqref{eq:SSBEzero1} and
\eqref{eq:SSBEzero2}. 

\begin{corollary}
  \label{cor:SSBEbound}
  Let the functions $f$ and $g^r$, $r = 1,\ldots,m$, satisfy
  Assumption~\ref{as:fg} with $L \in (0,\infty)$ and $\eta \in (1,\infty)$. Let
  $\overline{h} \in (0, L^{-1})$. Then, for every vector of step sizes $h \in
  (0,\overline{h}]^N$ it holds for the grid function $X_h$ generated by
  $(\Psi^{\mathrm{SSBE}},\overline{h},\xi)$ that 
  \begin{align*}
    \max_{n \in \{0,\ldots,N\}} \| X_h^{\mathrm{SSBE}}(t_n)
    \|_{L^2(\Omega;\R^d)} \le \ee^{C T} \Big(
    \| \xi \|^2_{L^2(\Omega;\R^d)} + C_0^2( 1 + \overline{h} + C_\eta ) T
    \Big)^{\frac{1}{2}},
  \end{align*}
  where the constant $C_0$ is the same as in
  Proposition~\ref{prop:SSBE}.
\end{corollary}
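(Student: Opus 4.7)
The plan is to deduce the corollary by directly invoking Corollary~\ref{cor:bound} for the split-step backward Euler scheme. Corollary~\ref{cor:bound} requires two hypotheses: that the one-step method $(\Psi^{\mathrm{SSBE}},\overline{h},\xi)$ is stochastically C-stable, and that the map $\Psi^{\mathrm{SSBE}}$ evaluated at zero admits the two bounds on its conditional expectation and its orthogonal complement, respectively of order $\delta$ and $\delta^{1/2}$. Both hypotheses have already been established earlier: C-stability is exactly the content of Theorem~\ref{th:SSBEstab}, while the two growth estimates at the origin are precisely \eqref{eq:SSBEzero1} and \eqref{eq:SSBEzero2} proved in Proposition~\ref{prop:SSBE}.

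Thus my proof will consist of three short lines. First, I cite Theorem~\ref{th:SSBEstab} to conclude that $(\Psi^{\mathrm{SSBE}},\overline{h},\xi)$ is stochastically C-stable, with the stability constant $C_{\mathrm{stab}} = C_1$ arising from \eqref{eq:stab} in Corollary~\ref{cor:homeomorph}. Second, I cite Proposition~\ref{prop:SSBE} to produce the constant $C_0$ (depending on $L$, $q$, $m$, $\overline{h}$) for which \eqref{eq:SSBEzero1} and \eqref{eq:SSBEzero2} hold uniformly in $(t,\delta)\in\mathbb{T}$. Third, an application of Corollary~\ref{cor:bound} yields, for every vector of step sizes $h \in (0,\overline{h}]^N$, the asserted bound
\[
\max_{n \in \{0,\ldots,N\}} \| X_h^{\mathrm{SSBE}}(t_n) \|_{L^2(\Omega;\R^d)} \le \ee^{C T} \Big( \| \xi \|^2_{L^2(\Omega;\R^d)} + C_0^2( 1 + \overline{h} + C_\eta ) T \Big)^{\frac{1}{2}},
\]
with $C$ depending only on $C_{\mathrm{stab}}$ and $\overline{h}$ (inherited from the exponential factor in Corollary~\ref{cor:bound}).

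There is essentially no obstacle here: all the analytic work, namely the homeomorphism property and monotonicity bound for $F_\delta^{-1}$ (Corollary~\ref{cor:homeomorph}), the measurability and integrability of $\Psi^{\mathrm{SSBE}}$, and the abstract Gronwall-type stability lemma (Lemma~\ref{lem:stab}), has been completed in preceding sections. The corollary is simply the packaging of these ingredients, and the only verification needed is that the hypotheses match cleanly, which they do since the parameter $\eta \in (1,\infty)$ required by C-stability is exactly the regime assumed here.
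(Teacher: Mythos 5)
Your proposal is correct and follows exactly the paper's route: the paper itself states the corollary as a direct consequence of Theorem~\ref{th:SSBEstab} (C-stability), the bounds \eqref{eq:SSBEzero1}--\eqref{eq:SSBEzero2} from Proposition~\ref{prop:SSBE}, and Corollary~\ref{cor:bound}, which is precisely your three-line argument. Nothing is missing, and your check that the parameter regime $\eta \in (1,\infty)$ matches the C-stability hypothesis is the only verification required.
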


In preparation of the proof of consistency we state the following result on the
H\"older continuity of the exact solution to \eqref{sode} with respect to the
norm in $L^p(\Omega;\R^d)$.

\begin{prop}
  \label{prop:Hoelder}
  Let $f$ and $g^r$, $r = 1,\ldots,m$, satisfy
  Assumption~\ref{as:fg} with $L \in (0,\infty)$ and $q \in (1,\infty)$.
  For every $p \in [2,\infty)$ with $\sup_{t \in [0,T]}
  \|X(t)\|_{L^{pq}(\Omega;\R^d)} < \infty$ there exists a constant $C$ such
  that 
  \begin{align*}
    \big\| X(t_1) - X(t_2) \big\|_{L^p(\Omega;\R^d)} \le C \big(1 +
    \sup_{t \in [0,T]} \|X(t)\|_{L^{pq}(\Omega;\R^d)}^q \big)
    | t_1 - t_2|^{\frac{1}{2}}
  \end{align*}
  for all $t_1, t_2 \in [0,T]$, where $X$ denotes the exact solution to
  \eqref{sode}.
\end{prop}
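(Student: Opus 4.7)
The plan is to represent the increment $X(t_2)-X(t_1)$ (assuming $t_1<t_2$ without loss of generality) via the mild integral formulation \eqref{exact}, namely
\[
X(t_2)-X(t_1)=\int_{t_1}^{t_2} f(s,X(s))\diff{s}+\sum_{r=1}^{m}\int_{t_1}^{t_2} g^r(s,X(s))\diff{W^r(s)},
\]
and then estimate the drift and diffusion integrals separately in the norm $\|\cdot\|_{L^p(\Omega;\R^d)}$ using the polynomial growth bound \eqref{eq3:poly_growth}.

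For the drift integral, I would apply Minkowski's integral inequality to pass $\|\cdot\|_{L^p(\Omega;\R^d)}$ under the Lebesgue integral, then use Cauchy--Schwarz in time to obtain a factor $|t_2-t_1|^{1/2}$ (with an additional harmless factor $\sqrt{T}$ absorbed into $C$). For the diffusion integral I would invoke the Burkholder--Davis--Gundy inequality, which gives
\[
\Bigl\|\int_{t_1}^{t_2} g^r(s,X(s))\diff{W^r(s)}\Bigr\|_{L^p(\Omega;\R^d)}\le C_p\,|t_2-t_1|^{1/2}\sup_{s\in[0,T]}\|g^r(s,X(s))\|_{L^p(\Omega;\R^d)}.
\]
In both estimates, \eqref{eq3:poly_growth} together with the elementary inequality $(1+|x|^q)^p\le 2^{p-1}(1+|x|^{pq})$ yields the pointwise bound
\[
\|f(s,X(s))\|_{L^p(\Omega;\R^d)}\vee\|g^r(s,X(s))\|_{L^p(\Omega;\R^d)}\le C\bigl(1+\|X(s)\|_{L^{pq}(\Omega;\R^d)}^{q}\bigr),
\]
which is exactly where the assumption $\sup_{t\in[0,T]}\|X(t)\|_{L^{pq}(\Omega;\R^d)}<\infty$ enters and explains the appearance of $L^{pq}$ rather than $L^p$ in the hypothesis.

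Combining the three estimates via the triangle inequality yields the claim with a constant $C$ depending only on $L$, $p$, $m$, and $T$. The only nontrivial ingredient is the BDG inequality; everything else is bookkeeping with the growth assumption \eqref{eq3:poly_growth}, so I do not anticipate any essential obstacle.
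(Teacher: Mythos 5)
Your proposal is correct and follows essentially the same route as the paper: decompose the increment via the integral equation \eqref{exact}, bound the drift term using \eqref{eq3:poly_growth} together with the moment assumption (the paper simply absorbs the resulting factor $|t_1-t_2|$ into $\sqrt{T}\,|t_1-t_2|^{1/2}$ rather than invoking Cauchy--Schwarz in time), and bound the stochastic integral with the Burkholder--Davis--Gundy inequality plus the same growth estimate. The minor differences (applying BDG to each $g^r$ separately instead of to the sum, which changes $m^{1/2}$ to $m$ in the constant) are immaterial.
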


\begin{proof}
  Let $0 \le t_1 < t_2 \le T$. After inserting \eqref{exact} we get
  \begin{align*}
    \big\| X(t_1) - X(t_2) \big\|_{L^p(\Omega;\R^d)} &\le
    \int_{t_1}^{t_2} \big\| f(\tau,X(\tau)) \big\|_{L^p(\Omega;\R^d)}
    \diff{\tau} \\
    &\quad + \Big\| \sum_{r = 1}^m \int_{t_1}^{t_2} g^r(\tau,X(\tau))
    \diff{W^r(\tau)} \Big\|_{L^p(\Omega;\R^d)}.
  \end{align*}
  For the drift integral it follows from \eqref{eq3:poly_growth} that
  \begin{align*}
    \int_{t_1}^{t_2} \big\| f(\tau,X(\tau)) \big\|_{L^p(\Omega;\R^d)}
    \diff{\tau} \le L \big( 1 + \sup_{\tau \in [0,T]} \| X(\tau)
    \|_{L^{pq}(\Omega;\R^d)}^q \big) |t_1 - t_2|.
  \end{align*}
  In addition, the Burkholder-Davis-Gundy inequality yields
  \begin{align*}
    \Big\| \sum_{r = 1}^m \int_{t_1}^{t_2} g^r(\tau,X(\tau))
    \diff{W^r(\tau)} \Big\|_{L^p(\Omega;\R^d)} \le C \Big( \sum_{r = 1}^m
    \int_{t_1}^{t_2} \big\| g^r(\tau,X(\tau)) \big\|^2_{L^p(\Omega;\R^d)}
    \diff{\tau} \Big)^{\frac{1}{2}}
  \end{align*}
  for a constant $C = C(p)$. Then, we deduce from \eqref{eq3:poly_growth} that
  \begin{align*}
    \big\| g^r(\tau,X(\tau)) \big\|_{L^p(\Omega;\R^d)} \le
    L \big( 1  + \sup_{\tau \in [0,T]} \big\| X(\tau)
    \big\|_{L^{pq}(\Omega;\R^d)}^q \big). 
  \end{align*}
  Therefore, it holds
  \begin{align*}
    &\Big\| \sum_{r = 1}^m \int_{t_1}^{t_2} g^r(\tau,X(\tau))
    \diff{W^r(\tau)} \Big\|_{L^p(\Omega;\R^d)}\\
    &\quad \le C L m^{\frac{1}{2}} \big( 1  + \sup_{\tau \in [0,T]} \big\|
    X(\tau) \big\|_{L^{pq}(\Omega;\R^d)}^q \big) | t_1 - t_2 |^{\frac{1}{2}}.
  \end{align*}
  This completes the proof.  
\end{proof}

The following two lemmas contain estimates, which play important roles in the 
proofs of consistency for the SSBE scheme and the PEM method.

\begin{lemma}
  \label{lem:cons1} 
  Let Assumption~\ref{as:fg} be satisfied by $f$ and $g^r$, $r = 1,\ldots,m$,
  with $L \in (0,\infty)$ and $q \in (1,\infty)$.
  Further, let the exact solution $X$ to \eqref{sode} 
  satisfy $\sup_{t \in [0,T]} \| X(t) \|_{L^{4q-2}(\Omega;\R^d)} <
  \infty$. Then, there exists a constant $C$ such that for all $t_1, t_2 \in
  [0,T]$ and $s_1, s_2 \in [t_1,t_2]$ it holds
  \begin{align*}
    &\int_{t_1}^{t_2} \big\| f(\tau,X(\tau)) - f(s_1,X(s_2))
    \big\|_{L^2(\Omega;\R^d)} \diff{\tau}\\
    & \quad \le C \big( 1 + \sup_{t \in [0,T]} \big\| X(t)
    \big\|_{L^{4q-2}(\Omega;\R^d)}^{2q-1} \big) |t_1 - t_2|^{\frac{3}{2}}.
  \end{align*}
\end{lemma}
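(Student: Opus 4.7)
The plan is to bound the integrand pointwise by quantities of order $|t_1-t_2|^{1/2}$, after which the result follows by integrating a constant over an interval of length $|t_1 - t_2|$. I would first split via the triangle inequality,
\begin{align*}
  &\| f(\tau,X(\tau)) - f(s_1,X(s_2)) \|_{L^2(\Omega;\R^d)}\\
  &\quad \le \| f(\tau,X(\tau)) - f(s_1,X(\tau)) \|_{L^2(\Omega;\R^d)}
  + \| f(s_1,X(\tau)) - f(s_1,X(s_2)) \|_{L^2(\Omega;\R^d)},
\end{align*}
treating the time-shift and the state-shift separately.

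For the time-shift term I would use \eqref{eq3:loc_Lip_t} to bound the integrand by $L(1+|X(\tau)|^q)|\tau-s_1|^{1/2}$, and then take the $L^2$-norm: since $\tau,s_1 \in [t_1,t_2]$, this gives a factor $|t_1-t_2|^{1/2}$, while $\|X(\tau)\|_{L^{2q}}^q$ is controlled by $\sup_t\|X(t)\|_{L^{4q-2}}^q$ (using $2q\le 4q-2$ for $q\ge 1$, Lyapunov's inequality, and $a^q \le 1 + a^{2q-1}$ for $a \ge 0$).

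The main obstacle — and the step that dictates the assumption $\sup_t \|X(t)\|_{L^{4q-2}} < \infty$ — is the state-shift term. Here \eqref{eq3:loc_Lip} gives the pointwise bound
\begin{align*}
  | f(s_1,X(\tau)) - f(s_1,X(s_2)) | \le L\bigl(1 + |X(\tau)|^{q-1} + |X(s_2)|^{q-1}\bigr)\,|X(\tau) - X(s_2)|.
\end{align*}
I would then apply Hölder's inequality in $L^2(\Omega;\R^d)$ with the carefully chosen conjugate pair $p := \frac{2(2q-1)}{q-1}$ and $r := \frac{2(2q-1)}{q}$ (for which $\tfrac1p + \tfrac1r = \tfrac12$). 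This choice is dictated by matching moment exponents: the factor $(1 + |X(\tau)|^{q-1} + |X(s_2)|^{q-1})$ requires control in $L^{p(q-1)} = L^{4q-2}$, while the application of Proposition~\ref{prop:Hoelder} with exponent $r$ requires $\sup_t\|X(t)\|_{L^{rq}} = \sup_t\|X(t)\|_{L^{4q-2}}$, both of which are assumed finite. Moreover, $r \ge 2$ since $q \ge 1$, so Proposition~\ref{prop:Hoelder} is applicable and delivers the factor $|\tau - s_2|^{1/2} \le |t_1-t_2|^{1/2}$.

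Combining the two bounds and using $(1 + a^{q-1})(1 + a^q) \le 2(1 + a^{2q-1})$ for $a\ge 0$ collapses all moment powers to $2q-1$, yielding
\begin{align*}
  \| f(\tau,X(\tau)) - f(s_1,X(s_2)) \|_{L^2(\Omega;\R^d)} \le C\bigl(1 + \sup_{t\in[0,T]}\|X(t)\|_{L^{4q-2}(\Omega;\R^d)}^{2q-1}\bigr) |t_1-t_2|^{\tfrac12},
\end{align*}
uniformly in $\tau \in [t_1,t_2]$. Integrating over $\tau$ contributes an additional factor $|t_1-t_2|$, producing the claimed exponent $3/2$.
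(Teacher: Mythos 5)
Your proposal is correct and follows essentially the same route as the paper's proof: a split into a state-shift term handled via \eqref{eq3:loc_Lip}, H\"older's inequality (your exponents $p=\tfrac{2(2q-1)}{q-1}$, $r=\tfrac{2(2q-1)}{q}$ coincide with the paper's $2\rho'$, $2\rho$), and Proposition~\ref{prop:Hoelder}, plus a time-shift term handled via \eqref{eq3:loc_Lip_t}, followed by integration over $\tau$. The only difference is the intermediate point ($f(s_1,X(\tau))$ instead of $f(\tau,X(s_2))$), which is immaterial since \eqref{eq3:loc_Lip} and \eqref{eq3:loc_Lip_t} are uniform in $t$ and $x$ respectively.
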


\begin{proof}
  It follows from \eqref{eq3:loc_Lip_t} and \eqref{eq3:loc_Lip} that 
  \begin{align*}
      \big| f(\tau_1,x_1) - f(\tau_2,x_2) \big| &\le \big| f(\tau_1,x_1) -
      f(\tau_1,x_2)  \big| + \big| f(\tau_1,x_2) - f(\tau_2,x_2) \big|\\
      &\le L \big( 1 + |x_1|^{q-1} + |x_2|^{q-1}  \big) |x_1 - x_2 |
      + L \big( 1 + |x_2|^q \big) |\tau_1 - \tau_2 |^{\frac{1}{2}}
  \end{align*}
  for all $\tau_1,\tau_2 \in [0,T]$ and $x_1, x_2 \in \R^d$. By
  an additional application of H\"older's inequality with exponents $\rho = 2 -
  \frac{1}{q}$ and $\rho' = \frac{2q - 1}{q-1}$ we get for all
  $s_1, s_2, \tau \in [t_1, t_2]$
  \begin{align}
    \label{eq:term6}
    \begin{split}
      &\big\|  f(\tau,X(\tau)) - f(s_1,X(s_2)) \big\|_{L^2(\Omega;\R^d)}\\
      &\quad \le L  \big\|  \big( 1 + |X(\tau)|^{q-1} +
      |X(s_2)|^{q-1}  \big) | X(\tau) - X(s_2) | \big\|_{L^2(\Omega;\R)}\\
      &\qquad + L \big\| \big( 1 + |X(s_2)|^q \big) |\tau - s_1|^{\frac{1}{2}}
      \big\|_{L^2(\Omega;\R^d)}\\
      &\quad \le L  \big( 1 + 2 \sup_{t \in [0,T]}
      \|X(t)\|^{q-1}_{L^{2\rho'(q-1)}(\Omega;\R^d)}  \big) \| X(\tau) -
      X(s_2) \|_{L^{2 \rho}(\Omega;\R^d)}\\
      &\qquad +  L \big( 1 + \sup_{t \in [0,T]}
      \|X(t)\|^{q}_{L^{2q}(\Omega;\R^d)}  \big) |t_1 -
      t_2|^{\frac{1}{2}}.
    \end{split}
  \end{align}
  Observe that $2 \rho' (q-1) = 4q -2$ and $2q \le 4q - 2$ for $q \in
  (1,\infty)$. Moreover,
  Proposition~\ref{prop:Hoelder} with $p = 2\rho$ yields 
  \begin{align*}
    \| X(\tau) - X(s_2) \|_{L^{2 \rho}(\Omega;\R^d)} &\le C \big(1 +
    \sup_{t \in [0,T]} \|X(t)\|_{L^{2 \rho q}(\Omega;\R^d)}^q \big)
    | \tau - s_2|^{\frac{1}{2}}\\
    &\le C \big(1 + \sup_{t \in [0,T]} \|X(t)\|_{L^{4 q-2}(\Omega;\R^d)}^q
    \big) |t_1 - t_2 |^{\frac{1}{2}}.
  \end{align*}
  Altogether, this proves
  \begin{align*}
    \big\|  f(\tau,X(\tau)) - f(s_1,X(s_2)) \big\|_{L^2(\Omega;\R^d)}
    \le C \big(1 + \sup_{t \in [0,T]} \|X(t)\|_{L^{4
    q-2}(\Omega;\R^d)}^{2q-1} \big) |t_1 - t_2 |^{\frac{1}{2}}
  \end{align*}
  for all $s_1,s_2,\tau \in [t_1, t_2]$. After integrating over $\tau \in
  [t_1,t_2]$ the proof is completed. 
\end{proof}

\begin{lemma}
  \label{lem:cons2} 
  Let Assumption~\ref{as:fg} be satisfied by $f$ and $g^r$, $r = 1,\ldots,m$.
  Further, let the exact solution $X$ to \eqref{sode} 
  satisfy $\sup_{\tau \in [0,T]} \| X(\tau) \|_{L^{4q-2}(\Omega;\R^d)} <
  \infty$. Then, there exists a constant $C$ such that for all $t_1, t_2,s \in
  [0,T]$ with $0 \le t_1 \le s \le t_2 \le T$ it holds
  \begin{align*}
    &\Big\| \sum_{r = 1}^m \int_{t_1}^{t_2} \big(
    g^r(\tau,X(\tau)) - g^r(s,X(t_1)) \big) \diff{W^r(\tau)}
    \Big\|_{L^2(\Omega;\R^d)}\\
    & \quad \le C m^{\frac{1}{2}} \big( 1 + \sup_{\tau \in [0,T]} \big\| X(\tau)
    \big\|_{L^{4q-2}(\Omega;\R^d)}^{2q-1} \big) |t_1 - t_2|.
  \end{align*}
\end{lemma}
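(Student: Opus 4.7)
The plan is to proceed in close analogy with the proof of Lemma~\ref{lem:cons1}, the key new ingredient being that we first apply It\^o's isometry to convert the stochastic integral into a deterministic time integral of $L^2$-norms. Since the Brownian motions $W^r$, $r=1,\ldots,m$, are independent and the integrand is $(\F_\tau)$-adapted, It\^o's isometry gives
\begin{align*}
  &\Big\| \sum_{r = 1}^m \int_{t_1}^{t_2} \big( g^r(\tau,X(\tau)) -
  g^r(s,X(t_1)) \big) \diff{W^r(\tau)} \Big\|_{L^2(\Omega;\R^d)}^2\\
  &\qquad = \sum_{r = 1}^m \int_{t_1}^{t_2} \big\| g^r(\tau,X(\tau)) -
  g^r(s,X(t_1)) \big\|_{L^2(\Omega;\R^d)}^2 \diff{\tau}.
\end{align*}

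Next I would estimate the integrand pointwise in $\tau$. Inserting an intermediate term $g^r(s,X(\tau))$ and applying the triangle inequality splits the difference into a temporal part and a spatial part. For the temporal part I use \eqref{eq3:loc_Lip_t} to bound $|g^r(\tau,X(\tau)) - g^r(s,X(\tau))|$ by $L(1+|X(\tau)|^q)|\tau - s|^{1/2}$, and for the spatial part I use \eqref{eq3:loc_Lip} to bound $|g^r(s,X(\tau)) - g^r(s,X(t_1))|$ by $L(1+|X(\tau)|^{q-1}+|X(t_1)|^{q-1})|X(\tau)-X(t_1)|$. Since $t_1 \le s,\tau \le t_2$, both $|\tau - s|$ and $|\tau - t_1|$ are bounded by $|t_1 - t_2|$.

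Taking $L^2(\Omega;\R^d)$-norms, the temporal contribution is immediately controlled by $L(1+\sup_{t\in[0,T]}\|X(t)\|_{L^{2q}(\Omega;\R^d)}^q)|t_1-t_2|^{1/2}$. For the spatial contribution, H\"older's inequality with exponents $\rho = 2-\tfrac{1}{q}$ and $\rho' = \tfrac{2q-1}{q-1}$ separates the polynomial weight from the increment of $X$, exactly as in \eqref{eq:term6} of the previous lemma, yielding a factor $\|X(\tau)-X(t_1)\|_{L^{2\rho}(\Omega;\R^d)}$ times a moment-bound prefactor. Proposition~\ref{prop:Hoelder} applied with $p=2\rho$ then bounds this increment by $C(1+\sup_{t\in[0,T]}\|X(t)\|_{L^{4q-2}(\Omega;\R^d)}^q)|\tau-t_1|^{1/2}$, noting that $2\rho q = 4q - 2$. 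Combining the two contributions gives
\begin{align*}
  \big\| g^r(\tau,X(\tau)) - g^r(s,X(t_1)) \big\|_{L^2(\Omega;\R^d)} \le C \big(1 + \sup_{t \in [0,T]} \|X(t)\|_{L^{4q-2}(\Omega;\R^d)}^{2q-1}\big) |t_1 - t_2|^{\frac{1}{2}}.
\end{align*}

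Finally, I square this pointwise bound, integrate over $\tau \in [t_1,t_2]$, and sum over $r$, which produces an additional factor $m \, |t_1-t_2|$. Taking the square root yields the claimed bound with an overall factor $m^{1/2}|t_1-t_2|$. There is no serious obstacle here; the only care required is in the bookkeeping of the exponents in H\"older's inequality so that the moment order $4q-2$ matches the one used in Lemma~\ref{lem:cons1}, ensuring that both consistency estimates can be combined under the same moment assumption on $X$.
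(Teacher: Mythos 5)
Your argument is correct and follows essentially the same route as the paper: It\^o's isometry to reduce to a deterministic time integral, then the same splitting into temporal and spatial increments via \eqref{eq3:loc_Lip_t} and \eqref{eq3:loc_Lip}, H\"older with the exponents $\rho = 2-\tfrac{1}{q}$, $\rho' = \tfrac{2q-1}{q-1}$, and Proposition~\ref{prop:Hoelder}, exactly as in the estimate \eqref{eq:term6}. The only cosmetic difference is the choice of intermediate point ($g^r(s,X(\tau))$ rather than the spatial increment at time $\tau$ first), which changes nothing in the bounds.
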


\begin{proof}
  By the It\=o isometry we get
  \begin{align*}
    &\Big\| \sum_{r = 1}^m \int_{t_1}^{t_2} \big(
    g^r(\tau,X(\tau)) - g^r(s,X(t_1)) \big) \diff{W^r(\tau)}
    \Big\|_{L^2(\Omega;\R^d)}\\
    &\quad = \Big( \sum_{r = 1}^m \int_{t_1}^{t_2} \big\|
    g^r(\tau,X(\tau)) - g^r(s,X(t_1)) \big\|^2_{L^2(\Omega;\R^d)} \diff{\tau}
    \Big)^{\frac{1}{2}}.
  \end{align*}
  Then, the integrands are estimated in the same way as in \eqref{eq:term6} by
  \begin{align*}
    &\big\| g^r(\tau,X(\tau)) - g^r(s,X(t_1)) \big\|_{L^2(\Omega;\R^d)} \\
    &\quad \le L  \big( 1 + 2 \sup_{t \in [0,T]}
    \|X(t)\|^{q-1}_{L^{2 \rho'(q-1)}(\Omega;\R^d)}  \big) \| X(\tau) - X(t_1)
    \|_{L^{2\rho}(\Omega;\R^d)}\\
    &\qquad + L \big( 1 + \sup_{t \in [0,T]}
    \|X(t)\|^{q}_{L^{2q}(\Omega;\R^d)}  \big) |\tau - s |^{\frac{1}{2}}\\
    &\quad \le C \big( 1 + \sup_{t \in [0,T]} \|X(t)\|^{2q-1}_{L^{4
    q-2}(\Omega;\R^d)} \big) |t_1 - t_2|^{\frac{1}{2}},
  \end{align*}
  where we again made use of the $\frac{1}{2}$-H\"older continuity of the exact
  solution.  
\end{proof}

The next theorem finally investigates the B-consistency of the SSBE method.

\begin{theorem}
  \label{th:SSBEcons}
  Let the functions $f$ and $g^r$, $r = 1,\ldots,m$, satisfy
  Assumption~\ref{as:fg} with $L \in (0,\infty)$ and $q \in (1,\infty)$. Let
  $\overline{h} \in (0,L^{-1})$. If the exact solution $X$ to \eqref{sode}
  satisfies $\sup_{\tau \in [0,T]} \| X(\tau) \|_{L^{4q-2}(\Omega;\R^d)} <
  \infty$, then the split-step backward Euler method
  $(\Psi^{\mathrm{SSBE}},\overline{h},X_0)$ is stochastically B-consistent of
  order $\gamma = \frac{1}{2}$.  
\end{theorem}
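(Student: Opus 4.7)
The plan is to decompose the local truncation error $X(t+\delta) - \Psi^{\mathrm{SSBE}}(X(t),t,\delta)$ into a drift-type integral and a stochastic integral, and to exploit the $\F_t$-measurable auxiliary variable $Y_\delta := F_\delta^{-1}(t+\delta,X(t))$. By the very definition of $F_\delta$ we have $Y_\delta = X(t) + \delta f(t+\delta, Y_\delta)$, hence
\begin{align*}
  \Psi^{\mathrm{SSBE}}(X(t),t,\delta) = X(t) + \delta f(t+\delta, Y_\delta) + \sum_{r=1}^m g^r(t+\delta, Y_\delta) \Delta_\delta W^r(t).
\end{align*}
Combined with the integral equation \eqref{exact}, this yields
\begin{align*}
  X(t+\delta) - \Psi^{\mathrm{SSBE}}(X(t),t,\delta) &= \int_t^{t+\delta} \bigl[ f(\tau,X(\tau)) - f(t+\delta, Y_\delta) \bigr] \diff{\tau} \\
  &\quad + \sum_{r=1}^m \int_t^{t+\delta} \bigl[ g^r(\tau,X(\tau)) - g^r(t+\delta, Y_\delta) \bigr] \diff{W^r(\tau)}.
\end{align*}

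For the conditional expectation condition \eqref{eq:cons_cond1} with $\gamma = \tfrac{1}{2}$, the stochastic integral is a martingale increment and vanishes under $\E[\cdot|\F_t]$. I would split the remaining drift difference as
\begin{align*}
  f(\tau,X(\tau)) - f(t+\delta, Y_\delta) = \bigl[ f(\tau,X(\tau)) - f(t+\delta, X(t)) \bigr] + \bigl[ f(t+\delta, X(t)) - f(t+\delta, Y_\delta) \bigr].
\end{align*}
The first bracket is handled directly by Lemma~\ref{lem:cons1} with $s_1 = t+\delta$, $s_2 = t$, yielding an $L^2$-bound of order $\delta^{3/2}$. For the second bracket, the local Lipschitz bound \eqref{eq3:loc_Lip} combined with estimate \eqref{eq:1ord} gives $|f(t+\delta,X(t)) - f(t+\delta,Y_\delta)| \le C\delta(1+|X(t)|^{2q-1})$ pointwise, which after integration over $[t,t+\delta]$ contributes an even smaller $O(\delta^2)$ term thanks to the moment assumption $\sup_{t \in [0,T]} \|X(t)\|_{L^{4q-2}(\Omega;\R^d)} < \infty$.

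For the orthogonal part \eqref{eq:cons_cond2} with $\gamma = \tfrac{1}{2}$, I would again treat the drift and diffusion contributions separately. The drift integral, estimated as above, already satisfies the stronger bound $O(\delta^{3/2})$, so the projection $(\id - \E[\cdot|\F_t])$ applied to it is again of order $\delta^{3/2}$. For the stochastic integral, I would split
\begin{align*}
  g^r(\tau,X(\tau)) - g^r(t+\delta, Y_\delta) &= \bigl[ g^r(\tau,X(\tau)) - g^r(t+\delta, X(t)) \bigr] \\
  &\quad + \bigl[ g^r(t+\delta, X(t)) - g^r(t+\delta, Y_\delta) \bigr].
\end{align*}
The first bracket leads to an $L^2(\Omega)$-bound of order $\delta$ by Lemma~\ref{lem:cons2} with $s = t+\delta$ and $t_1 = t$. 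The second bracket is $\F_t$-measurable, so pulling it outside the stochastic integral, applying It\^o's isometry (or direct computation), and using \eqref{eq3:loc_Lip}, \eqref{eq:1ord} together with the $L^{4q-2}$-moment bound yields an $L^2$-norm of order $\delta^{3/2}$, which is absorbed into the target $O(\delta)$ bound.

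The main technical obstacle is the control of $Y_\delta - X(t)$ and the interplay between the polynomial growth of $f$ and $g^r$ and the $L^{4q-2}$-moment bounds on the exact solution. This is precisely what Lemma~\ref{lem:est_homeo} and Corollary~\ref{cor:homeomorph} were prepared for: estimate \eqref{eq:1ord} provides the crucial $\delta(1+|X(t)|^q)$ bound, and the growth bound \eqref{eq:Fhinv_growth} keeps the $L^{4q-2}$-moments of $Y_\delta$ uniformly controlled. Once these ingredients are in place, the proof reduces to careful bookkeeping of powers of $\delta$ and moments of $X(t)$.
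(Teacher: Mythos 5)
Your proposal is correct and follows essentially the same route as the paper: the same decomposition of the local error into the drift time-regularity term (Lemma~\ref{lem:cons1}), the stochastic-integral term (Lemma~\ref{lem:cons2}), and the implicit-step correction involving $F_\delta^{-1}(t+\delta,X(t))$, with the martingale increments vanishing under $\E[\,\cdot\,|\F_t]$. The only cosmetic difference is that where you bound $\delta\,|f(t+\delta,X(t))-f(t+\delta,Y_\delta)|$ directly via \eqref{eq3:loc_Lip}, \eqref{eq:Fhinv_growth}, and \eqref{eq:1ord}, the paper invokes the prepackaged estimate \eqref{eq:2ord} of Lemma~\ref{lem:est_homeo}, whose proof is exactly that argument.
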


\begin{proof}
  Let $(t,\delta) \in \mathbb{T}$ be arbitrary. First we insert \eqref{exact}
  and \eqref{eq:PsiSSBE} and obtain
  \begin{align*}
    &X(t + \delta) - \Psi^{\mathrm{SSBE}} (X(t),t,\delta)
    = \int_{t}^{t + \delta} \big( f(\tau,X(\tau)) - f(t + \delta,X(t))
    \big) \diff{\tau}\\
    &\qquad + X(t) + \delta f(t + \delta,X(t)) -
    F_{\delta}^{-1}(t+\delta,X(t))\\
    &\qquad + \sum_{r = 1}^m \int_{t}^{t + \delta} \big( g^r(\tau,X(\tau)) -
    g^r(t + \delta,X(t)) \big) \diff{W^r(\tau)}\\
    &\qquad +\sum_{r = 1}^m \big( g^r(t+ \delta,X(t)) -
    g^r(t+\delta,F_{\delta}^{-1}(t + \delta,X(t))) \big) \Delta_\delta W^r(t).
  \end{align*}
  For the proof of \eqref{eq:cons_cond1} we therefore have to estimate
  \begin{align}
    \label{eq:term4}
    \begin{split}
      &\big\| \E \big[ X(t + \delta) -\Psi^{\mathrm{SSBE}} (X(t),t,\delta) |
      \F_{t} \big] \big\|_{L^2(\Omega;\R^d)}\\
      &\quad \le \int_{t}^{t + \delta} \big\| \E \big[ f(\tau,X(\tau)) -
      f(t + \delta,X(t)) | \F_{t} \big] \big\|_{L^2(\Omega;\R^d)}
      \diff{\tau}\\
      &\qquad + \big\| X(t) + \delta f(t+\delta,X(t)) -
      F_{\delta}^{-1}(t + \delta,X(t)) \big\|_{L^2(\Omega;\R^d)}.
    \end{split}
  \end{align}
  Together with the inequality $\| \E[ Y | \F_t ] \|_{L^2(\Omega;\R^d)} \le \| Y
  \|_{L^2(\Omega;\R^d)}$ for all $Y \in L^2(\Omega;\R^d)$ it follows 
  from Lemma~\ref{lem:cons1} that   
  \begin{align*}
    \int_{t}^{t + \delta} \big\| \E \big[ f(\tau,X(\tau)) - f(t + \delta,X(t))
    | \F_{t} \big] \big\|_{L^2(\Omega;\R^d)} \diff{\tau}
    \le C_{\mathrm{cons}} \delta^{\frac{3}{2}}
  \end{align*}
  for a constant $C_{\mathrm{cons}}$ depending on $L$, $q$, $m$, and
  $\sup_{\tau \in [0,T]} \|X(\tau)\|^{2q-1}_{L^{4q-2}(\Omega;\R^d)}$. In order
  to complete the 
  proof 
  of \eqref{eq:cons_cond1} we need to show a similar estimate of the second
  term in \eqref{eq:term4}. In fact, it follows from \eqref{eq:2ord} that
  \begin{align*}
    &\big\| X(t) + \delta f(t + \delta,X(t)) -
    F_{\delta}^{-1}(t + \delta,X(t)) \big\|_{L^2(\Omega;\R^d)}\\
    &\quad \le C_3 \delta^{2} \big\| 1 + | X(t) |^{2q-1}
    \big\|_{L^2(\Omega;\R)} 
    \le C_3 \delta^2 \Big(1 + \sup_{\tau \in [0,T]} \| X(\tau)
    \|_{L^{4q - 2}(\Omega;\R^d)}^{2q-1} \Big).
  \end{align*}
  This completes the proof of \eqref{eq:cons_cond1} with $\gamma =
  \frac{1}{2}$ and we turn our attention to the proof of \eqref{eq:cons_cond2}.
  For this we need to estimate the following three terms
  \begin{align}
    \label{eq:term5}
    \begin{split}
      &\big\| \big( \id - \E [ \, \cdot \, | \F_{t} ] \big)
      \big( X(t + \delta) - \Psi^{\mathrm{SSBE}}(X(t),t,\delta) \big)
      \big\|_{L^2(\Omega;\R^d)} \\
      &\quad \le \int_{t}^{t + \delta} \big\| \big( \id - \E [ \, \cdot
      \, | \F_{t} ] \big) \big( f(\tau,X(\tau))  - f(t+\delta, X(t)) \big) 
      \big\|_{L^2(\Omega;\R^d)} \diff{\tau}\\
      &\qquad + \Big\| \sum_{r = 1}^m \int_{t}^{t + \delta} \big(
      g^r(\tau,X(\tau)) - g^r(t + \delta,X(t)) \big) \diff{W^r(\tau)}
      \Big\|_{L^2(\Omega;\R^d)}\\
      &\qquad + \Big\| \sum_{r = 1}^m \big( g^r(t + \delta,X(t)) -
      g^r(t+\delta,F_{\delta}^{-1}(t + \delta,X(t))) \big) \Delta_\delta
      W^r(t) \Big\|_{L^2(\Omega;\R^d)}.
    \end{split}
  \end{align}
  For the first term we get from Lemma~\ref{lem:cons1} and since 
  $\| ( \id - \E[\,\cdot\,  | \F_t ] \big) Y \|_{L^2(\Omega;\R^d)} \le
  \| Y \|_{L^2(\Omega;\R^d)}$ for all $Y \in L^2(\Omega;\R^d)$ that
  \begin{align*}
    &\int_{t}^{t + \delta} \big\| \big( \id - \E [ \, \cdot
    \, | \F_{t} ] \big) \big( f(\tau,X(\tau)) - f(t+\delta,X(t)) \big)
    \big\|_{L^2(\Omega;\R^d)} \diff{\tau}\le C_{\mathrm{cons}}
    \delta^{\frac{3}{2}}.
  \end{align*}
  We apply Lemma~\ref{lem:cons2} to the second term in \eqref{eq:term5}. This
  yields 
  \begin{align*}
    &\Big\| \sum_{r = 1}^m \int_{t}^{t + \delta} \big(
    g^r(\tau,X(\tau)) - g^r(t + \delta,X(t)) \big) \diff{W^r(\tau)}
    \Big\|_{L^2(\Omega;\R^d)} \le C_{\mathrm{cons}} \delta.
  \end{align*}
  Finally, for the last term in \eqref{eq:term5} it follows from
  \eqref{eq3:loc_Lip}, \eqref{eq:Fhinv_growth}, and \eqref{eq:1ord} that
  \begin{align*}
    &\big| g^r(t + \delta,X(t)) - g^r(t + \delta,F_{\delta}^{-1}(t +
    \delta,X(t))) \big|\\
    &\quad \le L \big( 1 + | X(t) |^{q-1}  + |F_{\delta}^{-1}(t +
    \delta,X(t))|^{q-1} \big)
    \big|X(t) - F_{\delta}^{-1}(t + \delta,X(t)) \big| \\
    &\quad \le \delta C_2 L \big( 1 + | X(t) |^{q-1}  + (1 - L \delta)^{-(q-1)} 
    ( L \delta + | X(t) | )^{q-1} \big)\big( 1 + | X(t) |^{q} \big)\\
    &\quad \le C \delta \big( 1 + | X(t) |^{2q-1} \big)
  \end{align*}
  for a suitable constant $C$ only depending on $C_2$, $L$, $q$, and
  $\overline{h}$. Therefore,
  \begin{align*}
    &\Big\| \sum_{r = 1}^m \big( g^r(t + \delta,X(t)) -
    g^r(t + \delta,F_{\delta}^{-1}(t + \delta,X(t))) \big) \Delta_\delta
    W^r(t) \Big\|_{L^2(\Omega;\R^d)}^2\\
    &\quad = \delta \sum_{r = 1}^m \big\| g^r(t + \delta,X(t)) -
    g^r(t + \delta,F_{\delta}^{-1}(t + \delta,X(t)))
    \big\|_{L^2(\Omega;\R^d)}^2\\
    &\quad \le C^2 m \delta^3 \Big( 1 + \sup_{\tau \in [0,T]} \|
    X(\tau) \|^{2q-1}_{L^{4q-2}(\Omega;\R^d)} \Big)^2. 
  \end{align*}
  Altogether, this completes the proof of \eqref{eq:cons_cond2}.
\end{proof}

The strong convergence of the SSBE scheme follows now directly from
Theorems~\ref{th:SSBEstab} and \ref{th:SSBEcons} as well as
Theorem~\ref{th:Bconv}. 

\begin{theorem}
  \label{cor:SSBEconv}
  Let the functions $f$ and $g^r$, $r = 1,\ldots,m$, satisfy
  Assumption~\ref{as:fg} with constants $L \in (0,
  \infty)$, $\eta \in (1,\infty)$,
  and $q \in (1,\infty)$. Let $\overline{h} \in
  (0,L^{-1})$. If the exact solution $X$ to \eqref{sode}
  satisfies $\sup_{\tau \in [0,T]} \| X(\tau) \|_{L^{4q-2}(\Omega;\R^d)} <
  \infty$, then the split-step backward Euler method
  $(\Psi^{\mathrm{SSBE}},\overline{h},X_0)$ is strongly convergent of order
  $\gamma = \frac{1}{2}$. 
\end{theorem}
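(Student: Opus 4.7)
The plan is to simply combine the three machinery theorems that have already been set up in Sections~\ref{sec:def} and \ref{sec:SSBE}. The strong convergence theorem Theorem~\ref{th:Bconv} for the abstract framework requires two inputs: stochastic C-stability and stochastic B-consistency of some order $\gamma$. Both ingredients for the SSBE method under the present hypotheses have been established: C-stability in Theorem~\ref{th:SSBEstab} (under exactly the hypotheses $L \in (0,\infty)$, $\eta \in (1,\infty)$, $\overline{h} \in (0,L^{-1})$ that are assumed here) and B-consistency of order $\gamma = \frac{1}{2}$ in Theorem~\ref{th:SSBEcons} (which in addition needs $q \in (1,\infty)$, $\overline{h} \in (0,L^{-1})$, and the moment bound $\sup_{\tau \in [0,T]} \|X(\tau)\|_{L^{4q-2}(\Omega;\R^d)} < \infty$, all of which are present in the hypotheses of the theorem).

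The first step is therefore to check that the hypotheses of Theorem~\ref{cor:SSBEconv} imply the hypotheses of both Theorem~\ref{th:SSBEstab} and Theorem~\ref{th:SSBEcons}. This is immediate: the constants $L$, $\eta$, $q$ and the bound $\overline{h} \in (0,L^{-1})$ are the same, $X_0 \in L^2(\Omega,\F_0,\P;\R^d)$ holds since $X_0$ is assumed sufficiently integrable to make \eqref{eq:moments} hold at $p=4q-2$, and the moment bound is assumed outright. Thus $(\Psi^{\mathrm{SSBE}}, \overline{h}, X_0)$ is a stochastic one-step method that is both stochastically C-stable and stochastically B-consistent of order $\frac{1}{2}$ to \eqref{sode}.

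The second step is to invoke Theorem~\ref{th:Bconv} with the initial condition $\xi = X_0$. This yields a constant $C$, depending on $C_{\mathrm{stab}}$, $C_{\mathrm{cons}}$, $T$, $\overline{h}$, and $\eta$, such that for every vector of step sizes $h \in (0,\overline{h}]^N$ the error estimate
\begin{align*}
  \max_{n \in \{0,\ldots,N\}} \big\| X(t_n) - X_h^{\mathrm{SSBE}}(t_n) \big\|_{L^2(\Omega;\R^d)} \le C |h|^{\frac{1}{2}}
\end{align*}
holds. By Definition~\ref{def:conv} this is precisely the assertion of strong convergence of order $\gamma = \frac{1}{2}$.

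There is essentially no obstacle here: all the analytical work has been done in the preceding theorems, and the current statement is a bookkeeping corollary that packages their conclusions. The proof will literally be two sentences citing Theorems~\ref{th:SSBEstab}, \ref{th:SSBEcons}, and \ref{th:Bconv}.
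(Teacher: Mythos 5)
Your proposal is correct and follows exactly the paper's own argument: the paper likewise obtains Theorem~\ref{cor:SSBEconv} by directly combining the C-stability of the SSBE scheme (Theorem~\ref{th:SSBEstab}), its B-consistency of order $\frac{1}{2}$ (Theorem~\ref{th:SSBEcons}), and the abstract convergence result (Theorem~\ref{th:Bconv}) with $\xi = X_0$. Nothing further is needed.
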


  %\begin{example}
%  \label{ex:BEM}
%  The second numerical scheme, which we are considering in this paper, is the
%\emph{implicit Euler-Maruyama method} or \emph{backward Euler-Maruyama
%  method} (BEM) from \cite[Chap.~12]{kloeden1999}. For instance, in
%  \cite{higham2002b, mao2013a} this scheme is studied for
%  stochastic differential equations with super-linearly growing coefficient
%  functions.
%
%  As in the previous example let $h = (h_1,\ldots,h_N)$ be a suitable
%  vector of step sizes. The BEM method is then implicitly given by the
%recursion 
%  \begin{align*}
%    X_h^{\mathrm{BEM}}(t_i) &= X_h^{\mathrm{BEM}}(t_{i-1}) + h_i f(t_{i},
%    X_h^{\mathrm{BEM}}(t_i)) \\
%    &\quad + \sum_{r = 1}^m g^r(t_{i}, X_h^{\mathrm{BEM}}(t_{i-1}) ) \big(
%    W^r(t_{i}) - W^r(t_{i-1}) \big),\quad 1 \le i \le N,\\
%    X_h^{\mathrm{BEM}}(0) &= X_0.
%  \end{align*}
%  In Section~\ref{sec:BEM} we show that the BEM method is well-defined and
%  strongly convergent of order $\gamma = \frac{1}{2}$ under
%  Assumption~\ref{as:fg}. As we have already mentioned above, the convergence
%  result is not established by proving
%  that the BEM method is C-stable. Instead, we follow the same strategy as
%  in \cite{higham2002b} and show that BEM method is a perturbation of the SSBE
%  scheme. This turns out to be a simple consequence of our stability
%  Lemma~\ref{lem:stab}.
%\end{example}
  
\begin{remark}
  \label{rem:BEM}
  Instead of the SSBE method many authors study the 
  \emph{implicit Euler-Maruyama method} or \emph{backward Euler-Maruyama
  method} (BEM) from \cite[Chap.~12]{kloeden1999}. For instance, in
  \cite{andersson2015, higham2002b, mao2013a} this scheme is considered for the
  approximation  of stochastic differential equations with super-linearly
  growing coefficient functions.
  
  Let $h = (h_1,\ldots,h_N)$ be a suitable vector of step sizes. Then, the BEM
  method is implicitly given by the recursion 
  \begin{align*}
    X_h^{\mathrm{BEM}}(t_i) &= X_h^{\mathrm{BEM}}(t_{i-1}) + h_i f(t_{i},
    X_h^{\mathrm{BEM}}(t_i)) \\
    &\quad + \sum_{r = 1}^m g^r(t_{i-1}, X_h^{\mathrm{BEM}}(t_{i-1}) ) \big(
    W^r(t_{i}) - W^r(t_{i-1}) \big),\quad 1 \le i \le N,\\
    X_h^{\mathrm{BEM}}(0) &= X_0.
  \end{align*}
  For the remainder of this remark, we assume that $h$ is a vector of
  equidistant step sizes, that is $h_i = h_j$, for all $i, j = 1,\ldots,N$.
  Further, we consider the situation of autonomous coefficient functions
  $f(t,x) = f(x)$ and $g^r(t,x) = g^r(x)$, $r = 1,\ldots,m$, for all $x \in
  \R^d$ and $t \in [0,T]$. 

  Under these additional conditions we are able to mimic an idea of proof from
  \cite[Lemma~5.1]{higham2002b}. The starting point is the observation that the
  defining recursion of the BEM method can be rewritten artificially as a
  split-step method by 
  \begin{align}
    \label{eq:BEMsplit}
    \begin{split}
      \overline{X}_h^{\mathrm{BEM}}(t_i) &= X_h^{\mathrm{BEM}}(t_{i-1})
      + \sum_{r = 1}^m g^r(X_h^{\mathrm{BEM}}(t_{i-1}) )
      \Delta_{h_i}W^r(t_{i-1}), \\
      X_h^{\mathrm{BEM}}(t_i) &= \overline{X}_h^{\mathrm{BEM}}(t_i) + h_i f(
      X_h^{\mathrm{BEM}}(t_i) )
    \end{split}
  \end{align}
  for every $1 \le i \le N$. Thus, the SSBE scheme and the BEM method only
  differ in the order, in which the implicit step for the drift part and the
  explicit step for the diffusion part are applied. Consequently, one easily
  verifies that $\overline{X}_h^{\mathrm{BEM}}$ is the grid function generated
  by the SSBE scheme $(\Psi^{\mathrm{SSBE}}, \overline{h}, \xi)$ with initial
  condition $\xi = F_{h_1}(X_0)$. Then, one can interpret the BEM method as a
  perturbation of the SSBE scheme in the following sense: By the
  homeomorphism $F_{h_i}( \cdot )$ it holds
  \begin{align}
    \label{eq:relBEM}
    X_h^{\mathrm{BEM}}(t_i) = F_{h_i}^{-1}(
    \overline{X}_h^{\mathrm{BEM}}(t_i) ).
  \end{align}
  Therefore, a strong error result for the BEM method can be derived by an
  application of the stability Lemma~\ref{lem:stab}, where $X_h^{\mathrm{BEM}}$
  takes over the role of the exact solution. To be more precise, we decompose
  the strong error of the BEM method into the following three parts
  \begin{align}
    \label{eq:errBEM}
    \begin{split}
      &\big\| X(t_n) - X_h^{\mathrm{BEM}}(t_n) \big\|_{L^2(\Omega;\R^d)} \le
      \big\| X(t_n) - X_h^{\mathrm{SSBE}}(t_n)
      \big\|_{L^2(\Omega;\R^d)}\\
      &\quad +  \big\| X_h^{\mathrm{SSBE}}(t_n) -
      \overline{X}^{\mathrm{BEM}}_h(t_n) \big\|_{L^2(\Omega;\R^d)} + \big\|
      \overline{X}_h^{\mathrm{BEM}}(t_n) - X_h^{\mathrm{BEM}}(t_n)
      \big\|_{L^2(\Omega;\R^d)}
    \end{split}
  \end{align}
  for every $n \in \{1,\ldots,N\}$. Then the first term is the strong error of
  the SSBE scheme while the second can be estimated by Lemma~\ref{lem:stab} 
  and \eqref{eq3:poly_growth}. Similarly, we derive a suitable bound for the
  third term by inserting \eqref{eq:BEMsplit} and making again use of
  \eqref{eq3:poly_growth}.
  However, this line of arguments has the disadvantage that we are in need of
  higher moment bounds for the grid function $X_h^{\mathrm{BEM}}$, uniformly
  with respect to the step size $h$. We have not been able to prove if the BEM
  method is a stochastically C-stable numerical one-step scheme under
  Assumption \ref{as:fg}. We refer to \cite{andersson2015} for a more
  direct proof of the mean-square convergence of the backward Euler method,
  which does not rely on higher moment bounds of the numerical scheme.
\end{remark}

\section{C-stability and B-consistency of the PEM method}
\label{sec:PEM}
In this section we prove that the projected Euler-Maruyama method from
Example~\ref{ex:PEM} is stochastically C-stable and B-consistent of order
$\gamma = \frac{1}{2}$. 

We begin by showing that the PEM method is a stochastic
one-step method in the sense of Definition~\ref{def:onestep}. Let
Assumption~\ref{as:fg} be satisfied with growth rate $q \in (1,\infty)$. Then
we set $\alpha = \frac{1}{2(q-1)}$ and for an arbitrary upper step size bound
$\overline{h} \in (0, 1]$ we  define the one-step map $\Psi^{\mathrm{PEM}}
\colon \R^d \times \mathbb{T} \times \Omega \to \R^d$ by 
\begin{align}
  \label{eq:PsiPEM}
  \begin{split}
    \Psi^{\mathrm{PEM}}(x,t,\delta) &:= \min(1,\delta^{-\alpha}|x|^{-1}) x 
    + \delta f(t, \min(1,\delta^{-\alpha}|x|^{-1}) x )\\
    &\quad + \sum_{r = 1}^m g^r(t, \min(1,\delta^{-\alpha}|x|^{-1}) x)
    \Delta_\delta W^r(t)    
  \end{split}
\end{align}
for every $x \in \R^d$ and $(t,\delta) \in \mathbb{T}$. As before we write
$\Delta_\delta W^r(t) = W^r(t + \delta) - W^r(t)$.

\begin{prop}  
  \label{prop:PEM}
  Let the functions $f$ and $g^r$, $r = 1,\ldots,m$, satisfy
  Assumption~\ref{as:fg} with $L \in (0,\infty)$, $q \in (1,\infty)$, and let
  $\overline{h} \in (0,1]$. For every initial value $\xi \in
  L^2(\Omega;\F_{0},\P;\R^d)$ it holds that $(\Psi^{\mathrm{PEM}},
  \overline{h}, \xi)$ with $\alpha = \frac{1}{2(q-1)}$ is a stochastic one-step
  method. 
  
  In addition, there exists a constant $C_0$ only depending on $L$ and
  $m$ such that 
  \begin{align}
    \label{eq:PEMzero1}
    \big\| \E \big[ \Psi^{\mathrm{PEM}}( 0, t,\delta) | \F_{t} \big]
    \big\|_{L^2(\Omega;\R^d)} &\le C_0 \delta,\\
    \label{eq:PEMzero2}
    \big\| \big( \id - \E [ \, \cdot\, | \F_{t} ] \big)
    \Psi^{\mathrm{PEM}}( 0, t,\delta) \big\|_{L^2(\Omega;\R^d)} & \le C_0
    \delta^{\frac{1}{2}}    
  \end{align}
  for all $(t,\delta) \in \mathbb{T}$.
\end{prop}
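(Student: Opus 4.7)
The plan is to address the two assertions separately, both resting on a single observation: if we write $P_\delta(x) := \min(1, \delta^{-\alpha}|x|^{-1})x$ for the projection appearing in the PEM scheme (with the convention $P_\delta(0) := 0$), then $P_\delta \colon \R^d \to \R^d$ is continuous, and satisfies the deterministic a priori bound $|P_\delta(x)| \le \delta^{-\alpha}$ for every $x \in \R^d$. This tames the superlinear growth of $f$ and $g^r$ in \eqref{eq3:poly_growth} by a quantity depending only on $\delta$, which is exactly why no moment bound on the argument $Z$ beyond $L^2$ will be required.

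To verify the measurability and integrability condition \eqref{eq:Psicond} I would fix $(t,\delta) \in \mathbb{T}$ and $Z \in L^2(\Omega,\F_t,\P;\R^d)$. The composition $P_\delta(Z)$ is $\F_t$-measurable since $P_\delta$ is Borel measurable; by continuity of $f$ and $g^r$ from Assumption~\ref{as:fg}, so are $f(t, P_\delta(Z))$ and $g^r(t, P_\delta(Z))$, while the Brownian increments $\Delta_\delta W^r(t)$ are $\F_{t+\delta}$-measurable and independent of $\F_t$. Hence $\Psi^{\mathrm{PEM}}(Z,t,\delta)$ is $\F_{t+\delta}$-measurable. For square integrability I would plug the pointwise bound $|P_\delta(Z)| \le \delta^{-\alpha}$ into \eqref{eq3:poly_growth} to obtain $|f(t,P_\delta(Z))| \vee |g^r(t,P_\delta(Z))| \le L(1 + \delta^{-\alpha q})$ deterministically, where $\alpha q = q/(2(q-1))$ is finite. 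Each summand in \eqref{eq:PsiPEM} is then in $L^2$: the drift and projection terms by their deterministic bounds, and each stochastic term by the It\^{o} isometry combined with the deterministic bound on $g^r(t,P_\delta(Z))$.

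For the estimates \eqref{eq:PEMzero1} and \eqref{eq:PEMzero2} I would specialize to $x = 0$, where $P_\delta(0) = 0$ collapses \eqref{eq:PsiPEM} to
\[
  \Psi^{\mathrm{PEM}}(0,t,\delta) = \delta f(t,0) + \sum_{r=1}^m g^r(t,0)\, \Delta_\delta W^r(t).
\]
Conditioning on $\F_t$ makes the mean-zero Brownian increments drop out, so $\E[\Psi^{\mathrm{PEM}}(0,t,\delta) \mid \F_t] = \delta f(t,0)$, and $|f(t,0)| \le L$ by \eqref{eq3:poly_growth} at $x = 0$; this yields \eqref{eq:PEMzero1}. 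The residual $(\id - \E[\,\cdot\,|\F_t])\Psi^{\mathrm{PEM}}(0,t,\delta) = \sum_r g^r(t,0)\, \Delta_\delta W^r(t)$ has $L^2$-norm squared equal to $\delta \sum_{r=1}^m |g^r(t,0)|^2 \le L^2 m\, \delta$ by the It\^{o} isometry together with the pairwise independence of the $W^r$; this delivers \eqref{eq:PEMzero2}. Both bounds hold with a constant $C_0$ depending only on $L$ and $m$, as claimed.

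The only (minor) obstacle is notational: making sure that the factor $\min(1,\delta^{-\alpha}|x|^{-1})$ is interpreted consistently at $x = 0$ so that $\Psi^{\mathrm{PEM}}$ is a well-defined, continuous function on all of $\R^d$. Once this is settled, the entire argument is a direct application of the deterministic truncation $|P_\delta(x)| \le \delta^{-\alpha}$ combined with \eqref{eq3:poly_growth}; the superlinearity of $f$ and $g^r$ plays no role in the $L^2$-estimates, since the projection bounds its argument independently of~$Z$.
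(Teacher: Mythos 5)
Your proposal is correct and follows essentially the same route as the paper: measurability and square integrability of $\Psi^{\mathrm{PEM}}(Z,t,\delta)$ are obtained from the boundedness $|\min(1,\delta^{-\alpha}|Z|^{-1})Z| \le \delta^{-\alpha}$ combined with the polynomial growth bound \eqref{eq3:poly_growth}, and the two estimates at $Z=0$ follow from $|f(t,0)| \vee |g^r(t,0)| \le L$ together with It\^{o}'s isometry, exactly as in the paper's proof. The only cosmetic difference is that you make the convention at $x=0$ and the deterministic bound $L(1+\delta^{-\alpha q})$ explicit, whereas the paper phrases the same facts via $L^\infty$-membership of the projected random variable.
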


\begin{proof}
  As in the proof of Proposition~\ref{prop:SSBE} we first
  verify that $\Psi^{\mathrm{PEM}}$ satisfies \eqref{eq:Psicond}. Let us 
  fix arbitrary $(t, \delta) \in \mathbb{T}$ and $Z \in
  L^2(\Omega,\F_t,\P;\R^d)$.  
  
  By the continuity and boundedness of the mapping $\R^d \ni x \mapsto \min(1,
  \delta^{-\alpha} |x|^{-1} ) x$ we obtain
  \begin{align*}
    \min(1, \delta^{-\alpha} |Z|^{-1} ) Z \in L^\infty(\Omega,\F_t,\P;\R^d).
  \end{align*}
  Consequently, by \eqref{eq3:poly_growth} it also holds true that
  \begin{align*}
    f(t,\min(1, \delta^{-\alpha} |Z|^{-1} ) Z) \in L^\infty(\Omega,\F_t,\P;\R^d)
  \end{align*}
  as well as
  \begin{align*}
    g^r(t,\min(1, \delta^{-\alpha} |Z|^{-1} ) Z) \in
    L^\infty(\Omega,\F_t,\P;\R^d) 
  \end{align*}
  for every $r = 1,\ldots,m$. Therefore, $\Psi^{\mathrm{PEM}}(Z,t,\delta)
  \colon \Omega \to \R^d$ is an $\F_{t+\delta} / \B(\R^d)$-measurable random
  variable, which satisfies condition \eqref{eq:Psicond}.

  It remains to show \eqref{eq:PEMzero1} and \eqref{eq:PEMzero2}. From
  \eqref{eq3:poly_growth} it follows at once that
  \begin{align*}
    \big\| \E \big[ \Psi^{\mathrm{PEM}}( 0, t,\delta) | \F_{t} \big]
    \big\|_{L^2(\Omega;\R^d)} =
    \big| \delta f(t, 0) \big| \le  L \delta.
  \end{align*}
  Similarly, from It\=o's isometry and \eqref{eq3:poly_growth} we get
  \begin{align*}
    &\big\| \big( \id - \E [ \, \cdot\, | \F_{t} ] \big)
    \Psi^{\mathrm{PEM}}( 0, t,\delta) \big\|_{L^2(\Omega;\R^d)}^2\\
    &\quad = \Big\| \sum_{r = 1}^m g^r(t,  0 )
    \big( W^r(t+\delta) - W^r(t) \big) \Big\|^2_{L^2(\Omega;\R^d)}
    = \delta \sum_{r = 1}^m \big| g^r(t, 0 ) \big|^2
    \le L^2 m \delta.
  \end{align*}
  This verifies \eqref{eq:PEMzero1} and \eqref{eq:PEMzero2}. 
\end{proof}

For the formulation of the following lemmas we introduce the abbreviation
\begin{align}
  \label{eq:circ}
  x^\circ := \min(1, \delta^{-\alpha} |x|^{-1}) x
\end{align}
for every $x \in \R^d$ and every step size $\delta \in (0,1]$. 

\begin{lemma}
  \label{lem:circ}
  For every $\alpha \in (0,\infty)$ and $\delta \in (0,1]$ the
  mapping $\R^d \ni x \mapsto x^\circ \in \R^d$ is globally Lipschitz
  continuous with Lipschitz constant $1$. In particular, it holds
  \begin{align}
    \label{eq:PEMLip}
    \big| x^\circ_1 - x^\circ_2 \big| \le \big| x_1- x_2 \big|
  \end{align}
  for all $x_1, x_2 \in \R^d$.
\end{lemma}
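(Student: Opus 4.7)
The function $x\mapsto x^\circ$ is precisely the Euclidean metric projection of $x$ onto the closed ball $B_R:=\{y\in\R^d:|y|\le R\}$ with $R:=\delta^{-\alpha}$: indeed, if $|x|\le R$ then $\min(1,\delta^{-\alpha}|x|^{-1})=1$ and $x^\circ=x\in B_R$, while if $|x|>R$ then $\min(1,\delta^{-\alpha}|x|^{-1})=R/|x|$ and $x^\circ=R\,x/|x|$ lies on $\partial B_R$. Hence \eqref{eq:PEMLip} is nothing but the well-known nonexpansivity of projections onto nonempty closed convex subsets of Hilbert space. Since the paper is aiming for a self-contained exposition, I would give a short direct argument.

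The plan is to split into three cases. Case (i): $|x_1|\le R$ and $|x_2|\le R$. Then $x_1^\circ=x_1$, $x_2^\circ=x_2$, and the inequality is an equality. Case (ii): $|x_1|>R$ and $|x_2|>R$. Writing out the squared norm one obtains
\[
|x_1^\circ-x_2^\circ|^2 \;=\; 2R^2\Bigl(1-\tfrac{\langle x_1,x_2\rangle}{|x_1||x_2|}\Bigr),\qquad
|x_1-x_2|^2 \;=\; |x_1|^2+|x_2|^2-2\langle x_1,x_2\rangle.
\]
Subtracting and using $|x_1||x_2|\ge R^2$ together with $\langle x_1,x_2\rangle\le|x_1||x_2|$ reduces the claim to $(|x_1|-|x_2|)^2\ge 0$, after treating the sub-cases $\langle x_1,x_2\rangle\le 0$ and $\langle x_1,x_2\rangle>0$ separately.

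Case (iii) is the mixed one, say $|x_1|\le R<|x_2|$, so $x_1^\circ=x_1$ and $x_2^\circ=(R/|x_2|)x_2$. A direct expansion gives
\[
|x_1-x_2|^2-|x_1^\circ-x_2^\circ|^2 \;=\;(|x_2|-R)\Bigl(|x_2|+R-\tfrac{2\langle x_1,x_2\rangle}{|x_2|}\Bigr).
\]
The first factor is positive by assumption. For the second, the Cauchy--Schwarz inequality and $|x_1|\le R$ give $2\langle x_1,x_2\rangle/|x_2|\le 2|x_1|\le 2R\le R+|x_2|$, so the second factor is nonnegative. This yields \eqref{eq:PEMLip} in case (iii) and completes the proof.

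No step is genuinely hard; the only mildly technical bookkeeping is the algebraic manipulation in case (ii), where one has to notice that the combination of $|x_1||x_2|\ge R^2$ with $(|x_1|-|x_2|)^2\ge 0$ is exactly what is needed independently of the sign of $\langle x_1,x_2\rangle$. An alternative, essentially one-line write-up is to cite (or prove in two lines by a variational inequality) the nonexpansivity of the projection onto the closed convex set $B_R$.
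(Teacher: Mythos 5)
Your proof is correct, and your opening observation---that $x \mapsto x^\circ$ is exactly the Euclidean metric projection onto the closed ball of radius $R=\delta^{-\alpha}$, hence nonexpansive as a projection onto a nonempty closed convex subset of a Hilbert space---is a legitimate one-line alternative that the paper does not invoke. Your hands-on argument differs from the paper's mainly in organization: you split into three cases (both points inside the ball, both outside, mixed) and verify \eqref{eq:PEMLip} by a direct expansion in each, with Cauchy--Schwarz finishing cases (ii) and (iii); your displayed identities in cases (ii) and (iii) are correct. The paper instead performs a single computation: it writes $|x_1^\circ - x_2^\circ|^2 = |x_1-x_2|^2 + \bigl[\,|x_1^\circ|^2-|x_1|^2-2(\langle x_1^\circ,x_2^\circ\rangle-\langle x_1,x_2\rangle)+|x_2^\circ|^2-|x_2|^2\,\bigr]$, disposes of the case where both points lie in the ball, assumes without loss of generality $|x_1|>\delta^{-\alpha}$, and bounds the bracket via Cauchy--Schwarz and completing the square by $\bigl(\delta^{-\alpha}-\min(|x_2|,\delta^{-\alpha})\bigr)^2-\bigl(|x_1|-|x_2|\bigr)^2\le 0$; the $\min$ notation lets it treat your cases (ii) and (iii) simultaneously. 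Substantively the two arguments use the same ingredients (expansion of Euclidean norms plus Cauchy--Schwarz), so this is a variant rather than a genuinely different proof; your version buys more transparent case-by-case identities at the cost of some bookkeeping (and the sub-case split on the sign of $\langle x_1,x_2\rangle$ in case (ii) is unnecessary, since all you need there is $\langle x_1,x_2\rangle\le|x_1||x_2|$ together with $|x_1||x_2|\ge R^2$), while the projection remark buys the shortest and most conceptual route.
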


\begin{proof}
  For a proof of the Lipschitz continuity we first compute 
  \begin{align*}
    \big| x^\circ_1 - x^\circ_2 \big|^2 &= | x_1 - x_2 |^2 + \big[
    \big|x^\circ_1 \big|^2 - |x_1|^2 - 2 \big( \langle
    x_1^\circ, x_2^\circ \rangle - \langle x_1, x_2 \rangle \big) 
    + \big|x^\circ_2\big|^2 - |x_2|^2 \big]
  \end{align*}
  for all $x_1, x_2 \in \R^d$. We show that the second term is always
  nonpositive.    

  This is clearly true for the case $|x_1| \le \delta^{-\alpha}$ and $|x_2| \le
  \delta^{-\alpha}$, since then $x_i = x_i^\circ$, $i \in \{1,2\}$. Therefore,
  for the rest of this proof we assume without 
  loss of generality that $|x_1| > \delta^{-\alpha}$. After inserting this into
  the second term we
  obtain from an application of the Cauchy-Schwarz inequality 
  \begin{align*}
    &\big|x^\circ_1 \big|^2 - |x_1|^2 - 2 \big( \langle
    x_1^\circ, x_2^\circ \rangle - \langle x_1, x_2 \rangle \big) 
    + \big|x^\circ_2\big|^2 - |x_2|^2\\
    &\quad = \delta^{-2\alpha} - |x_1|^2  + \min(|x_2|,
    \delta^{-\alpha} )^2 - |x_2|^2 \\
    &\qquad + 2 \big( 1 - \delta^{-\alpha} |x_1|^{-1} 
    \min(1, \delta^{-\alpha} |x_2|^{-1}) \big) \langle x_1, x_2 \rangle\\
    &\quad \le \delta^{-2\alpha} - |x_1|^2  + \min(|x_2|,
    \delta^{-\alpha} )^2 - |x_2|^2 \\
    &\qquad + 2 \big( |x_1| |x_2| - \delta^{-\alpha} \min(|x_2|,
    \delta^{-\alpha}) \big)\\
    &\quad = \big( \delta^{-\alpha} -\min(|x_2|,
    \delta^{-\alpha}) \big)^2 - \big( |x_1| - |x_2| \big)^2 \le 0,   
  \end{align*}
  since we assumed $|x_1| > \delta^{-\alpha}$. This proves the asserted
  Lipschitz continuity.
\end{proof}

The following inequality \eqref{eq:PEMcstab1} will play the same role for the
stability analysis of the PEM method as \eqref{eq:stab} does for the SSBE
scheme.

\begin{lemma}
  \label{lem:PEM}
  Let $f$ and $g^r$, $r
  = 1,\ldots,m$, satisfy Assumption~\ref{as:fg} with $L \in
  (0,\infty)$, $q \in (1,\infty)$, and $\eta \in (\frac{1}{2},\infty)$. 
  Consider the mapping $\R^d \ni x \mapsto x^\circ \in \R^d$ defined in
  \eqref{eq:circ} with $\alpha \in (0,\frac{1}{2(q-1)}]$ and $\delta \in (0,1]$.
  Then, there exists a constant $C$ only depending on $L$ with
  \begin{align}
    \label{eq:PEMcstab1}
    \begin{split}
      & \big| x^\circ_1 - x_2^\circ + \delta ( f(t,x_1^\circ) - f(t,x_2^\circ))
      \big|^2 + 2 \eta \delta \sum_{r = 1}^m \big| g^r(t,x_1^\circ) -
      g^r(t,x_2^\circ) \big|^2\\
      &\qquad \le (1 + C \delta) | x_1 - x_2 |^2
    \end{split}
  \end{align}
  for all $x_1, x_2 \in \R^d$.
\end{lemma}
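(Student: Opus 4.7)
The plan is to expand the first squared norm on the left-hand side, apply the global monotonicity condition \eqref{eq3:onesided} at the projected points $x_1^\circ, x_2^\circ$ to absorb the cross term together with the diffusion sum, then bound the leftover quadratic term in $f$ using the local Lipschitz estimate \eqref{eq3:loc_Lip} and the a priori bound $|x^\circ| \le \delta^{-\alpha}$, and finally pass from $|x_1^\circ - x_2^\circ|^2$ to $|x_1 - x_2|^2$ via Lemma~\ref{lem:circ}.

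More concretely, I first write
\begin{align*}
 &\big| x^\circ_1 - x_2^\circ + \delta ( f(t,x_1^\circ) - f(t,x_2^\circ)) \big|^2\\
 &\quad = |x_1^\circ - x_2^\circ|^2 + 2\delta \langle f(t,x_1^\circ) - f(t,x_2^\circ), x_1^\circ - x_2^\circ \rangle + \delta^2 |f(t,x_1^\circ) - f(t,x_2^\circ)|^2.
\end{align*}
Adding the diffusion term, multiplying \eqref{eq3:onesided} applied at $x_1^\circ, x_2^\circ$ by $2\delta$, and using it to dominate the middle term plus the $2\eta\delta$ sum yields
\[
 \text{(LHS)} \le (1 + 2L\delta)\, |x_1^\circ - x_2^\circ|^2 + \delta^2 |f(t,x_1^\circ) - f(t,x_2^\circ)|^2.
\]

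For the remaining term, the key observation is that by construction $|x_i^\circ| \le \delta^{-\alpha}$ and $\alpha(q-1) \le \frac{1}{2}$, so $|x_i^\circ|^{q-1} \le \delta^{-1/2}$ for $i \in \{1,2\}$. Combined with \eqref{eq3:loc_Lip} and $\delta \le 1$, this gives
\[
 |f(t,x_1^\circ) - f(t,x_2^\circ)| \le L\big(1 + 2\delta^{-1/2}\big) |x_1^\circ - x_2^\circ| \le 3L\, \delta^{-1/2} |x_1^\circ - x_2^\circ|,
\]
so that $\delta^2 |f(t,x_1^\circ) - f(t,x_2^\circ)|^2 \le 9L^2 \delta\, |x_1^\circ - x_2^\circ|^2$. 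Consequently
\[
 \text{(LHS)} \le (1 + (2L + 9L^2)\delta)\, |x_1^\circ - x_2^\circ|^2,
\]
and an application of \eqref{eq:PEMLip} from Lemma~\ref{lem:circ} finishes the proof with $C := 2L + 9L^2$.

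The main obstacle, and really the whole point of the projection, is the cubic-looking term $\delta^2 |f(t,x_1^\circ) - f(t,x_2^\circ)|^2$: without the truncation this term would carry a factor $(1 + |x_1|^{q-1} + |x_2|^{q-1})^2$ that cannot be absorbed into the desired $\delta$-factor. The calibration $\alpha = \frac{1}{2(q-1)}$ is precisely what turns the polynomial growth $\delta^{-2\alpha(q-1)}$ into a harmless $\delta^{-1}$, allowing it to combine with the prefactor $\delta^2$ to yield the required $\mathcal{O}(\delta)$ bound.
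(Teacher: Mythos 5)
Your proof is correct and follows essentially the same route as the paper: expand the square, absorb the cross term together with the diffusion sum via the global monotonicity condition at the projected points, control $\delta^2|f(t,x_1^\circ)-f(t,x_2^\circ)|^2$ using the local Lipschitz bound, the cutoff $|x_i^\circ|\le\delta^{-\alpha}$, and $\alpha(q-1)\le\tfrac12$, and finish with the Lipschitz property of the projection from Lemma~\ref{lem:circ}, arriving at the same constant $C=2L+9L^2$. The only (immaterial) difference is that you apply Lemma~\ref{lem:circ} once at the very end, whereas the paper inserts $|x_1^\circ-x_2^\circ|\le|x_1-x_2|$ already inside the estimate of the drift difference.
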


\begin{proof}
  For the proof of \eqref{eq:PEMcstab1} we obtain from \eqref{eq3:onesided} 
  \begin{align*}
    &\big| x^\circ_1 - x_2^\circ + \delta ( f(t,x_1^\circ) - f(t,x_2^\circ))
    \big|^2\\
    &\quad =  \big| x^\circ_1 - x_2^\circ \big|^2 + 2 \delta \big\langle
    x^\circ_1 - x_2^\circ, f(t,x_1^\circ) - f(t,x_2^\circ) \big\rangle + 
    \delta^2 \big| f(t,x_1^\circ) - f(t,x_2^\circ) \big|^2  \\
    &\quad \le (1 + 2 L \delta) \big| x^\circ_1 - x_2^\circ \big|^2 
    - 2 \eta \delta \sum_{r = 1}^m \big| g^r(t,x_1^\circ) - g^r(t,x_2^\circ)
    \big|^2 
    %\\ &\qquad
    + \delta^2 \big| f(t,x_1^\circ) - f(t,x_2^\circ) \big|^2
  \end{align*}
  for all $x_1, x_2 \in \R^d$. Next, applications of \eqref{eq3:loc_Lip} and
  \eqref{eq:PEMLip} yield
  \begin{align*}
     \big| f(t,x_1^\circ) - f(t,x_2^\circ) \big| &\le L \big( 1 +
     |x_1^\circ|^{q-1} + |x_2^\circ|^{q-1}  \big) \big|x_1^\circ - x_2^\circ
     \big| \\
     &\le L \big( 1 + 2 \delta^{-\alpha(q-1)} \big) \big|x_1 - x_2 \big|,
  \end{align*}
  where we also made use of the fact that $|x_1^\circ|, |x_2^\circ| \le
  \delta^{-\alpha}$. 
  %After inserting $\alpha = \frac{1}{2} ( q-1)^{-1}$ we conclude
  Since $\alpha \in (0, \frac{1}{2(q-1)}]$ and $\delta \in (0,1]$ it follows
  that $\delta^{-\alpha(q-1)} \le \delta^{-\frac{1}{2}}$. Hence, we get
  \begin{align*}
    &\big| x^\circ_1 - x_2^\circ + \delta ( f(t,x_1^\circ) - f(t,x_2^\circ))
    \big|^2  + 2 \eta \delta \sum_{r = 1}^m \big|
    g^r(t,x_1^\circ) - g^r(t,x_2^\circ) \big|^2\\
    &\quad \le (1 + 2 L \delta) \big| x_1 - x_2 \big|^2
    + \delta^2 L^2 \big( 1 + 2 \delta^{-\frac{1}{2}} \big)^2 \big|x_1 - x_2
    \big|^2 \le (1 + C \delta) \big|x_1 - x_2 \big|^2
  \end{align*}
  with $C = 2L + 9 L^2$. 
\end{proof}

The next theorem verifies that the PEM method is stochastically C-stable.

\begin{theorem}
  \label{th:PEMstab}
  Let the functions $f$ and $g^r$, $r = 1,\ldots,m$, satisfy
  Assumption~\ref{as:fg} with $L \in (0,\infty)$, $q \in (1,\infty)$, and
  $\eta \in (\frac{1}{2},\infty)$. Further, let 
  $\overline{h} \in (0, 1]$. Then, for every $\xi \in
  L^2(\Omega,\F_0,\P;\R^d)$ the projected Euler-Maruyama method
  $(\Psi^{\mathrm{PEM}},\overline{h},\xi)$ with $\alpha = \frac{1}{2(q-1)}$ is
  stochastically C-stable.
\end{theorem}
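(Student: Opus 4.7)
The plan is to decompose the one-step map difference $\Psi^{\mathrm{PEM}}(Y,t,\delta) - \Psi^{\mathrm{PEM}}(Z,t,\delta)$ into its $\F_t$-measurable (predictable) part and its stochastic (noise) part, and then to bound each using Lemma~\ref{lem:PEM}. The key observation is that, by the very definition \eqref{eq:PsiPEM} of $\Psi^{\mathrm{PEM}}$, the projected quantities $Y^\circ$ and $Z^\circ$ are $\F_t$-measurable and bounded in sup-norm by $\delta^{-\alpha}$, so all drift and diffusion evaluations are $\F_t$-measurable.

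First I would identify the decomposition explicitly. Since $Y^\circ, Z^\circ, f(t,Y^\circ), f(t,Z^\circ)$ are $\F_t$-measurable and the Brownian increments $\Delta_\delta W^r(t)$ have mean zero and are independent of $\F_t$, one has
\begin{align*}
  \E\big[\Psi^{\mathrm{PEM}}(Y,t,\delta) - \Psi^{\mathrm{PEM}}(Z,t,\delta)\big|\F_t\big] &= (Y^\circ - Z^\circ) + \delta\big(f(t,Y^\circ) - f(t,Z^\circ)\big),\\
  \big(\id - \E[\,\cdot\,|\F_t]\big)\big(\Psi^{\mathrm{PEM}}(Y,t,\delta) - \Psi^{\mathrm{PEM}}(Z,t,\delta)\big) &= \sum_{r=1}^m \big(g^r(t,Y^\circ) - g^r(t,Z^\circ)\big)\Delta_\delta W^r(t).
\end{align*}

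Next, I would compute the two $L^2(\Omega;\R^d)$ norms. The first is just the $L^2$ norm of the predictable expression. For the second, since $g^r(t,Y^\circ)-g^r(t,Z^\circ)$ is $\F_t$-measurable and the family $\{W^r\}$ consists of independent standard Brownian motions, a conditional It\^o-isometry computation yields
\begin{align*}
  \Big\|\sum_{r=1}^m \big(g^r(t,Y^\circ) - g^r(t,Z^\circ)\big)\Delta_\delta W^r(t)\Big\|^2_{L^2(\Omega;\R^d)} = \delta\sum_{r=1}^m \big\|g^r(t,Y^\circ) - g^r(t,Z^\circ)\big\|^2_{L^2(\Omega;\R^d)}.
\end{align*}

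Now comes the matching of constants, which I expect is the only subtle point: Definition~\ref{def:cstab} requires the weight on the orthogonal piece to be some $\eta_{\mathrm{stab}} \in (1,\infty)$, whereas Lemma~\ref{lem:PEM} provides the factor $2\eta$ with $\eta$ from Assumption~\ref{as:fg} only guaranteed to exceed $1/2$. Setting $\eta_{\mathrm{stab}} := 2\eta$, which lies in $(1,\infty)$ precisely because Theorem~\ref{th:PEMstab} assumes $\eta > \tfrac{1}{2}$, the combined LHS of \eqref{eq:stab_cond1} equals
\begin{align*}
  \E\Big[\big|Y^\circ - Z^\circ + \delta(f(t,Y^\circ)-f(t,Z^\circ))\big|^2 + 2\eta\delta\sum_{r=1}^m \big|g^r(t,Y^\circ)-g^r(t,Z^\circ)\big|^2\Big].
\end{align*}
Applying Lemma~\ref{lem:PEM} pointwise inside the expectation (valid since $\alpha=\tfrac{1}{2(q-1)}$ satisfies the hypothesis of the lemma and $\delta\le\overline{h}\le 1$) yields the bound $(1+C\delta)\E[|Y-Z|^2]=(1+C\delta)\|Y-Z\|_{L^2(\Omega;\R^d)}^2$ with $C$ depending only on $L$. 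Taking $C_{\mathrm{stab}}:=C$ establishes \eqref{eq:stab_cond1} and hence stochastic C-stability. The potential obstacle — that the weight on the diffusion piece seems to be only $\eta>1/2$ rather than the required $>1$ — is resolved exactly by the factor $2$ generated in Lemma~\ref{lem:PEM} from expanding the square and using the monotonicity condition \eqref{eq3:onesided}.
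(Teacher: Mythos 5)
Your proposal is correct and follows essentially the same route as the paper: the same decomposition into the $\F_t$-conditional mean and the martingale (diffusion) part, the It\^o isometry for the orthogonal term, and an application of the pointwise estimate \eqref{eq:PEMcstab1} from Lemma~\ref{lem:PEM} inside the expectation, with the stability parameter taken as $2\eta \in (1,\infty)$ and $C_{\mathrm{stab}} = C$. Your explicit remark that the factor $2$ produced in Lemma~\ref{lem:PEM} is what upgrades $\eta > \tfrac{1}{2}$ to the required parameter in $(1,\infty)$ is exactly the point the paper uses implicitly.
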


\begin{proof}
  Let $(t,\delta) \in \mathbb{T}$ be arbitrary and consider $Y, Z \in
  L^2(\Omega,\F_t,\P;\R^d)$. By recalling the notation \eqref{eq:circ}
  we get that
  \begin{align*}
    \E \big[ \Psi^{\mathrm{PEM}}(Y,t,\delta) -
    \Psi^{\mathrm{PEM}}(Z,t,\delta) | \F_{t} \big]
     = Y^\circ + \delta f(t, Y^\circ) - ( Z^\circ + \delta f(t,Z^\circ))
  \end{align*}
  and
  \begin{align*}
    &\big( \id - \E [ \, \cdot \, | \F_{t} ] \big)
    \big( \Psi^{\mathrm{PEM}}(Y,t,\delta) -
    \Psi^{\mathrm{PEM}}(Z,t,\delta)  \big)\\
    &\quad = \sum_{r = 1}^m \big(
    g^r(t , Y^\circ) - 
    g^r(t , Z^\circ) \big) \Delta_\delta W^r(t).
  \end{align*}
  Then, from the It\=o isometry and \eqref{eq:PEMcstab1} it follows 
  \begin{align*}
      &\big\| Y^\circ + \delta f(t, Y^\circ) - ( Z^\circ + \delta f(t,Z^\circ))
      \big\|_{L^2(\Omega;\R^d)}^2\\
      &\quad + 2 \eta \Big\| \sum_{r = 1}^m \big( g^r(t , Y^\circ) - 
      g^r(t , Z^\circ) \big) \Delta_\delta W^r(t) \Big\|^2_{L^2(\Omega;\R^d)} \\
      &= \E \Big[ \big| Y^\circ + \delta f(t, Y^\circ) - ( Z^\circ + \delta
      f(t,Z^\circ)) \big|^2 +
      2 \eta \delta \sum_{r = 1}^m \big|  g^r(t , Y^\circ) - 
      g^r(t , Z^\circ) \big|^2 \Big]\\
      &\le (1 + C \delta) \big\| Y - Z \big\|^2_{L^2(\Omega;\R^d)}.
  \end{align*}
  which is condition \eqref{eq:stab_cond1} for the PEM method with
  $C_{\mathrm{stab}} =C$.
\end{proof}

It remains to show that the PEM method is stochastically $B$-consistent of
order $\gamma = \frac{1}{2}$. An important ingredient of our proof is
contained in the following lemma, which is based on an argument already
found in the proof of \cite[Theorem~2.2]{higham2002b}.

\begin{lemma}
  \label{lem:PEMcons1}
  Let $L \in (0,\infty)$ and $\kappa \in [1, \infty)$. Consider a measurable
  mapping $\varphi \colon \R^d \to \R^d$ which satisfies
  \begin{align*}
    |\varphi(x) | \le L \big(1 + |x|^\kappa \big)
  \end{align*}
  for all $x \in \R^d$. For some $p \in (2,\infty)$ let $Y \in
  L^{p\kappa}(\Omega;\R^d)$. Then there exists a constant $C$ only depending on
  $L$ and $p$ with   
  \begin{align*}
    \big\| \varphi(Y) - \varphi(Y^\circ) \big\|_{L^2(\Omega;\R^d)} \le C \big(
    1 +  \|Y\|_{L^{p\kappa}(\Omega;\R^d)}^\kappa \big)^{\frac{p}{2}}
    \delta^{\frac{1}{2} \alpha (p -2 ) \kappa}
  \end{align*}
  for all $\delta \in (0,1]$, where $Y^\circ = \min \big(1, \delta^{-\alpha}
  |Y|^{-1} \big) Y$ with arbitrary $\alpha \in (0,\infty)$.
\end{lemma}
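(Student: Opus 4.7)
The plan is to exploit the crucial observation that $\varphi(Y)-\varphi(Y^\circ)$ vanishes identically on the event $A^c = \{|Y| \le \delta^{-\alpha}\}$, because on this event the multiplier $\min(1,\delta^{-\alpha}|Y|^{-1})$ equals $1$, so $Y^\circ = Y$. Hence the whole $L^2$-integral reduces to an integral over the ``bad'' event $A = \{|Y| > \delta^{-\alpha}\}$, whose probability is small in $\delta$ thanks to the $L^{p\kappa}$-integrability of $Y$.

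The key steps I would carry out are as follows. First, using the pointwise growth bound on $\varphi$ together with $|Y^\circ| \le |Y|$, estimate
\[
\big|\varphi(Y) - \varphi(Y^\circ)\big| \le 2L\big(1 + |Y|^\kappa\big) \quad \text{on } A, \qquad \big|\varphi(Y) - \varphi(Y^\circ)\big| = 0 \quad \text{on } A^c.
\]
Second, apply H\"older's inequality with conjugate exponents $p/2$ and $p/(p-2)$ to the indicator representation $\|\varphi(Y)-\varphi(Y^\circ)\|_{L^2(\Omega;\R^d)}^2 = \E\big[|\varphi(Y)-\varphi(Y^\circ)|^2 \one_A\big]$ to split this into
\[
\big\| \varphi(Y) - \varphi(Y^\circ) \big\|_{L^2(\Omega;\R^d)}^2 \le 4L^2 \big\| 1 + |Y|^\kappa \big\|_{L^p(\Omega;\R)}^2 \, \P(A)^{(p-2)/p}.
\]
Third, control $\P(A) = \P(|Y| > \delta^{-\alpha})$ by Markov's inequality at the level $p\kappa$:
\[
\P(A) \le \delta^{\alpha p \kappa} \, \E\big[|Y|^{p\kappa}\big] = \delta^{\alpha p \kappa} \, \|Y\|_{L^{p\kappa}(\Omega;\R^d)}^{p\kappa}.
\]
Finally, combine these estimates: the factor $\|1+|Y|^\kappa\|_{L^p} \le 1 + \|Y\|_{L^{p\kappa}}^\kappa$ contributes the square of this quantity, while the $\P(A)^{(p-2)/p}$-factor contributes $\delta^{\alpha(p-2)\kappa}\|Y\|_{L^{p\kappa}}^{(p-2)\kappa}$; absorbing $\|Y\|_{L^{p\kappa}}^{(p-2)\kappa} \le (1+\|Y\|_{L^{p\kappa}}^\kappa)^{p-2}$ into the first factor yields $(1+\|Y\|_{L^{p\kappa}}^\kappa)^p$ after squaring the desired bound, and taking square roots delivers exactly the claimed inequality with $C = 2L$ (times a constant depending on $p$ from the Markov step).

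I do not anticipate any serious obstacle here: the proof is essentially a standard ``localisation plus tail-bound'' argument, and the only real choice is the pairing of H\"older exponents $(p/2, p/(p-2))$ together with the matching Markov exponent $p\kappa$, which is dictated precisely by the target exponent $\tfrac{1}{2}\alpha(p-2)\kappa$ of $\delta$ in the statement. The mild point worth paying attention to is that all appearances of $|Y|^\kappa$ must be handled through the $L^{p\kappa}$-norm (not just $L^p$), since the growth of $\varphi$ is of order $\kappa$ and we raise $(1+|Y|^\kappa)$ to the $p$-th power inside an $L^1$-norm.
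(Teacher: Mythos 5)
Your proposal is correct and takes essentially the same route as the paper's proof: the identical localization to the event $\{|Y| > \delta^{-\alpha}\}$ where $Y \neq Y^\circ$, the polynomial growth bound measured in $L^p$, and the Markov/Chebyshev estimate at level $p\kappa$. The only cosmetic difference is that you apply H\"older's inequality with exponents $\bigl(\tfrac{p}{2},\tfrac{p}{p-2}\bigr)$ where the paper uses a weighted Young inequality with an auxiliary parameter $\nu$ (chosen as $\nu = \alpha(p-2)\kappa$), which leads to the same exponent of $\delta$ and the same constant up to inessential factors.
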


\begin{proof}
  We apply the same idea as in the proof of \cite[Theorem~2.2]{higham2002b}.
  Consider the two measurable sets 
  \begin{align*}
    A_{\delta} := \big\{ \omega \in \Omega \; : \; | Y(\omega)| \le
    \delta^{-\alpha} \big\} \in \F
  \end{align*}
  and $A_{\delta}^c := \Omega \setminus A_{\delta}$. Note that
  $Y(\omega) = Y^{\circ}(\omega)$ for all $\omega \in A_{\delta}$. Thus,
  \begin{align*}
    \big\| \varphi(Y) - \varphi(Y^\circ) \big\|_{L^2(\Omega;\R^d)}^2 & =
    \int_{\Omega}  \big| \varphi(Y(\omega)) -   \varphi( Y^\circ(\omega))
    \big|^2 \one_{A_{\delta}^c}(\omega) \diff{\P(\omega)}. 
  \end{align*}
  For $\nu, \rho, \rho' \in (0,\infty)$ with $\frac{1}{\rho}+ \frac{1}{\rho'} =
  1$ we apply the Young inequality $ab \le \frac{\delta^\nu}{\rho} a^\rho +
  \frac{1}{\rho'} \delta^{- \nu \frac{\rho'}{\rho}} b^{\rho'}$. If we set
  $\rho = \frac{p}{2}$ then we obtain
  \begin{align*}
    &\int_{\Omega}  \big| \varphi( Y(\omega) ) - \varphi(Y^\circ(\omega))
    \big|^2 \one_{A_{\delta}^c}(\omega) \diff{\P(\omega)}\\  
    &\quad \le \frac{2 \delta^\nu}{p} \big\| \varphi( Y) - \varphi( Y^\circ)
    \big\|^{p}_{L^{p}(\Omega;\R^d)} + \big(1 - \frac{2}{p}\big) \delta^{-
    \frac{2 \nu }{p-2}} \P ( A_{\delta}^c).
  \end{align*}
  Now, the polynomial growth condition on $\varphi$ yields
  \begin{align*}
    \| \varphi(Y) - \varphi(Y^\circ) \|_{L^{p}(\Omega;\R^d)} &\le \|
    \varphi(Y)\|_{L^p(\Omega;\R^d)} + \|
    \varphi(Y^\circ)\|_{L^p(\Omega;\R^d)}\\
    &\le 2 L \big( 1 + \| Y \|_{L^{p\kappa}(\Omega;\R^d)}^\kappa \big).
  \end{align*}
  Further, it holds   
  \begin{align*}
    \P( A_{\delta}^c) = \E \big[ \one_{A_{\delta}^c} \big] \le
    \delta^{\alpha p \kappa} \E \big[ \one_{A_{\delta}^c} |Y|^{p\kappa} \big]
    \le \delta^{\alpha p \kappa} \|Y\|^{p\kappa}_{L^{p\kappa}(\Omega;\R^d)}.
  \end{align*}
  To sum up, if we choose $\nu := \alpha (p-2) \kappa$, then we obtain $\alpha p
  \kappa - \frac{2 \nu}{p-2} = \nu$ and, consequently, 
  \begin{align*}
    \big\| \varphi(Y) - \varphi(Y^\circ) \big\|_{L^2(\Omega;\R^d)}^2 &\le
    \frac{2}{p} (2L)^p \delta^{\alpha (p-2)\kappa} \big( 1 +
    \|Y\|^\kappa_{L^{p\kappa}(\Omega;\R^d)} \big)^p\\
    &\quad + \big(1 - \frac{2}{p}\big) \delta^{\alpha (p-2)\kappa}
    \|Y\|^{p\kappa}_{L^{p\kappa}(\Omega;\R^d)}. 
  \end{align*}
  This completes the proof.
\end{proof}

\begin{theorem}
  \label{th:PEMcons}
  Let $f$ and $g^r$, $r = 1,\ldots,m$, satisfy
  Assumption~\ref{as:fg} with $L \in (0,\infty)$ and $q \in (1,\infty)$. Let
  $\overline{h} \in (0,1]$ be arbitrary.
  If the exact solution $X$ to \eqref{sode} satisfies $\sup_{\tau \in [0,T]} \|
  X(\tau) \|_{L^{6q-4}(\Omega;\R^d)} < \infty$, then the projected Euler
  method $(\Psi^{\mathrm{PEM}},\overline{h},X_0)$ with $\alpha =
  \frac{1}{2(q-1)}$ is stochastically 
  B-consistent of order $\gamma = \frac{1}{2}$.  
\end{theorem}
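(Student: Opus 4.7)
The plan is to mirror the proof of Theorem~\ref{th:SSBEcons}, with the projection $x \mapsto x^\circ$ playing the role previously played by the inverse $F_\delta^{-1}$. Inserting \eqref{exact} and \eqref{eq:PsiPEM} produces the decomposition
\begin{align*}
  X(t+\delta) &- \Psi^{\mathrm{PEM}}(X(t),t,\delta) = \bigl(X(t) - X(t)^\circ\bigr) \\
  &+ \int_t^{t+\delta}\bigl[f(\tau,X(\tau)) - f(t,X(t)^\circ)\bigr]\diff{\tau}
  + \sum_{r=1}^m \int_t^{t+\delta}\bigl[g^r(\tau,X(\tau)) - g^r(t,X(t)^\circ)\bigr]\diff{W^r(\tau)}.
\end{align*}
In both integrands I would insert the pivot $f(t,X(t))$ (respectively $g^r(t,X(t))$) so as to separate the part which is governed by the H\"older estimates of Lemmas~\ref{lem:cons1} and~\ref{lem:cons2} from the genuinely new projection contribution $f(t,X(t)) - f(t,X(t)^\circ)$ (and its $g^r$-analogue).

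For the conditional-mean condition \eqref{eq:cons_cond1}, apply $\E[\,\cdot\,|\F_{t}]$; the two It\^o integrals vanish, and the conditional expectation of $\int_t^{t+\delta}[f(\tau,X(\tau)) - f(t,X(t))]\diff{\tau}$ is bounded by $C\delta^{3/2}$ via Lemma~\ref{lem:cons1}. What remains is the $\F_t$-measurable expression $(X(t) - X(t)^\circ) + \delta(f(t,X(t)) - f(t,X(t)^\circ))$. The first summand carries no $\delta$-prefactor and is the decisive term: its $L^2$-norm must itself be $O(\delta^{3/2})$. Lemma~\ref{lem:PEMcons1} applied with $\varphi = \id$, $\kappa = 1$ and $p = 6q-4$ produces exactly the exponent $\tfrac{1}{2}\alpha(p-2)\kappa = \tfrac{3}{2}$, which is where both the choice $\alpha = \tfrac{1}{2(q-1)}$ and the moment hypothesis $\sup_\tau\|X(\tau)\|_{L^{6q-4}(\Omega;\R^d)} < \infty$ enter. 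The remaining term $\delta(f(t,X(t)) - f(t,X(t)^\circ))$ is handled by a second application of Lemma~\ref{lem:PEMcons1}, this time with $\varphi = f(t,\cdot)$ and $\kappa = q$, for which a smaller $p$ already suffices.

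For the orthogonal condition \eqref{eq:cons_cond2}, every $\F_t$-measurable contribution is annihilated by $\id - \E[\,\cdot\,|\F_t]$, so only three pieces survive: the drift integral with integrand $f(\tau,X(\tau)) - f(t,X(t))$ (bounded by $C\delta^{3/2}$ via Lemma~\ref{lem:cons1}), the stochastic integral with integrand $g^r(\tau,X(\tau)) - g^r(t,X(t))$ (bounded by $C\delta$ via Lemma~\ref{lem:cons2}), and the remainder $\sum_r [g^r(t,X(t)) - g^r(t,X(t)^\circ)]\Delta_\delta W^r(t)$. For the last piece the It\^o isometry produces a factor $\delta^{1/2}$, and Lemma~\ref{lem:PEMcons1} with $\varphi = g^r$ and $\kappa = q$ supplies an additional $\delta^{1/2}$ coming from the projection, so the resulting $L^2$-bound is of order $\delta$.

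The main obstacle is the projection error $X(t) - X(t)^\circ$ arising in the conditional mean: the truncation only acts on the rare event $\{|X(t)| > \delta^{-\alpha}\}$, so the challenge is to turn high moments of $X$ into a quantitative $\delta^{3/2}$-rate. This is exactly the trade-off realized in Lemma~\ref{lem:PEMcons1} through Young's inequality with a $\delta$-dependent weight, and matching the resulting exponent $\tfrac{1}{2}\alpha(p-2)\kappa$ to $\tfrac{3}{2}$ for $\kappa = 1$ is what forces $p \geq 6q-4$, pinning down the moment hypothesis stated in the theorem.
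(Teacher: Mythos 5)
Your proposal is correct and follows essentially the same route as the paper's proof: the same decomposition into a drift-H\"older term, a projection remainder, a stochastic-integral H\"older term, and a diffusion projection remainder, with Lemmas~\ref{lem:cons1}, \ref{lem:cons2} and \ref{lem:PEMcons1} applied with exactly the parameter choices the paper uses ($\varphi=\id$, $\kappa=1$, $p=6q-4$ for $X(t)-X^\circ(t)$, and $\varphi=f(t,\cdot)$ resp.\ $g^r(t,\cdot)$, $\kappa=q$, $p=4-\tfrac{2}{q}$ for the coefficient remainders). Your closing observation that matching $\tfrac{1}{2}\alpha(p-2)$ to $\tfrac{3}{2}$ is what pins down the $L^{6q-4}$ moment hypothesis is precisely the mechanism at work in the paper.
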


\begin{proof}
  Let $(t,\delta) \in \mathbb{T}$ be arbitrary. First we insert \eqref{exact}
  and \eqref{eq:PsiPEM} and obtain in the same way as in the proof of
  Theorem~\ref{th:SSBEcons}
  \begin{align*}
    &X(t + \delta) - \Psi^{\mathrm{PEM}} (X(t),t,\delta)
    = \int_{t}^{t + \delta} \big( f(\tau,X(\tau)) - f(t,X(t))
    \big) \diff{\tau}\\
    &\qquad + X(t) + \delta f(t,X(t)) - X^\circ(t) - \delta f(t,X^{\circ}(t))\\
    &\qquad + \sum_{r = 1}^m \int_{t}^{t + \delta} \big( g^r(\tau,X(\tau)) -
    g^r(t,X(t)) \big) \diff{W^r(\tau)}\\
    &\qquad +\sum_{r = 1}^m \big( g^r(t,X(t)) -
    g^r(t,X^{\circ}(t) ) \big) \Delta_\delta W^r(t),
  \end{align*}
  where as before $X^\circ(t) = \min(1, \delta^{-\alpha} |X(t)|^{-1} ) X(t)$.
  In order to show \eqref{eq:cons_cond1} we therefore have to estimate
  \begin{align}
    \label{eq:term10}
    \begin{split}
      &\big\| \E \big[ X(t + \delta) -\Psi^{\mathrm{PEM}} (X(t),t,\delta) |
      \F_{t} \big] \big\|_{L^2(\Omega;\R^d)}\\
      &\quad \le \int_{t}^{t + \delta} \big\| \E \big[ f(\tau,X(\tau)) -
      f(t ,X(t)) | \F_{t} \big] \big\|_{L^2(\Omega;\R^d)}
      \diff{\tau}\\
      &\qquad + \big\|X(t)  - X^\circ(t)\big\|_{L^2(\Omega;\R^d)}
      + \delta \big\| f(t,X(t)) - f(t,X^{\circ}(t)) \big\|_{L^2(\Omega;\R^d)}.
    \end{split}
  \end{align}
  From Lemma~\ref{lem:cons1} and the inequality
  $\| \E[ Y | \F_t ] \|_{L^2(\Omega;\R^d)} \le \| Y
  \|_{L^2(\Omega;\R^d)}$ for all $Y \in L^2(\Omega;\R^d)$ we infer 
   that   
  \begin{align*}
    \int_{t}^{t + \delta} \big\| \E \big[ f(\tau,X(\tau)) - f(t,X(t))
    | \F_{t} \big] \big\|_{L^2(\Omega;\R^d)} \diff{\tau}
    \le C_{\mathrm{cons}} \delta^{\frac{3}{2}}
  \end{align*}
  for a constant $C_{\mathrm{cons}}$ depending on $L$, $q$, $m$, and
  $\sup_{\tau \in [0,T]} \|X(\tau)\|_{L^{4q-2}(\Omega;\R^d)}$.

  For the proof of \eqref{eq:cons_cond1} it therefore remains to verify that
  similar estimates hold true for the second and third term in
  \eqref{eq:term10}. For this we apply Lemma~\ref{lem:PEMcons1} with $\varphi =
  \id_{\R^d}$, $\kappa = 1$, and $p = 6q - 4$. Then we obtain
  \begin{align*}
    \big\|X(t)  - X^\circ(t)\big\|_{L^2(\Omega;\R^d)} \le C \big(1 + \| X(t)
    \|_{L^{6q - 4}(\Omega;\R^d)} \big)^{3q - 2} \delta^{\frac{3}{2}},
  \end{align*}
  since $\frac{1}{2}\alpha(p-2) = \frac{3}{2}$. A further application of
  Lemma~\ref{lem:PEMcons1} with $\varphi = f(t,\cdot)$, $\kappa = q$, and $p =
  4 - \frac{2}{q}$ yields 
  \begin{align*}
    \big\| f(t,X(t)) - f(t,X^{\circ}(t)) \big\|_{L^2(\Omega;\R^d)} \le 
    C \big(1 + \| X(t) \|_{L^{4q - 2}(\Omega;\R^d)}^q \big)^{2 -
    \frac{1}{q}} \delta^{\frac{1}{2}},
  \end{align*}
  since in this case $\frac{1}{2}\alpha(p-2) q = \frac{1}{2}$.
  Altogether, this proves \eqref{eq:cons_cond1} with $\gamma =
  \frac{1}{2}$.
  
  For the proof of \eqref{eq:cons_cond2} we have to estimate the following
  three terms 
  \begin{align}
    \label{eq:term11}
    \begin{split}
      &\big\| \big( \id - \E [ \, \cdot \, | \F_{t} ] \big)
      \big( X(t + \delta) - \Psi^{\mathrm{PEM}}(X(t),t,\delta) \big)
      \big\|_{L^2(\Omega;\R^d)} \\
      &\quad \le \int_{t}^{t + \delta} \big\| \big( \id - \E [ \, \cdot
      \, | \F_{t} ] \big) \big( f(\tau,X(\tau))  - f(t, X(t)) \big) 
      \big\|_{L^2(\Omega;\R^d)} \diff{\tau}\\
      &\qquad + \Big\| \sum_{r = 1}^m \int_{t}^{t + \delta} \big(
      g^r(\tau,X(\tau)) - g^r(t,X(t)) \big) \diff{W^r(\tau)}
      \Big\|_{L^2(\Omega;\R^d)}\\
      &\qquad + \Big\| \sum_{r = 1}^m \big( g^r(t,X(t)) -
      g^r(t,X^\circ(t)) \big) \Delta_\delta W^r(t) \Big\|_{L^2(\Omega;\R^d)}.
    \end{split}
  \end{align}
  First, we use the inequality $\| ( \id - \E[\,\cdot\,  |
  \F_t ] ) Y \|_{L^2(\Omega;\R^d)} \le \| Y \|_{L^2(\Omega;\R^d)}$ for all
  $Y \in L^2(\Omega;\R^d)$ and then we obtain from
   Lemma~\ref{lem:cons1} that
  \begin{align*}
    &\int_{t}^{t + \delta} \big\| \big( \id - \E [ \, \cdot
    \, | \F_{t} ] \big) \big( f(\tau,X(\tau)) - f(t,X(t)) \big)
    \big\|_{L^2(\Omega;\R^d)} \diff{\tau}\le C_{\mathrm{cons}}
    \delta^{\frac{3}{2}}.
  \end{align*}
  Next, we directly apply Lemma~\ref{lem:cons2} to the second term in
  \eqref{eq:term11}. This yields 
  \begin{align*}
    &\Big\| \sum_{r = 1}^m \int_{t}^{t + \delta} \big(
    g^r(\tau,X(\tau)) - g^r(t,X(t)) \big) \diff{W^r(\tau)}
    \Big\|_{L^2(\Omega;\R^d)} \le C_{\mathrm{cons}} \delta.
  \end{align*}
  Regarding the last term in \eqref{eq:term11} we obtain from 
  the It\=o isometry that
  \begin{align*}
    &\Big\| \sum_{r = 1}^m \big( g^r(t,X(t)) -
    g^r(t,X^\circ(t)) \big) \Delta_\delta W^r(t) \Big\|_{L^2(\Omega;\R^d)}^2\\
    &\quad = \delta \sum_{r = 1}^m \big\| g^r(t,X(t)) -
    g^r(t,X^\circ(t)) \big\|^2_{L^2(\Omega;\R^d)}.
  \end{align*}
  Similarly as above the estimate is completed by 
  a further application of Lemma~\ref{lem:PEMcons1} with $\varphi =
  g^r(t,\cdot)$, $\kappa = q$, and $p = 4 - \frac{2}{q}$, which gives
  \begin{align*}
    \big\| g^r(t,X(t)) - g^r(t,X^{\circ}(t)) \big\|_{L^2(\Omega;\R^d)} \le 
    C \big(1 + \| X(t) \|_{L^{4q - 2}(\Omega;\R^d)}^q \big)^{2 -
    \frac{1}{q}} \delta^{\frac{1}{2}}.
  \end{align*}
  Thus, as desired it holds
  \begin{align*}
    &\Big\| \sum_{r = 1}^m \big( g^r(t,X(t)) -
    g^r(t,X^\circ(t)) \big) \Delta_\delta W^r(t) \Big\|_{L^2(\Omega;\R^d)}
    \le  C_{\mathrm{cons}} \delta,
  \end{align*}
  for a constant $C_{\mathrm{cons}}$ depending on $L$, $q$, $m$, and
  $\sup_{\tau \in [0,T]} \| X(\tau) \|_{L^{4q - 2}(\Omega;\R^d)}$.
\end{proof}

We conclude this section by stating the strong convergence result for the PEM
method, which follows directly from Theorems~\ref{th:PEMstab} and
\ref{th:PEMcons} as well as Theorem~\ref{th:Bconv}. 

\begin{theorem}
  \label{th:PEMconv}
  Let $f$ and $g^r$, $r = 1,\ldots,m$, satisfy
  Assumption~\ref{as:fg} with $L \in (0,
  \infty)$, $\eta \in (\frac{1}{2},\infty)$,
  and $q \in (1,\infty)$. Let $\overline{h} \in
  (0,1]$. If the exact solution $X$ to \eqref{sode}
  satisfies $\sup_{\tau \in [0,T]} \| X(\tau) \|_{L^{6q-4}(\Omega;\R^d)} <
  \infty$, then the projected Euler-Maruyama method
  $(\Psi^{\mathrm{PEM}},\overline{h},X_0)$ with $\alpha = \frac{1}{2(q-1)}$ is
  strongly convergent of order $\gamma = \frac{1}{2}$. 
\end{theorem}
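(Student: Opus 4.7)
The plan is to observe that Theorem~\ref{th:PEMconv} is a direct synthesis of three results already established in this paper: the stochastic C-stability of the PEM scheme (Theorem~\ref{th:PEMstab}), its stochastic B-consistency of order $\gamma = \tfrac{1}{2}$ (Theorem~\ref{th:PEMcons}), and the abstract convergence Theorem~\ref{th:Bconv} that turns C-stability plus B-consistency into strong convergence at the consistency order. So there is essentially nothing to prove beyond checking that all hypotheses line up correctly.

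Concretely, I would proceed as follows. First, I would fix the parameters: $L \in (0,\infty)$, $\eta \in (\tfrac{1}{2},\infty)$, $q \in (1,\infty)$ from Assumption~\ref{as:fg}, the upper step size bound $\overline{h} \in (0,1]$, and the projection exponent $\alpha = \tfrac{1}{2(q-1)}$. With these parameters in place and with $\xi = X_0 \in L^2(\Omega,\F_0,\P;\R^d)$ (which is guaranteed since $X_0 \in L^p(\Omega,\F_0,\P;\R^d)$ for sufficiently large $p$), Theorem~\ref{th:PEMstab} applies verbatim and tells us that $(\Psi^{\mathrm{PEM}},\overline{h},X_0)$ is stochastically C-stable with some constant $C_{\mathrm{stab}}$ and the same $\eta > \tfrac{1}{2}$.

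Second, I would verify the moment hypothesis needed for Theorem~\ref{th:PEMcons}, namely $\sup_{\tau \in [0,T]} \| X(\tau) \|_{L^{6q-4}(\Omega;\R^d)} < \infty$; this is assumed directly in the statement of the theorem. Therefore Theorem~\ref{th:PEMcons} applies and yields stochastic B-consistency of $(\Psi^{\mathrm{PEM}},\overline{h},X_0)$ of order $\gamma = \tfrac{1}{2}$, with some constant $C_{\mathrm{cons}}$ depending on $L$, $q$, $m$, $T$, and the stated moment bound of $X$.

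Third, with both C-stability and B-consistency in hand, I would simply invoke Theorem~\ref{th:Bconv} (taking $\xi = X_0$, so the initial error vanishes) to conclude that there exists a constant $C$ depending only on $C_{\mathrm{stab}}$, $C_{\mathrm{cons}}$, $T$, $\overline{h}$, and $\eta$ such that
\begin{align*}
  \max_{n \in \{0,\ldots,N\}} \big\| X(t_n) - X_h^{\mathrm{PEM}}(t_n)
  \big\|_{L^2(\Omega;\R^d)} \le C |h|^{1/2}
\end{align*}
for every vector of step sizes $h \in (0,\overline{h}]^N$. This is precisely the claimed strong convergence of order $\gamma = \tfrac{1}{2}$. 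There is no real obstacle: the only thing to watch for is the consistent choice of $\alpha = \tfrac{1}{2(q-1)}$ across Theorems~\ref{th:PEMstab} and \ref{th:PEMcons}, which is exactly what is imposed in the hypothesis of the present theorem.
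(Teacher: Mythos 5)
Your proposal is correct and follows exactly the paper's own argument: the result is obtained by combining the C-stability of the PEM scheme (Theorem~\ref{th:PEMstab}), its B-consistency of order $\gamma=\frac{1}{2}$ (Theorem~\ref{th:PEMcons}), and the abstract convergence Theorem~\ref{th:Bconv} with $\xi = X_0$. Nothing further is needed.
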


\section{Numerical experiments}
\label{sec:exp}
In this section we perform a series of numerical experiments which aim to
illustrate the strong convergence results of the previous sections. In
particular, we compute estimates of the strong error of convergence for the
numerical discretization of the stochastic Ginzburg-Landau equation
\cite[Chap.~4.4]{kloeden1999} and the $3/2$-stochastic volatility model (see 
e.g. \cite{goard2013, henryL2007} and \cite[Sec.~1]{sabanis2013b}). 

First, we consider the stochastic Ginzburg-Landau equation (GLE) given by 
\begin{align}
  \begin{split}
    \label{eq:Ginz_landau}
    \diff X(t)&= \big(-X^3(t)+(\mu+\frac{1}{2}\sigma^2)X(t) \big)\diff t+\sigma
    X(t)\diff W(t),  \\
    X(0)&=X_0,
  \end{split}
\end{align}
where $\mu$, $\sigma$, $t\geq 0$. 
This equation satisfies
Assumption~\ref{as:fg} and condition \eqref{eq:growthcond} with $q=3$
 since the cubic term in the drift function has a negative sign. 
As already noted in
\cite[Chap.~4.4]{kloeden1999} the exact solution to \eqref{eq:Ginz_landau} 
is
\begin{align}
  \label{eq:GLex} 
  X(t)=X_0\exp(\mu t+\sigma W(t))\big(1+2X^2_0\int_0^t\exp(2\mu  
  s+2\sigma W(s))\diff s\big)^{-\frac{1}{2}},\quad t \ge 0.
\end{align}
Having an explicit expression for the exact solution, explains why
the GLE is often used for numerical experiments
in the literature. For instance, we refer to \cite{wang2012}, where similar
experiments have been conducted for split-step one-leg theta methods. 

\begin{figure}[H]
\includegraphics[width=11cm]{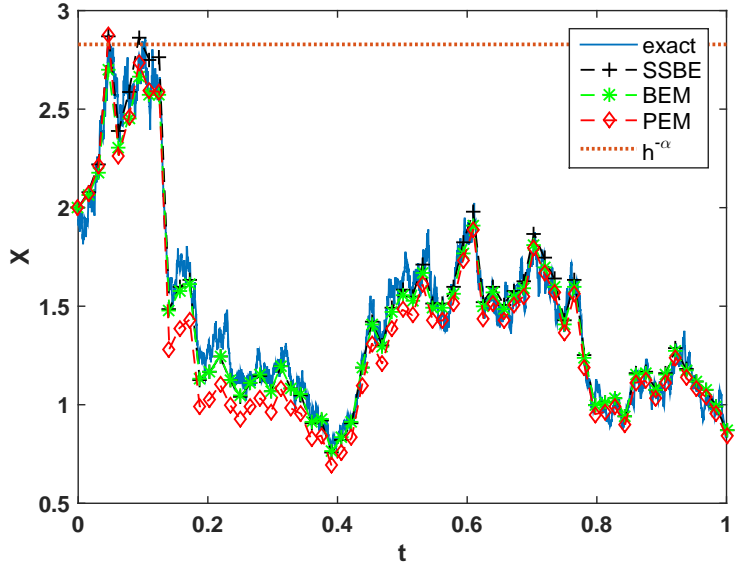}
\caption{Single trajectories of each numerical method with step size $h=2^{-6}$ 
of the stochastic Ginzburg-Landau equation with parameters $\mu=0.5$,
$\sigma=1$  and $X_0=2$. Threshold for the PEM projection is a shown as a
dotted line.} 
\label{fig}
\end{figure}

In our experiments the SODE \eqref{eq:Ginz_landau} is discretized by the
split-step backward Euler method, the backward Euler-Maruyama scheme and the
projected Euler-Maruyama method, respectively. Figure~\ref{fig} shows single
trajectories of the exact solution and the three numerical methods with
equidistant step size $h = 2^{-6}$ and parameter values $\mu=0.5$, $\sigma=1$,
and $X_0=2$. Since Assumption~\ref{as:fg} is satisfied
with growth rate $q = 3$, the parameter value $\alpha= \frac{1}{2(q-1)} =
\frac{1}{4}$ is used for the PEM method.
The implementation of the two implicit schemes SSBE and BEM employs
Cardano's method for directly solving the nonlinear equations.
Further, for the simulation of the exact solution it is necessary to approximate
the deterministic integral appearing in \eqref{eq:GLex}. This is done 
by a Riemann sum with step size $2^{-12}$.

Regarding the PEM method we are particularly interested in such trajectories
which do not coincide with those generated by the standard Euler-Maruyama
method. This event occurs when the scheme leaves the sphere of radius 
$h^{-\alpha}$ at least once and is then drawn back by the projection.
More precisely, if it holds true that
\begin{align}
  \label{eq:proj}
  \big\{ i= 1,\ldots, N \; : \; |X_h^{\mathrm{PEM}}(t_i)| > h^{-\alpha} \big\}
  \neq \emptyset, 
\end{align}
then the PEM method deviates from the standard Euler-Maruyama scheme.
In Figure~\ref{fig} the trajectory 
of the PEM method crosses the line with height $h^{-\alpha} =
2^{\frac{3}{2}}$ in the fourth step. Thereafter, the scheme seemingly
underestimates the exact solution, although
this effect vanishes when time evolves due to the dissipative nature of
equation  \eqref{eq:Ginz_landau}.  

Obviously, this behavior is undesirable and the standard
Euler-Maruyama method would have given a better approximation of the exact
solution in this case. However, let us stress that the main purpose of the
projection in the PEM method is to 
counteract the effect described by Hutzenthaler et al.~\cite{hutzenthaler2011},
where the product of the norm of explosive trajectories by the explicit
Euler-Maruyama method times the probability to observe such explosive
trajectories goes to infinity as the step size goes to zero. Thus, the
Euler-Maruyama values have no bounded moments in the limit $h \to 0$ and,
consequently, it is divergent in the mean square sense. 

On the other hand, the projection in the PEM method essentially prevents the
numerical methods from leaving the ball with radius $h^{-\alpha}$. Due to the
existence of higher moments, this also holds true for the exact solution up to
a set of very small probability (c.f. with the proof of
Lemma~\ref{lem:PEMcons1}). Hence, while being possibly large, the error in such
an instance remains essentially bounded and the line of arguments in
\cite{hutzenthaler2011} leading to the divergence of the standard
Euler-Maruyama method does not apply to the PEM method.  

\begin{figure}[t]
\includegraphics[width=11cm]{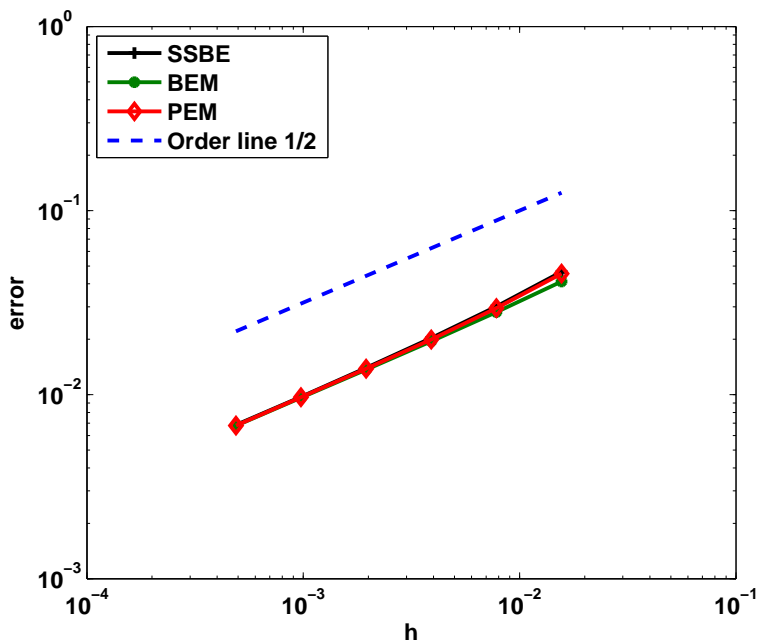}
\caption{Strong convergence errors for the approximation of the stochastic
Ginzburg-Landau equation \eqref{eq:Ginz_landau} with parameters $\mu=0.5$, 
$\sigma=1$, and $X_0=2$ for .}  
\label{fig1}
\end{figure}

\begin{table}
  \caption{Estimated errors and EOCs for the approximations of
  \eqref{eq:Ginz_landau}}
  \label{tab:matprog}
  \begin{tabular}{p{1.1cm}p{1.3cm}p{1.2cm}p{1.3cm}p{1.2cm}p{1.3cm}p{0.8cm}p{1.2cm}}
       &  SSBE  &  &  BEM   & &          PEM  &  
\\   \noalign{\smallskip}\hline\noalign{\smallskip}
  $h$   &   error  &  EOC   &   error   &  EOC  &  error  &  EOC &           \#-Proj.
\\   \noalign{\smallskip}\hline\noalign{\smallskip}
       $2^{-6}$ &      0.04637  &    &      0.04106 &    &       0.04553  &   &    33906\\
     $2^{-7}$ &      0.03013  &          0.62  &      0.02808 &          0.55  &       0.02945  &         0.63 &     2157\\
     $2^{-8}$ &      0.02029  &          0.57  &      0.01951 &          0.53  &       0.02002  &         0.56 &       26\\
     $2^{-9}$ &      0.01396  &          0.54  &      0.01365 &          0.52  &       0.01384  &         0.53 &        0\\
     $2^{-10}$ &      0.00975  &          0.52  &      0.00960 &          0.51  &       0.00968  &         0.52 &        0\\
     $2^{-11}$&      0.00683  &          0.51  &      0.00678 &          0.50  &       0.00681  &         0.51 &        0
  \end{tabular}
\end{table}

Table~\ref{tab:matprog} and
Figure~\ref{fig1} show the estimated strong error of
convergence for six different equidistant step sizes $h= 2^{k - 12}$,
$k=1,\dots,6$. For simplicity we only estimate the error at the final time $T
= 1$, that is  
\begin{align}\label{eq:error_def}
 \text{error}=(\E(|X_h(T)-X(T)|^2))^\frac{1}{2},
\end{align}
where $X_h(T)$ denotes the respective numerical approximations of the exact 
solution $X(T)$. The
expected value is estimated by a Monte Carlo simulation based on $10^6$
sample paths. Our experiments indicate that the Monte Carlo error
then drops well below the strong error to be estimated. As before the 
parameter values are $\mu=0.5$, $\sigma=1$, and $X_0=2$. 

In Figure~\ref{fig1} one clearly observes strong order $\gamma=\frac{1}{2}$ for
all three methods. Further, 
no numerical method has a significant advantage over one of the others.
Table~\ref{tab:matprog} also contains the estimates of the errors
and the corresponding \emph{experimental order of convergence} defined by 
\begin{align*} 
\text{EOC}=\frac{\log(\text{error}(h_i))-\log(\text{error}(h_{i-1}))}{
\log(h_i)-\log(h_ { i-1 } ) }, \ i=2,\dots,k.
\end{align*}
For each method we also computed an average of the experimental order of
convergence by determining the best fitting line in a least-squares sense for
the logarithmically scaled errors. The slopes of these lines are $0.55$,
$0.52$, and $0.54$ for the SSBE, BEM, and PEM method, respectively.

Finally, the last column in Table~\ref{tab:matprog} contains the number of
Monte Carlo samples for which the trajectory of the PEM method leaves the
sphere of radius $h^{-\alpha}$, that is the event described in \eqref{eq:proj}
has occurred. Relating this to the total number of Monte Carlo samples we see
that approximately $3.3$ percent of the PEM trajectories with step size $h=
2^{-6}$ do not coincide with the trajectories of the standard Euler-Maruyama
method. However, as the step size gets smaller the number of those samples
drops quickly. 

Note that we do not know if those excursions from the sphere of radius
$h^{-\alpha}$ are caused by the explosive behavior of the standard
Euler-Maruyama method described in \cite{hutzenthaler2011} or if it is due to
an intrinsic feature of the exact solution. In the latter case the projection
may cause more harm than good. But in both cases a good advice is
to choose a smaller step size if the relative frequency to observe the event
\eqref{eq:proj} is too high.

This becomes even more evident in our next example, which consists of the
following nonlinear SODE
\begin{align}
 \begin{split}
  \label{eq:volatility}
    \diff X(t)&=\lambda X(t)(\mu-|X(t)|)\diff t+\sigma|X(t)|^{\frac{3}{2}}\diff
    W(t),  \\ 
    X(0)&=X_0,
 \end{split}
\end{align}
where $\lambda$, $\mu$, $\sigma$, $X_0 \geq 0$. This equation incorporates a 
super-linearly growing diffusion coefficient function and is used as 
a stochastic volatility model (SVM) in mathematical finance \cite{goard2013}.
It has also been considered in \cite{sabanis2013b} for a tamed Euler method. 

The mappings $f,  \ g\colon\R \rightarrow \R$ defined by $f(x):=\lambda 
x(\mu-|x|)$ and $g(x):=\sigma|x|^\frac{3}{2}$ are continuous for all $x\in\R$ 
and satisfy the global monotonicity condition in Assumption \ref{as:fg} with 
$\eta\leq\frac{\lambda + \sigma^2 }{\sigma^2}$ and $L=\lambda\mu$. Moreover,
the coercivity condition \eqref{eq:growthcond} is fulfilled for every $p \le
\frac{2 \lambda + \sigma^2}{\sigma^2}$.
We refer to the Appendix in \cite{sabanis2013b} for calculations 
of the constants $\eta$, $p$, and $L$.

For the numerical experiments the parameter values are
$\lambda=3.5$, $\mu=3$, $\sigma=1$, and the initial value is $X_0=5$. 
Hence, the global monotonicity condition \eqref{eq3:onesided} is satisfied with
$1 < \eta < 4.5$. Further, the exact solution fulfills $\sup_{t\in [0,T]} \|
X(t) \|_{L^p(\Omega;\R^d)} < \infty$ for every $p \le 8$. 

\begin{figure}[t]
\includegraphics[width=11cm]{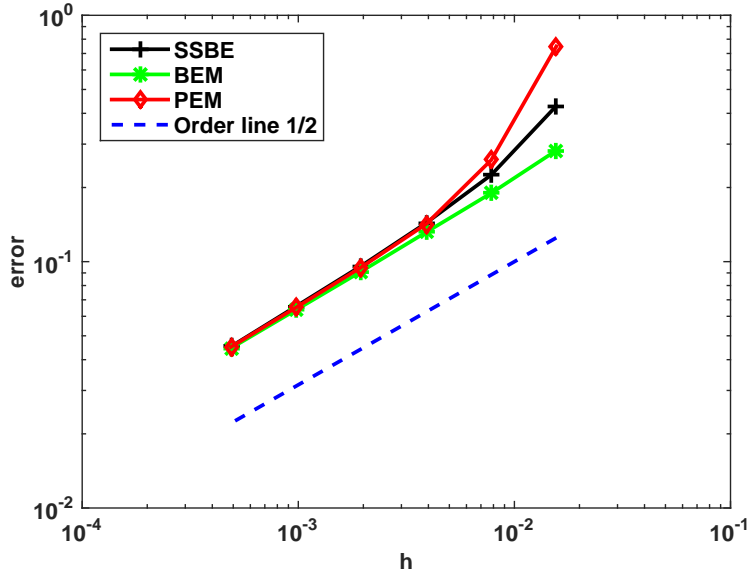}
\caption{Strong convergence errors for the approximation of the
$3/2$-volatility model \eqref{eq:volatility} with parameters $\lambda=3.5,
\mu=3, \sigma=1$ and $X_0=5$.}
\label{fig2}
\end{figure}

\begin{table}
  \caption{Estimated errors and EOCs for the approximation of
  \eqref{eq:volatility}}
  \label{tab:exp}
  \begin{tabular}{p{1.1cm}p{1.3cm}p{1.2cm}p{1.3cm}p{1.2cm}p{1.3cm}p{0.8cm}p{1.2cm}}
       &  SSBE  &  &  BEM   & &          PEM  &  
\\   \noalign{\smallskip}\hline\noalign{\smallskip}
  $h$   &   error  &  EOC   &   error   &  EOC  &  error  &  EOC &  \#-Proj. \\   \noalign{\smallskip}\hline\noalign{\smallskip}
  $2^{-6}$ &      0.42856  &  &   0.28267 &    &       0.74770  & &   139890\\
  $2^{-7}$ &      0.22499  &          0.93  &      0.18973 &          0.58  &       0.25858  &         1.53 &    22338\\
  $2^{-8}$ &      0.14359  &          0.65  &      0.13167 &          0.53  &       0.14208  &         0.86 &     2707\\
  $2^{-9}$ &      0.09603  &          0.58  &      0.09119 &          0.53  &       0.09484  &         0.58 &      294\\
  $2^{-10}$ &      0.06585  &          0.54  &      0.06371 &          0.52  &       0.06516  &         0.54 &       24\\
  $2^{-11}$ &      0.04524  &          0.54  &      0.04454 &          0.52  &
  0.04508  &         0.53 &        0 
  \end{tabular}
\end{table}

Since there is no explicit expression available, we replace the exact solution
in \eqref{eq:error_def} by a numerical reference approximation with a very fine
step size $h_{\mathrm{ref}} = 2^{-16}$. The implicit schemes are again
implemented by solving the nonlinear equation in each time step explicitly.
This time we take the parameter value $\alpha=\frac{1}{2}$ for the PEM method.
As above our estimate of the errors are based on a Monte Carlo simulation with
$10^6$ sample paths. 
 
Figure~\ref{fig2} shows the strong convergence errors of the three methods with
six different step sizes $h=2^{k}\Delta t$, $k=5,\dots,10$. The results
are well in line with the predicted strong order $\gamma=\frac{1}{2}$ for all
schemes provided that the step size is sufficiently small. In that case, there
is again no significant difference in the behavior of the three schemes. For
larger step sizes, however, the BEM methods outperforms the
SSBE scheme and, on a much larger scale, the
PEM method significantly.

This can also been seen from Table~\ref{tab:exp}, which contains
the numerical values for the strong errors shown in Figure~\ref{fig2}. The
values for the corresponding experimental order of convergence verify the
theoretical results only for small values of $h$. As above we also determine an
average experimental order of convergence for the three methods as the slope of
the best fitting line in the mean-square sense. The results for the SSBE, BEM,
and PEM method are $0.63$, $0.53$, and $0.77$, respectively. 

Note that in the stochastic volatility method the magnitude of the noise term
is, intentionally, much larger than in the Ginzburg-Landau equation while the
damping in the drift term is weaker if $X(t) > \mu$. Thus, the dynamic is more
often dominated by the noise term. It appears that the BEM scheme works best in
this situation as the noise term is always damped by the implicit step. In the
SSBE scheme, on the other hand, the most recent noise increment is undamped
which apparently affects the error negatively if the step size is large. 
This effect is even worse for the explicit PEM scheme. In addition, the high
noise intensity makes it more likely for the exact solution to leave the sphere
of radius $h^{-\alpha}$ while the PEM method cannot follow and is pulled back.
This coincides with a larger number of trajectories in which the projection has
been applied as can be seen from the values in the last column of
Table~\ref{tab:exp}. 

To conclude this section, let us summarize our observations: We have seen in
the numerical experiments that the three schemes perform equally well if the
step size is small enough. For larger step sizes the implicit schemes
turned out to be superior over the PEM method, especially if 
the noise term is more likely to dominate the underlying dynamics. However, by
observing the relative frequency of the event \eqref{eq:proj} one may have a
simple indicator available if the step size of the explicit method should be
further decreased. Since the PEM method is, in general, cheaper to
simulate than the implicit schemes, one might afford this, eventually.

\subsection*{Acknowledgement}

The authors wish to thank R.~D.~Grigorieff for calling our attention
to the reference \cite{strehmel2012} and thereby pointing us to the concept of
C-stability. Further, the first two authors gratefully acknowledge financial
support by the DFG-funded CRC 701 'Spectral Structures and Topological Methods
in Mathematics'. The same holds true for the third named author, who has been
supported by the research center \textsc{Matheon}. He also likes to
thank the CRC 701 for making possible a very fruitful research stay at
Bielefeld University, during which essential parts of this work were written.
Finally, the authors wish to thank an anonymous referee and the associated
editor for very helpful suggestions and comments.

\def\cprime{$'$} \def\polhk#1{\setbox0=\hbox{#1}{\ooalign{\hidewidth
  \lower1.5ex\hbox{`}\hidewidth\crcr\unhbox0}}}

\end{document}